\newtheorem{lemma}{Lemma}[section]
\newtheorem{theorem}{Theorem}[section]
\newtheorem{corollary}{Corollary}[section]
\newtheorem{remark}{Remark}[section]
\newtheorem{definition}{Definition}[section]
\newtheorem{example}{Example}[section]
\def\bbC{\mathbb C}
\def\bbR{\mathbb R}
\def\bbO{\mathbb O}
\def\scrG{\mathscr{G}}
\def\scrL{\mathscr{L}}
\def\tr{\mathrm{tr}}
\def\D{\mathbf D}
\newcommand\al[1]{[\![#1]\!]} 
\DeclareMathOperator{\diag}{diag}
\DeclareMathOperator{\mrank}{rank}
\DeclareMathOperator*{\argmax}{arg\,max}
\def\wtd{\widetilde}
\def\rkd{r_{\!{}_{\!D}}} 
\def\cL{{\cal L}}
\def\cR{{\cal R}}
\def\cN{{\cal N}}
\def\cQ{{\cal Q}}
\def\sss{\scriptscriptstyle}
\DeclareMathOperator{\NRes}{NRes}
\DeclareMathOperator{\HH}{H}
\DeclareMathOperator{\T}{T}
\def\rankd{\mbox{rank-preserving}}
\def\dproper{\mbox{$D$-regular}} 	
\def\bbod{\bbO_{\!{}_D}^{n\times k}} 	
\numberwithin{equation}{section}
\numberwithin{figure}{section}
\numberwithin{table}{section}
\numberwithin{algorithm}{section}
\title{
Locally Unitarily Invariantizable NEPv and \\ Convergence Analysis of SCF
}
\author{Ding Lu\thanks{
Department of Mathematics, University of Kentucky,
	Lexington, KY  40506, USA,
	{\tt Ding.Lu@uky.edu}.
    Supported in part by NSF DMS-2110731.}
	~and
	Ren-Cang Li
	\thanks{ Department of Mathematics, University of Texas at Arlington,
Arlington, TX 76019-0408, USA, {\tt rcli@uta.edu}.
   Supported in part by NSF DMS-2009689.}
}
\date{\vspace{-5ex}}
\begin{document}

\maketitle

\begin{abstract}

We consider a class of
eigenvector-dependent nonlinear eigenvalue problems (NEPv)
without the unitary invariance property.
Those NEPv commonly arise as
the first-order optimality conditions of a particular type of optimization problems over the
Stiefel manifold, and
previously, special cases have been studied in the literature.
Two necessary conditions, a definiteness condition and a rank-preserving
condition, on an eigenbasis matrix of the NEPv that is
a global optimizer of the associated  optimization problem are revealed,
where the definiteness condition has been known for the special cases previously investigated.
We show that, locally close to the eigenbasis matrix satisfying both necessary conditions,
the NEPv can be reformulated as a unitarily invariant NEPv, the so-called {\em aligned NEPv},
through a basis alignment operation ---
in other words, the NEPv is locally unitarily invariantizable.
Numerically, the NEPv is naturally solved by an SCF-type iteration.
By exploiting the differentiability of the coefficient matrix of the aligned NEPv,
we establish a closed-form local convergence rate for the SCF-type iteration
and analyze its level-shifted variant.
Numerical experiments confirm our theoretical results.
\end{abstract}

\noindent
{\bf Keywords.}
Optimization on Stiefel manifold,
eigenvector-dependent nonlinear eigenvalue problems,
self-consistent-field iteration,
rate of convergence,
unitary invariance,
polar decomposition.

\smallskip
\noindent
{\bf Mathematics Subject Classification (2010).}
65F15, 
65H17 

%
%

\section{Introduction}

Consider the
{\em eigenvector-dependent nonlinear eigenvalue problem} (NEPv):
Find $X\in\bbR^{n\times k}$ that has orthonormal columns (i.e., $X^{\T}X=I_k$) and a square
$\Lambda\in\bbR^{k\times k}$ satisfying
\begin{equation}\label{eq:nepv}
	H(X)X=X\Lambda,
\end{equation}
where $H(X)\in\bbR^{n\times n}$ is a symmetric matrix continuously
dependent of $X$. When \eqref{eq:nepv} holds, we call $X$ an \emph{eigenbasis matrix}
and $(X,\Lambda)$ an eigen-matrix pair of the NEPv.
Necessarily, $k \leq n$ (usually $k\ll n$), and
the columns of $X$ form an orthonormal basis of the
eigenspace of $H(X)$
evaluated at $X$, 
and the corresponding
eigenvalues are the $k$ eigenvalues of $\Lambda = X^{\T}H(X)X\in\bbR^{k\times k}$,
which is necessarily symmetric.
Note that $\Lambda$ may not be diagonal and
individual columns of $X$ may not be some eigenvectors of $H(X)$.

NEPv~\eqref{eq:nepv} commonly arises in important real-life applications.
The two most prominent
examples are the Kohn-Sham equation in the density functional
theory~\cite{Martin:2004,Szabo:2012}
and the Gross-Pitaevskii equation for the Bose-Einstein
condensation~\cite{Bao:2004,Jarlebring:2014},
both from computational physics and chemistry.
In recent years, NEPv increasingly show up
in the fields of data science and machine learning
where various optimization problems with orthogonality constraints need to be solved;
see, e.g.,~\cite{Bai:2018,Jost:2014,Meyer:1997,Ngo:2012,Tudisco:2019,zhli:2014a,zhli:2014b,zhln:2010,zhln:2013,zhwb:2020}.

NEPv is a term coined by the authors of \cite{Cai:2018}, where it
refers to a problem~\eqref{eq:nepv}
with $H(\,\cdot\,)$ satisfying the so-called \emph{unitary invariance property},
by which we mean\footnote{ %
	The term `unitary' is from the setting of complex matrices,
	and we stick to this name convention in our discussion.
    In fact, the results in this paper can be extended
    to cover the complex case, as we will comment later in~\Cref{sec:conclusion}.
} %
\begin{equation}\label{eq:invariant}
	H(XQ) = H(X)
	\quad\text{for all orthogonal $Q\in\bbR^{k\times k}$}.
\end{equation}
Under condition~\eqref{eq:invariant}, if $X$ solves
NEPv~\eqref{eq:nepv} then so does $XQ$,
and, hence, a solution $X$ is really a representative of
a set of solutions that share the same column space, i.e., a point in the Grassmannian
$\scrG_k^n$ (the collection of all $k$ dimensional subspaces in $\bbR^{n}$).
This unitary invariance property is possessed by
some
practical NEPv, such as those from computational physics and chemistry mentioned above,
and it facilitates theoretical analysis.
Particularly, it allows us to investigate the NEPv and related numerical algorithms
by the eigenspace perturbation theory of the symmetric eigenvalue
problem \cite{li:2014HLA,stsu:1990}.
By exploiting unitary invariance, sufficient conditions for the existence of
solutions of NEPv have been established in~\cite{Cai:2018},
and estimations of the convergence rate for the
often used self-consistent field (SCF)
iteration have been obtained in~\cite{Bai:2022,Liu:2014,Upadhyaya:2021}.
With~\eqref{eq:invariant}, we can also require that the columns of
a solution representative $X$ are eigenvectors of $H(X)$ and
$\Lambda$ is diagonal. Because otherwise
we can diagonalize $\Lambda$ by $Q^{\T}\Lambda Q$ with an orthogonal $Q\in\bbR^{k\times k}$,
and take $(XQ,Q^{\T}\Lambda Q)$ to be the new eigen-matrix pair,
since $H(XQ)XQ=XQ(Q^{\T}\Lambda Q)$ by \eqref{eq:nepv} and \eqref{eq:invariant}.

Although most existing studies of NEPv \eqref{eq:nepv} focus on the ones with
the unitary invariance property \eqref{eq:invariant},
many recent applications also give rise to NEPv without this property.
One important source of such problems is the multi-view subspace learning
(see, e.g., \cite{cugh:2015,sumd:2019,wazb:2020,wazl:2022,zhwb:2020,Zhang:2020}),
where the problems of interest
appear in some forms of the trace-ratio maximization
over the Stiefel manifold
\begin{equation}\label{eq:O(n,k)}
\bbO^{n\times k}:=\{X\in\bbR^{n\times k}\,:\,X^{\T}X=I_k\},
\end{equation}
the collection of  matrices in $\bbR^{n\times k}$ that have orthonormal columns.
For example, in \cite{wazl:2022} it is considered
\begin{equation}\label{eq:max}
	\max_{X\in\bbO^{n\times k}} f_{\theta} (X)
	\quad\text{with}\quad
	f_{\theta}(X):=
	\frac{\tr(X^{\T}AX + X^{\T}D)}{[\tr(X^{\T}BX)]^\theta},
\end{equation}
where $\theta\in[0,1]$ is a tunable parameter, $A,\,B\in\bbR^{n\times n}$ are
symmetric with $B\succ 0$ (positive definite),
and $D\in\bbR^{n\times k}$.
Depending on the coefficient matrices and the parameter $\theta$,
the optimization~\eqref{eq:max} includes a wide variety of practical cases,
and a few special ones 
have been playing important roles in numerical linear algebra, machine learning,
and statistics:
\begin{itemize}
  \item $D=0$ and $\theta=1$ appears in Fisher's linear discriminant analysis (LDA)
\cite{ngbs:2010,zhln:2010,zhln:2013} in the setting of supervised machine learning;
  \item $A=0$ and $\theta=1/2$ shows up in the orthogonal canonical correlation analysis (OCCA) \cite{zhwb:2020};
  \item $B=I_n$ or $\theta=0$ 
	  arises in the unbalanced orthogonal Procrustes problem,
	  a fundamental problem in numerical linear algebra,
  optimization, and applied statistics
  \cite{boli:1987,chtr:2001,geer:1984,edas:1999,elpa:1999,godi:2004,huca:1962,liww:2015,Zhang:2020,zhdu:2006}.
\end{itemize}

The optimization problem~\eqref{eq:max} in its general form has been studied in~\cite{wazl:2022},
where it is shown that the first-order optimality condition of~\eqref{eq:max} is
equivalent to an NEPv~\eqref{eq:nepv} with
\begin{equation}\label{eq:hx0}
	H(X) =
	\frac{1}{[\tr(X^{\T}BX)]^\theta} \bigg[
		2\Big(A - \theta  \frac{\tr(X^{\T}AX)}{\tr(X^{\T}BX)}\cdot B\Big) +
		\Big(DX^{\T} + XD^{\T} -2\theta \frac{\tr(X^{\T}D)}{\tr(X^{\T}BX)} \cdot
	B\Big) \bigg].
\end{equation}
Due to the presences of $DX^{\T}$ and $XD^{\T}$,
this $H(X)$  does not satisfy the unitary invariance property~\eqref{eq:invariant}.
Hence, existing analyses and techniques \cite{Bai:2022,Cai:2018}
developed for NEPv with property \eqref{eq:invariant}
do not directly apply.
Especially, the plain SCF iteration does not work for those problems
and needs a redesign.
In \cite{wazl:2022,zhwb:2020,Zhang:2020},
the authors developed SCF-type iterations to solve their
particular NEPv and established global convergence for the algorithms,
which is rather remarkable since the optimization problems~\eqref{eq:max}
are non-convex where global convergence cannot be guaranteed in general.
However, their analyses and proofs are problem-specific and cannot be
easily extended to other cases,
and they do not lead to useful quantitative estimates for
rates of convergence of their SCF-type iterations.
New convergence theories are needed to better understand
and predict the convergence behaviors of such algorithms.



%
\smallskip\noindent
{\bf Contribution.}
The major goal of this paper is twofold: i) to develop a general theory for analyzing
a class of NEPv \eqref{eq:nepv} that violates the unitary invariance
property~\eqref{eq:invariant} while includes the ones with~\eqref{eq:hx0} as special cases;
ii) to extend the local convergence analysis of \cite{Bai:2022} to such a class of NEPv.
Our analyses will apply to more general NEPv \eqref{eq:nepv} than those 
such as the ones
with~\eqref{eq:hx0}, but for now and for clarity,
let us first summarize our results for the case of NEPv \eqref{eq:nepv} with~\eqref{eq:hx0} as follows.
\begin{itemize}
	\item
		We show that any global optimizer $X_*$ must be a
		\emph{\dproper~eigenbasis matrix},
		i.e., satisfying
		\begin{equation}\label{eq:proper-basis}
		X_*^{\T}D \succeq 0
		\qquad\text{and}\qquad
		\mrank(X_*^{\T}D) = \mrank(D).
		\end{equation}
        The first condition $X_*^{\T}D \succeq 0$ is not hard to establish and has been known
        to \cite{wazl:2022,zhwb:2020,Zhang:2020}, whereas the second condition
        $\mrank(X_*^{\T}D) = \mrank(D)$ is new and is a critical one for
        our local convergence analysis to go through.

	\item
		Our analysis is made possible by a novel transformation of
        the NEPv with \eqref{eq:hx0},
		through a \emph{basis alignment} operation,
		to an equivalent one that does admit
        the unitary invariance property
		and has a differentiable coefficient matrix.
		The resulting NEPv, which we will call the \emph{aligned NEPv\/},
		is well-defined for all $X$ close to a \dproper~eigenbasis matrix.
		Namely,
		locally close to a \dproper~eigenbasis matrix,
		the NEPv with~\eqref{eq:hx0} is
		unitarily invariantizable.

	\item
		We show that the SCF-type iteration for NEPv with \eqref{eq:hx0},
		as developed in \cite{wazl:2022} and others in \cite{zhwb:2020,Zhang:2020},
		is equivalent to the plain SCF iteration for the aligned NEPv.
		By extending the local convergence analysis in~\cite{Bai:2022},
		we establish a sharp estimation of the rate of convergence for the
		SCF-type iteration and build the theoretic foundation
		for a level-shifting scheme to fix the potential divergence issue.

\end{itemize}
Extensive numerical experiments are also provided to demonstrate our theoretical results.

\smallskip\noindent
{\bf Organization.}
The rest of this paper is organized as follows.
In~\Cref{sec:nepv}, we introduce a class of NEPv without unitary invariance
as the KKT condition for an optimization problem over the Stiefel manifold.
In~\Cref{sec:property}, we discuss necessary conditions for an
eigenbasis matrix to be a global maximizer of the optimization problem,
where we will introduce  the notions of basis alignment and \dproper~eigenbasis matrix.
\Cref{sec:scf,sec:local} are devoted to SCF,
where we will propose an SCF-type iteration,
establish its connection to the plain SCF for an aligned NEPv that is unitarily
invariant, and obtain its local convergence rate.
\Cref{sec:ls} is on a level-shifting scheme with a theoretical foundation
for fixing the potential divergence issue of the SCF-type iteration.
Numerical experiments are presented in~\Cref{sec:egs}
and concluding remarks are made in~\Cref{sec:conclusion}.

\smallskip\noindent
{\bf Notation.}
$\bbR^{n\times m}$ is the set of $n$-by-$m$ real matrices, and $\bbO^{n\times k}$ defined in \eqref{eq:O(n,k)} is
the Stiefel manifold,
where $k\le n$ (usually $k\ll n$) and $I_k$ is the $k\times k$ identity matrix.
For a vector or matrix $B\in\bbR^{m\times n}$,
$B^{\T}$ stands for its transpose,
$\cR(B)$ for its 
column space,
and $\cN(B)$ for its null space.
The singular values of $B$ are denoted by
$\sigma_i(B)$, for $i=1,\ldots,\min\{m,n\}$, arranged in the nonincreasing order:
$\sigma_1(B)\ge\sigma_2(B)\ge\cdots\ge\sigma_{\min\{m,n\}}(B)$.
$\|B\|$ denotes some consistent matrix norm of $B$ 
such as the spectral norm and the Frobenius norm.
For a square matrix $A\in\bbR^{n\times n}$, 
$\tr(A)$ and $\rho(A)$ denote, respectively,
its trace and spectral radius
(i.e., the largest absolute value of the eigenvalues of $A$).
If $A$ is also symmetric, then its eigenvalues
are enumerated from largest to smallest as
$\lambda_1(A)\ge\lambda_2(A)\ge\cdots\ge\lambda_n(A)$,
and, in particular, $\lambda_{\max}(A):=\lambda_1(A)$ and $\lambda_{\min}(X):=\lambda_n(A)$.
$A\succeq 0$ ($A\succ 0$) means that $A$ is  symmetric and positive
semidefinite (definite).
Other notations will be explained at their first appearances.


%
\section{A Class of NEPv Without Unitary Invariance Property}\label{sec:nepv}
In this section, we introduce a class of NEPv as the first-order
optimality condition, also known as the KKT condition,
to a particular optimization problem over the Stiefel manifold.

Throughout this paper,
for a scalar function $f$ defined on $\bbO^{n\times k}$,
its gradient at $X=[x_{ij}]$ as a matrix variable in $\bbR^{n\times k}$ is denoted as
and defined by
\begin{equation}\label{eq:f-gradR(nk)}
\frac{\partial f(X)}{\partial X} \in\bbR^{n\times k}
\quad\mbox{with}\quad
\left[ \frac{\partial f(X)}{\partial X} \right]_{ij} := \frac{\partial f(X)}{\partial x_{ij}} .
\end{equation}
More generally, for a (Fr\'echet) differentiable function
$F:\bbR^{n\times k}\to \bbR^{p\times q}$,
its Fr\'echet derivative at $X\in\bbR^{n\times k}$ along direction $Y\in\bbR^{n\times k}$,
denoted as $\D F (X)[Y]$, is defined by
\begin{equation}\label{eq:FD-defn}
\D F(X)[Y] := \lim_{t\to 0} \frac 1t\big[ F(X+tY) - F(X)\big]
     = \left.\frac{d}{dt}
	 F(\,X+tY\,)\right|_{t=0}.
\end{equation}
We can see that $\D F (X)[\,\cdot\,]: \bbR^{m\times n}\to \bbR^{p\times q}$ is a linear
operator.

For the gradient in \eqref{eq:f-gradR(nk)},
we emphasize it being defined at $X$ as a matrix variable in $\bbR^{n\times k}$,
i.e., all entries of $X$ are treated as independent,
although $X$ lives on $\bbO^{n\times k}$.
The reader should not confuse it with the notion
of gradient over the Stiefel manifold $\bbO^{n\times k}$ \cite[(3.37)]{abms:2008}.

For ease of presentation, we formally define the notion of {\em unitary invariance\/} of a
scalar or matrix-valued function on $\bbR^{n\times k}$ as follows.

\begin{definition}\label{dfn:UI}
{\rm
A function $F\colon \bbR^{n\times k}\to \bbR^{p\times q}$
is said {\em right unitarily invariant\/}, or {\em unitarily invariant\/} for short, if
$$
F(XQ)\equiv F(X)\quad\mbox{for $X\in\bbR^{n\times k}$, $Q\in\bbO^{k\times k}$}.
$$
}
\end{definition}

%
\subsection{An Optimization Problem on the Stiefel Manifold}
Consider the following maximization problem over the Stiefel manifold $\bbO^{n\times k}$:
\begin{equation}\label{eq:gopt}
	\max_{X\in\bbO^{n\times k}} f(X)
 	\qquad\text{with}\qquad f(X):=\phi(X) + \psi(X)\cdot \tr(X^{\T}D),
\end{equation}
where $D\in\bbR^{n\times k}$,  $\phi$ and $\psi$ are continuously
differentiable functions in $X\in\bbR^{n\times k}$ and unitarily invariant.
We assume that $\psi$ is a
positive\footnote {This condition may be relaxed to that
        $\psi$ is non-negative, i.e., $\psi(X)\ge 0$, and $\psi(X)> 0$ at
        the KKT points of \eqref{eq:gopt}. But such conditions are hard
        to verify since the KKT points are in general unknown in the first place.}
function, i.e., $\psi(X)> 0$,
for all $X\in\bbO^{n\times k}$.
Problem~\eqref{eq:gopt} takes~\eqref{eq:max} as a special case
with
$$
\phi(X)= \frac {\tr(X^{\T}AX)}{[\tr(X^{\T}BX)]^\theta}
\quad\mbox{and}\quad
\psi(X)= \frac{1}{[\tr(X^{\T}BX)]^{\theta}}.
$$
They are unitarily invariant in $X$ because 
both $\tr(X^{\T}AX)$ and $\tr(X^{\T} BX)$ share this property 
(recall that matrix traces are invariant under similarity transformations and, 
in particular, we have $\tr(Q^{\T} M Q) = \tr(M)$ for all $M\in\bbR^{k\times k}$ and
$Q\in\bbO^{k\times k}$).
If $D=0$ in~\eqref{eq:gopt},
then the objective function $f$ becomes unitarily invariant,
and~\eqref{eq:gopt} is a maximization problem over the Grassmannian $\scrG_k^n$.

We will show that the
first-order optimality condition, also known as the KKT condition,
of the optimization problem~\eqref{eq:gopt}
can be formulated as an NEPv and any optimizer is an orthonormal eigenbasis matrix of the NEPv.
Further necessary conditions for the eigenbasis matrix
to be a global maximizer will be derived in~\Cref{sec:property}.

\subsection{From the First-Order Optimality Condition to NEPv}
Traditionally, problem~\eqref{eq:gopt} is viewed
as an optimization problem with orthogonality constraints:
\begin{equation}\label{eq:goptc}
	\max_{X\in\bbR^{n\times k}} f(X)\quad\text{s.t.}\quad X^{\T}X=I_k.
\end{equation}
We will derive the optimality conditions of~\eqref{eq:goptc} by the standard method
of Lagrange's multipliers.\footnote {Alternatively, we can use the
	Riemannian gradient of a scalar function over
	the Stiefel manifold $\bbO^{n\times k}$ to derive the KKT condition, as
  in \cite{wazl:2022} (see also \cite[(3.37)]{abms:2008}).}
Let us begin with a useful expression for the gradients of
unitarily invariant functions.

\begin{lemma}\label{lem:dfx}
	Let $\phi\colon \bbR^{n\times k} \to \bbR$ be a differentiable function
	that is unitarily invariant.
	Then there exists a matrix-valued function $H_{\phi}(X)\in\bbR^{n\times
	n}$, which is continuous and unitarily invariant in $X\in \bbR^{n\times k}$, 
	and symmetric, such that the gradient
	of $\phi$ admits 
	\begin{equation}\label{eq:dfx}
	\frac {\partial \phi(X)}{\partial X}	
      = H_{\phi}(X)\cdot X
	\end{equation}
	for $X\in\bbR^{n\times k}$ with orthonormal columns.
Furthermore,  $H_{\phi}$ can be made differentiable if $\phi$ is twice differentiable.
In general, such $H_{\phi}(X)$ is not unique, and one of them is
	\begin{equation}\label{eq:hfx}
		H_{\phi}(X) = \frac {\partial \phi(X)}{\partial X}\cdot X^{\T}
           + X\cdot \left[\frac {\partial \phi(X)}{\partial X}\right]^{\T} -
		X\cdot\left(\left[\frac {\partial \phi(X)}{\partial X}\right]^{\T} X
		\right)\cdot X^{\T}.
	\end{equation}
\end{lemma}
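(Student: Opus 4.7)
The plan is to extract two differential consequences of the unitary invariance $\phi(XQ) \equiv \phi(X)$ and then directly verify that the explicit formula~\eqref{eq:hfx} satisfies every claim of the lemma.

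First, I would differentiate the identity $\phi(XQ) = \phi(X)$ with respect to the entries of $X$ for fixed $Q \in \bbO^{k\times k}$. Writing $G(X) := \partial \phi(X)/\partial X$, the chain rule yields the equivariance relation
\begin{equation*}
G(XQ) = G(X)\, Q \qquad \text{for all } X \in \bbR^{n\times k} \text{ and } Q \in \bbO^{k\times k}.
\end{equation*}
Next, specializing to $Q(t) = \exp(tA)$ for an arbitrary skew-symmetric $A \in \bbR^{k\times k}$ and differentiating $\phi(X Q(t)) = \phi(X)$ at $t = 0$ gives $\tr(G(X)^{\T} X A) = 0$ for every such $A$; hence $X^{\T} G(X)$ is symmetric, i.e., $X^{\T} G(X) = G(X)^{\T} X$ for every $X \in \bbR^{n\times k}$.

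With these two identities in hand, I would verify that $H_\phi(X) := G X^{\T} + X G^{\T} - X (G^{\T} X) X^{\T}$ satisfies all the conclusions. Assuming $X^{\T} X = I_k$, the product $H_\phi(X) X$ telescopes to $G + X G^{\T} X - X G^{\T} X = G$, establishing~\eqref{eq:dfx}. For symmetry, transposing each summand reduces $H_\phi - H_\phi^{\T}$ to a difference of the terms $X X^{\T} G X^{\T}$ and $X G^{\T} X X^{\T}$; both are equal thanks to the symmetry of $X^{\T} G$, so $H_\phi = H_\phi^{\T}$. For unitary invariance, substituting $XQ$ for $X$ and invoking $G(XQ) = G(X)Q$ together with $Q Q^{\T} = I_k$, every occurrence of $Q$ cancels, yielding $H_\phi(XQ) = H_\phi(X)$. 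Continuity of $H_\phi$ is immediate from that of $G$, and when $\phi$ is twice differentiable, $G$ is $C^1$, so the polynomial expression for $H_\phi$ in the entries of $G$ and $X$ is automatically differentiable.

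Non-uniqueness is then transparent: adding to $H_\phi$ any continuous, unitarily invariant, symmetric field $M(X)$ with $M(X)X = 0$ on $\bbO^{n\times k}$---for example, $M(X) = c(X)(I_n - X X^{\T})$ with $c$ a continuous invariant scalar---produces another valid choice. I do not anticipate any real obstacle; the conceptual heart of the argument is the symmetry of $X^{\T} G(X)$ extracted via the skew-symmetric direction trick, and without unitary invariance the candidate formula~\eqref{eq:hfx} would not even be symmetric, which is precisely what the downstream NEPv formulation requires.
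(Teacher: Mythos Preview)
Your proposal is correct and follows essentially the same route as the paper: both verify that the explicit formula~\eqref{eq:hfx} works by establishing the equivariance $G(XQ)=G(X)Q$ and the symmetry of $X^{\T}G(X)$, and then checking~\eqref{eq:dfx}, symmetry, and unitary invariance directly. The only noteworthy difference is that you obtain the symmetry of $X^{\T}G(X)$ by differentiating along $Q(t)=\exp(tA)$ with skew $A$, whereas the paper deduces it from a Lagrange-multiplier argument applied to the trivially constant problem $\max_{Q\in\bbO^{k\times k}}\phi(XQ)$; these are two standard phrasings of the same first-order condition, and your version is arguably the more direct of the two.
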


\begin{proof}

	We will show that the matrix in~\eqref{eq:hfx} satisfies all the requirement.
	The validity of the expression \eqref{eq:dfx} with \eqref{eq:hfx} is a
	consequence of the following more general fact: Given $X\in\bbR^{n\times
	k}$ with $X^{\T}X=I_k$, it holds for any matrix $C\in\mathbb R^{n\times k}$ that
	\[
		C = CX^{\T} X
		= (CX^{\T} + XC^{\T} - XC^{\T}) \cdot X
		= (CX^{\T} + XC^{\T} - XC^{\T}XX^{\T}) \cdot X.
	\]
	Letting $C=\frac {\partial \phi(X)}{\partial X}$ yields \eqref{eq:dfx} with \eqref{eq:hfx}.
	It can been seen that $H_{\phi}(\,\cdot\,)$ in \eqref{eq:hfx}
	is continuous and  also differentiable if $\phi$ is twice differentiable.

	It remains to show that $H_{\phi}$ in~\eqref{eq:hfx} is symmetric and unitarily invariant.
	To that end, we will first establish the following identities for the derivative of
	unitarily invariant function  $\phi$: For $X\in\bbR^{n\times k}$,
	$Q\in\bbO^{k\times k}$, and $Y:=XQ$, we have
	\begin{equation}\label{eq:partialphixx}
		\frac{\partial \phi(XQ)}{\partial Q} = X^{\T} \frac{\partial \phi(X)}{\partial X}Q
		\quad \text{and}\quad
    	\frac {\partial \phi(Y)}{\partial Y}=\frac {\partial \phi(X)}{\partial
		X}\cdot Q\,.
	\end{equation}
	These two identities in \eqref{eq:partialphixx} follow directly from
the following first-order expansions: Perturbing $Q$ to $Q+\delta Q$ and $Y\equiv XQ$ to $Y+\delta Y$, we have
\begin{align*}
\phi(X(Q+\delta Q))&=\phi(X(I+\delta Q\cdot Q^{\T})Q) \\
       &=\phi(X+X\cdot\delta Q\cdot Q^{\T}) \\
	   &= \phi(X) +
		\tr\left(\left[\frac{\partial \phi(X)}{\partial X}\right]^{\T}
		X\cdot\delta Q \cdot Q^{\T}\right) + \mathcal O(\|\delta Q\|^2) \\
	   &= \phi(X) +
		\tr\left(Q^{\T}\left[\frac{\partial \phi(X)}{\partial X}\right]^{\T}
		X\cdot\delta Q  \right) + \mathcal O(\|\delta Q\|^2), \\
\intertext{and}
\phi(Y+\delta Y)&=\phi((X+\delta Y\cdot Q^{\T})Q) \\
       &=\phi(X+\delta Y\cdot Q^{\T}) \\
       &=
		\phi(X) + \tr\left(\left[\frac{\partial \phi(X)}{\partial X}\right]^{\T} \cdot\delta Y\cdot Q^{\T}\right)
+ \mathcal O(\|\delta Y\|^2) \\
       &=
		\phi(X) + \tr\left(Q^{\T}\left[\frac{\partial \phi(X)}{\partial X}\right]^{\T} \cdot\delta Y \right)
+ \mathcal O(\|\delta Y\|^2).
\end{align*}

	Next, we claim that $X^{\T}\frac {\partial \phi(X)}{\partial X}$ is always
	symmetric, from which the symmetry of $H_{\phi}(X)$ in \eqref{eq:hfx}
	follows immediately.
	To verify this claim,
	we notice that $\phi(XQ)$ with a fixed $X\in\bbR^{n\times k}$ is a constant over
	$Q\in\bbO^{k\times k}$ by unitary invariance.
	Therefore, any $Q\in\bbO^{k\times k}$ is an optimal solution to
    $$
    \max_{Q\in\bbR^{k\times k}}
	\phi(XQ)
	\quad\text{s.t.}\quad Q^{\T}Q=I_k,
    $$
	because the objective function has a constant value.
	By the method of Lagrange's multipliers, we have
	for any $Q\in\bbO^{k\times k}$, there exists a symmetric multiplier
	$\Gamma\in\bbR^{k\times k}$ such that $\frac {\partial \cL(Q,\Gamma)}{\partial Q}  = 0$, where
	\[
    \cL(Q,\Gamma) := \phi (XQ) -
		\frac{1}{2}\tr\left([Q^{\T}Q - I_k]\cdot \Gamma\right)
	\]
	is the associated Lagrangian function.
	By the first identity in~\eqref{eq:partialphixx}, we obtain immediately
	\[
	\frac {\partial \cL(Q,\Gamma)}{\partial Q}
       = X^{\T} \frac{\partial \phi(X)}{\partial X}Q - Q\Gamma = 0.
	\]
	Letting $Q=I_k$ yields $X^{\T}\cdot\frac {\partial \phi(X)}{\partial X} = \Gamma$, which is
	symmetric, as was to be shown.

	Finally, by the second identity in~\eqref{eq:partialphixx},
	it can be verified that $H_{\phi}(XQ) = H_{\phi}(X)$ for $H_{\phi}(\,\cdot\,)$ given by \eqref{eq:hfx}.
	Namely, $H_{\phi}(\,\cdot\,)$ is unitarily invariant.

	We defer addressing the non-uniqueness of $H_{\phi}(X)$ satisfying
	\eqref{eq:dfx} to Remark~\ref{rmk:dfx} below.
\end{proof}

\begin{remark}\label{rmk:dfx}
{\rm
In general, the choice of continuous and unitarily invariant $H_{\phi}(X)$ to satisfy \eqref{eq:dfx} is not unique.
For a particular functions $\phi$,
one may  select an $H_{\phi}$ that is more convenient and efficient to work with.
For example, for $\phi(X) = h\left(\tr(X^{\T}AX)\right)$, a composition
of a differentiable function $h$ with a quadratic form of a symmetric matrix
$A$, we have
\[
%
	\frac {\partial \phi(X)}{\partial X} = 2h'\left(\tr(X^{\T}AX)\right)\cdot  A X,
\]
where $h'$ is the first-order derivative of $h$.
In this case, we have immediately
\begin{equation}\label{eq:rk4non-uniqueness}
	\frac {\partial \phi(X)}{\partial X} = H_{\phi}(X)\cdot X
	\qquad\text{with}\qquad
	 H_{\phi}(X) = 2h'\left(\tr(X^{\T}AX)\right) \cdot A.
\end{equation}
In comparison, the general formula~\eqref{eq:hfx} leads to
\[
	H_{\phi}(X) = 2 h'\left(\tr(X^{\T}AX)\right)\cdot
	\left[AXX^{\T} + XX^{\T}A - X(X^{\T}AX)X^{\T}  \right],
\]
a much more complicated one than that in \eqref{eq:rk4non-uniqueness}.
}
\end{remark}

By~\Cref{lem:dfx},
we can write the gradients 
of unitarily invariant $\phi$ and $\psi$ as
\begin{equation}\label{eq:der}
	\frac {\partial \phi(X)}{\partial X} = H_{\phi}(X)\cdot X
	\qquad \text{and}\qquad
	\frac {\partial \psi(X)}{\partial X} = H_{\psi}(X)\cdot X,
\end{equation}
where $H_{\phi}(X)\in\bbR^{n\times n}$ and $H_{\psi}(X)\in\bbR^{n\times n}$
are symmetric and
unitarily invariant. In what follows, we assume some forms of
$H_{\phi}(X)$ and $H_{\psi}(X)$
have been selected to fulfill \eqref{eq:der}, but we will not specify
which particular ones are used. In fact, our development will work with any choice.

\begin{theorem}\label{thm:nepv}
$X\in\bbO^{n\times k}$ is a KKT point of~\eqref{eq:gopt} if and only if
$D^{\T}X$ is symmetric
and $X$ satisfies the NEPv
\begin{subequations}\label{eq:nepv2}
\begin{equation}\label{eq:nepv2-1}
	H(X)X = X\Lambda,
\end{equation}
where $H(X)$ is symmetric and given by
\begin{equation}\label{eq:hx22}
	H(X) = H_{\phi}(X) + \tr(X^{\T}D)\cdot H_{\psi}(X) +\psi(X)\cdot
	(DX^{\T}+XD^{\T}).
\end{equation}
\end{subequations}
\end{theorem}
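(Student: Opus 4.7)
The plan is to derive the first-order optimality condition for~\eqref{eq:goptc} via Lagrange multipliers and reconcile it with the NEPv form. I form the Lagrangian $\cL(X,\Lambda)=f(X)-\tfrac12\tr\bigl((X^{\T}X-I_k)\Lambda\bigr)$ with a symmetric multiplier $\Lambda\in\bbR^{k\times k}$, so that the KKT condition reads $\partial f(X)/\partial X=X\Lambda$ with $\Lambda=\Lambda^{\T}$ and $X^{\T}X=I_k$. Using \Cref{lem:dfx} (equivalently, the hypothesis \eqref{eq:der}) together with the product rule and the elementary identity $\partial\tr(X^{\T}D)/\partial X=D$, I compute
\begin{equation*}
\frac{\partial f(X)}{\partial X}
= H_{\phi}(X)X + \tr(X^{\T}D)\cdot H_{\psi}(X)X + \psi(X)\cdot D.
\end{equation*}

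The key algebraic bridge to the claimed $H(X)$ in~\eqref{eq:hx22} is to multiply $D$ on the right by $X^{\T}X=I_k$ and symmetrize: since $X^{\T}X=I_k$, one has $(DX^{\T}+XD^{\T})X = D + X(D^{\T}X)$, so that
\begin{equation*}
H(X)X \;=\; \frac{\partial f(X)}{\partial X} \;+\; \psi(X)\cdot X(D^{\T}X).
\end{equation*}
This correction term $\psi(X)X(D^{\T}X)$ is precisely the reason the symmetry of $D^{\T}X$ enters as a separate condition.

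For the forward implication, suppose $X$ is a KKT point, so $\partial f(X)/\partial X=X\Lambda_0$ with $\Lambda_0=\Lambda_0^{\T}$. Left-multiplying by $X^{\T}$ and using $X^{\T}X=I_k$ gives $\Lambda_0 = X^{\T}H_{\phi}(X)X+\tr(X^{\T}D)X^{\T}H_{\psi}(X)X+\psi(X)X^{\T}D$. The first two terms on the right are symmetric because $H_{\phi}(X),H_{\psi}(X)$ are symmetric by~\Cref{lem:dfx}; hence $\psi(X)X^{\T}D$ must be symmetric, and since $\psi(X)>0$ this forces $D^{\T}X$ to be symmetric. Combining with the bridge identity then yields $H(X)X=X\bigl(\Lambda_0+\psi(X)D^{\T}X\bigr)$, i.e.\ \eqref{eq:nepv2-1} with $\Lambda:=\Lambda_0+\psi(X)D^{\T}X$.

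For the reverse implication, assume $D^{\T}X$ is symmetric and $H(X)X=X\Lambda$. Symmetry of $H_{\phi}$, $H_{\psi}$, and of $DX^{\T}+XD^{\T}$ shows $H(X)$ is symmetric, which verifies the symmetry claim in the theorem; moreover left-multiplying the NEPv by $X^{\T}$ shows $\Lambda=X^{\T}H(X)X$ is symmetric. Subtracting the bridge identity gives $\partial f(X)/\partial X = X\bigl(\Lambda-\psi(X)D^{\T}X\bigr)$, and since $D^{\T}X$ is symmetric the bracket is symmetric, exhibiting $X$ as a KKT point with Lagrange multiplier $\Lambda-\psi(X)D^{\T}X$. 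The only nontrivial step is spotting the correction identity $(DX^{\T}+XD^{\T})X=D+X(D^{\T}X)$ and recognizing that isolating its nonsymmetric piece $\psi(X)X^{\T}D$ is what produces the symmetry hypothesis on $D^{\T}X$; everything else is routine bookkeeping.
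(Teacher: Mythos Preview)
Your proof is correct and follows essentially the same approach as the paper: both set up the Lagrangian with a symmetric multiplier, compute $\partial f/\partial X = H_{\phi}(X)X + \tr(X^{\T}D)H_{\psi}(X)X + \psi(X)D$, and pass between this and $H(X)X$ via the identity $(DX^{\T}+XD^{\T})X = D + X(D^{\T}X)$. The only cosmetic difference is how the symmetry of $D^{\T}X$ is extracted in the forward direction: you left-multiply the KKT equation by $X^{\T}$ and isolate $\psi(X)X^{\T}D$, whereas the paper first writes $H(X)X = X\Lambda$ with $\Lambda = \Gamma + \psi(X)D^{\T}X$ and then observes $\Lambda = X^{\T}H(X)X$ is symmetric, but these are the same computation read in opposite directions.
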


\begin{proof}
We treat~\eqref{eq:gopt} as a constrained optimization problem
subject to $X^{\T}X=I_k$ as in~\eqref{eq:goptc},
for which the Lagrangian function of multipliers is written as
\[
	\cL(X,\Gamma) :=
	\phi(X) + \psi(X)\cdot \tr(X^{\T}D) -  \frac{1}{2}\tr([X^{\T}X-I_k]\Gamma),
\]
where
$\Gamma\in\bbR^{k\times k}$ is a symmetric matrix of multipliers.
Then $X\in\bbO^{n\times k}$ is a KKT point of~\eqref{eq:gopt}
if and only if it satisfies
the first-order optimality condition $\frac{\partial \cL}{\partial X} = 0$ for
some symmetric $\Gamma$, i.e.,
\[
	\frac {\partial \phi(X)}{\partial X}
	+\tr(X^{\T}D)\cdot \frac {\partial \psi(X)}{\partial X}
	+\psi(X)\cdot D
	= X\Gamma,
\]
which, by~\eqref{eq:der}, is equivalent to
\begin{equation}\label{eq:1stopt2}
	H_{\phi}(X)X
	+\tr(X^{\T}D)\cdot
	H_{\psi}(X)X
	+\psi(X)\cdot D
	= X\Gamma.
\end{equation}

If $X\in\bbO^{n\times k}$ is a KKT point of~\eqref{eq:gopt},
then~\eqref{eq:1stopt2} yields \eqref{eq:nepv2}
with $\Lambda = \Gamma + \psi(X)\cdot D^{\T}X$.
Moreover, since $\Lambda\equiv X^{\T}H(X)\,X$ is symmetric, we have
$D^{\T}X=(\Lambda - \Gamma)/\psi(X)$ is also symmetric.

Conversely, if $X\in\bbO^{n\times k}$ satisfies \eqref{eq:nepv2}
and $D^{\T}X$ is symmetric, then we have that \eqref{eq:1stopt2} holds with
a symmetric $\Gamma=\Lambda-\psi(X)\cdot D^{\T}X$.
Hence $X$ is a KKT point.
\end{proof}

For the special case of $D=0$,
the optimization problem~\eqref{eq:gopt} and
NEPv~\eqref{eq:nepv2} become
\begin{equation}\label{eq:maxuni}
	\max_{X\in\bbO^{n\times k}} \phi(X)
	\qquad \text{and}\qquad
	H_{\phi}(X)X=X\Lambda,
\end{equation}
respectively,
where both $\phi$ and $H_{\phi}$ are unitarily invariant.
So, optimizing a unitarily invariant function on the Stiefel manifold
$\bbO^{n\times k}$
always leads to a unitarily invariant NEPv.
For the general case of $D\neq 0$, due to the presences of $DX^{\T}$ and $X^{\T}D$,
the coefficient matrix $H(\,\cdot\,)$ in \eqref{eq:hx22} is not
unitarily invariant any more;
see the particular example~\eqref{eq:hx0},
obtained by an application of~\Cref{thm:nepv} to~\eqref{eq:max}.

As a first-order optimality condition,
\Cref{thm:nepv} does not specify which $k$ eigenvalues of $H(X)$
correspond to those of $\Lambda$ for the purpose of solving the underlying optimization problem.
For a local maximizer $X$ of~\eqref{eq:gopt}, the corresponding eigenvalues are
typically the $k$ largest ones.
This has been proven for particular optimizations in the form of~\eqref{eq:gopt}
(e.g., in~\cite{wazl:2022,zhli:2014a,zhln:2010,zhln:2013,zhwb:2020,Zhang:2020}),
and it is also a common practice for
handling the NEPv from related optimization problems
in electronic structure calculation (e.g., in~\cite{Cances:2010,Yang:2007}), where
the eigenvalues are the $k$ smallest ones because they are about minimization.
But there exist cases for which not all of the $k$ eigenvalues are the extreme ones,
and when that happens, numerical difficulties arise.
We will come back to this issue in~\Cref{sec:ls}.

%

%

%
\section{Necessary Conditions for Global Maximizers}\label{sec:property}
Ideally, we should seek those solutions of NEPv \eqref{eq:nepv2} that
are the global maximizers of  optimization problem \eqref{eq:gopt}.
For that purpose, we will first establish
two necessary conditions for a global maximizer of \eqref{eq:gopt},
beyond its KKT condition given as  NEPv~\eqref{eq:nepv2}.
Specifically, they are
\begin{align}
\mbox{\em definiteness:} &\quad X^{\T}D\succeq 0, \label{cond:ND} \\
\mbox{\em rank-preserving:}
						 &\quad \mrank(X^{\T}D ) = \mrank (D). \label{cond:rank}
\end{align}
The two conditions above will serve as guides to what solutions
to NEPv~\eqref{eq:nepv2} we should look for.
The definiteness condition \eqref{cond:ND}
has been known and successfully exploited
in existing works for a few special cases of~\eqref{eq:gopt};
see, e.g.,~\cite{wazl:2022} and reference therein.
In contrast, the \rankd~condition \eqref{cond:rank} 
is a new discovery,
which is also a crucially missing piece for analyzing the local convergence
of SCF for solving NEPv~\eqref{eq:nepv2}. In fact,
this new condition makes our systematic treatment in the rest of this paper possible.


Our main theorem in this section is \Cref{thm:global} below,
and its proof will be given towards the end of this section after
we fully investigate the definiteness condition \eqref{cond:ND}
and the \rankd\ condition \eqref{cond:rank} separately.

\begin{theorem}~\label{thm:global}
	Let $X_*\in\bbO^{n\times k}$ be a global maximizer of~\eqref{eq:gopt}.
	Then $X_*$ satisfies both the definiteness and
	\rankd~conditions, i.e., \eqref{cond:ND} and \eqref{cond:rank} hold with $X=X_*$.
\end{theorem}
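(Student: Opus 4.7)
The plan is to establish the definiteness and rank-preserving conditions separately. For the definiteness condition \eqref{cond:ND}, I exploit that $X_*Q$ stays on the Stiefel manifold for every $Q\in\bbO^{k\times k}$, together with the unitary invariance of $\phi$ and $\psi$: the inequality $f(X_*Q)\leq f(X_*)$ simplifies, after cancelling $\phi(X_*Q)=\phi(X_*)$ and $\psi(X_*Q)=\psi(X_*)$ and dividing by the positive $\psi(X_*)$, to $\tr(Q^\T X_*^\T D)\leq \tr(X_*^\T D)$ for every orthogonal $Q$. The variational characterization $\max_{Q\in\bbO^{k\times k}}\tr(Q^\T M)=\|M\|_*$ together with the fact that $\tr(M)=\|M\|_*$ if and only if $M$ is symmetric positive semidefinite then forces $X_*^\T D\succeq 0$.

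For the rank-preserving condition \eqref{cond:rank}, non-local information is needed. The key device is the auxiliary function
\[
  g(X) \,:=\, \phi(X) + \psi(X)\,\|X^\T D\|_* \,=\, \max_{Q\in\bbO^{k\times k}} f(XQ),
\]
which is unitarily invariant in $X$, dominates $f$ on $\bbO^{n\times k}$, and coincides with $f$ at $X_*$ by \eqref{cond:ND}; hence $X_*$ is a global maximizer of $g$ as well. I argue by contradiction: if $\mrank(X_*^\T D)<\mrank(D)$, then there exists a unit $v\in\cN(X_*^\T D)\setminus\cN(D)$, so that $w:=Dv\neq 0$ and $w\perp\cR(X_*)$. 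With $\hat w:=w/\|w\|$, consider the Stiefel curve
\[
  X(\theta) \,:=\, X_* + \bigl[X_*v(\cos\theta-1) + \hat w\sin\theta\bigr] v^\T,
\]
which satisfies $X(\theta)^\T X(\theta)=I_k$ for all $\theta$, and $X(\theta)^\T D = X_*^\T D + \sin\theta\cdot v\,\hat w^\T D$ (using $X_*^\T D v=0$).

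The nuclear norm of $X(\theta)^\T D$ admits the lower bound $\|X(\theta)^\T D\|_* \geq \tr(X_*^\T D) + |\sin\theta|\,\|w\|$, obtained by testing $Q=I$ (for $\sin\theta\geq 0$) and $Q=I-2vv^\T$ (for $\sin\theta\leq 0$) in the variational formula for $\|\cdot\|_*$. Expanding $\phi$ and $\psi$ to first order in $\theta$ along the tangent direction $Y=X'(0)=\hat w v^\T$, and using the KKT identity of $f$ along $Y$, namely $\D\phi(X_*)[Y]+\tr(X_*^\T D)\,\D\psi(X_*)[Y]=-\psi(X_*)\|w\|$, one obtains
\[
  g(X(\theta)) \,\geq\, g(X_*) - \theta\,\psi(X_*)\|w\| + \psi(X_*)|\sin\theta|\,\|w\| + O(\theta^2).
\]
For $\theta<0$ sufficiently small, $|\sin\theta|\approx-\theta$, and the two linear contributions combine to $-2\theta\,\psi(X_*)\|w\|>0$, giving $g(X(\theta))>g(X_*)$ --- a contradiction to the global maximality of $X_*$ for $g$. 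Hence \eqref{cond:rank} must hold.

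The principal obstacle I anticipate is the sign analysis: although $\tr(X(\theta)^\T D)$ is smooth and linear in $\theta$, so that $f(X(\theta))$ cannot improve at first order by KKT, the orbit-maximum nuclear norm $\|X(\theta)^\T D\|_*$ acquires a non-smooth $|\sin\theta|$ dependence that is invisible at the level of $f$. This non-smoothness is precisely what supplies a strict one-sided improvement in $g$, and the conceptual step that unlocks everything is the passage from $f$ to the unitarily invariant auxiliary function $g$.
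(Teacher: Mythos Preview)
Your proof is correct. The definiteness argument is essentially the paper's (both reduce to the variational characterization $\max_{Q\in\bbO^{k\times k}}\tr(Q^{\T}M)=\|M\|_*$ behind Lemma~3.1), but your rank-preserving argument takes a genuinely different route.

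The paper proves \eqref{cond:rank} algebraically through the NEPv: every $\widetilde X\in\al{X_*}$ is itself a global maximizer, hence a KKT point, hence satisfies $H(\widetilde X)\widetilde X=\widetilde X\widetilde\Lambda$; combining this with Corollary~\ref{cor:al(X)-characterization}\eqref{cor:al:ie} yields $H(X_a)X_*=H(X_b)X_*$ for all $X_a,X_b\in\al{X_*}$, and Theorem~\ref{thm:rank-consistency} (e)$\Leftrightarrow$(a) then delivers $\mrank(X_*^{\T}D)=\mrank(D)$. Your route is geometric and self-contained: you pass to the orbit-maximum $g(X)=\phi(X)+\psi(X)\,\|X^{\T}D\|_*$ (which the paper introduces only later, in Appendix~B), note that $X_*$ maximizes $g$ as well, and build an explicit Stiefel curve along which the nuclear-norm kink at a rank-deficient point yields a one-sided first-order increase in $g$. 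Your argument bypasses Theorem~\ref{thm:rank-consistency} and the explicit form of $H$ entirely, needing only the first-order KKT of $f$; the paper's argument, by contrast, ties the rank condition directly into the NEPv/alignment machinery that drives the rest of the analysis. Two minor remarks: under the paper's $C^1$ hypothesis on $\phi,\psi$ your $O(\theta^2)$ remainders are strictly only $o(\theta)$, but the dominant contribution $-2\theta\,\psi(X_*)\|w\|$ still controls this and the conclusion stands; and your formula for $X(\theta)^{\T}D$ tacitly uses the \emph{symmetry} of $X_*^{\T}D$ (so that $v^{\T}X_*^{\T}D=0$ follows from $X_*^{\T}Dv=0$), which is legitimate since definiteness has already been established.
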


\subsection{Definiteness and Basis Alignment}\label{sec:basis}
Let $X\in\bbO^{n\times k}$ be a given approximate solution
to  optimization problem~\eqref{eq:gopt}.
Since the objective function $f(X)$ is not unitarily invariant,
it is possible to find a better approximate solution
(in the sense of a larger objective value) in the form of $XQ$
with $Q\in\bbO^{k\times k}$ determined by
\begin{equation}\label{eq:restrict}
	\max_{Q\in\bbO^{k\times k}} f(X Q).
\end{equation}
As we will show shortly,
the construction of such best $XQ$
has a lot to do with the definiteness condition \eqref{cond:ND}.
We will refer to finding the best $XQ$ via \eqref{eq:restrict} as the problem of
{\em basis alignment\/} because essentially $Q$ picks up a new
orthonormal basis matrix $\widetilde X=XQ$ for the column space of $X$.
In fact, any maximizer $Q$ of \eqref{eq:restrict} will yield an improved  solution
$\widetilde X$
for optimization problem~\eqref{eq:gopt},
in that $f(\widetilde X)\geq f(X)$, and usually the inequality is strict unless
$Q=I_k$ is a maximizer of \eqref{eq:restrict}. This idea can be regarded as seeking a
best solution to \eqref{eq:gopt} in the orbit $\{XQ\,:\,Q\in\bbO^{k\times k}\}$.
For convenience of discussion,
we introduce an \emph{alignment function} for $X\in\bbO^{n\times k}$ as
\begin{equation}\label{eq:align0}
		\al{X} : = \left\{ X Q \,\colon\, Q \in  \argmax_{Q\in\bbO^{k\times k}} f(X Q)
		\right\}.
\end{equation}
Note that $\al{X}$ may be multi-valued.


%
From the definition of $f$ in~\eqref{eq:gopt},
since both $\phi$ and $\psi$ are unitarily invariant,
we have
\begin{equation}\label{eq:fxoq}
	f(X Q) =
 \phi(X) + \psi(X)\cdot \tr(Q^{\T}X^{\T}D).
\end{equation}
By assumption $\psi(X)> 0$, problem \eqref{eq:restrict} is therefore equivalent to
\begin{equation}\label{eq:maxfxs2}
 \max_{Q\in\bbO^{k\times k}} \tr\left(Q^{\T}(X^{\T}D)\right).
\end{equation}
Problem \eqref{eq:maxfxs2} is a classical matrix optimization problem,
which also arises in, e.g.,
the orthogonal Procrustes~\cite[Section~6.4.1]{Golub:2013},
and it is known to have a closed-form solution via SVD, as shown in~\Cref{lem:basis},
but the latter
reveals more structural properties in solution than those traditionally known.
We notice that the ideas for the alignment~\eqref{eq:align0}
and its solution via~\eqref{eq:maxfxs2} have occurred
in previous works on special cases of~\eqref{eq:restrict};
see, e.g.,~\cite{wazl:2022,zhwb:2020,Zhang:2020}.
Recently, the authors of \cite{teli:2023a} viewed \eqref{eq:maxfxs2} as an optimization problem
over the orthonormal basis matrices of the given subspace $\cR(X)$ and investigated how the optimal
basis matrices vary as the subspace varies.

\Cref{lem:basis} below is essentially \cite[Lemma 3.2]{wazl:2022} but stated differently,
and a  proof is provided here for self-containedness
and it is also simpler than the one in~\cite{wazl:2022}.

\begin{lemma}[\cite{wazl:2022}]\label{lem:basis}
Let the singular value decomposition (SVD) of $X^{\T}D\in\bbR^{k\times k}$ be
\begin{equation}\label{eq:svdxtd}
	X^{\T}D = U\Sigma V^{\T}
     \equiv\kbordermatrix{ &\sss\ell &\sss k-\ell\\
                           &U_1 & U_2}
          \times\kbordermatrix{ &\sss\ell &\sss k-\ell\\
                          \sss\ell & \Sigma_1 & \\
                          \sss k-\ell & & 0}
          \times\kbordermatrix{ &\\
                          \sss\ell & V_1^{\T} \\
                          \sss k-\ell & V_2^{\T}},
\end{equation}
where
$\ell=\mrank(X^{\T}D)$ and $\Sigma_1=\diag(\sigma_1,\sigma_2,\ldots,\sigma_{\ell})\succ 0$.
Then,
\begin{enumerate}[{\rm (a)}]
	\item\label{lem:basis:ia}
	$Q\in\bbO^{k\times k}$ is a global maximizer of~\eqref{eq:restrict}
	if and only if
	\begin{equation}\label{eq:qform}
		Q = U \begin{bmatrix} I_\ell & \\ & \Omega \end{bmatrix} V^{\T},
	\end{equation}
	where $\Omega\in\bbO^{(n-\ell)\times (n-\ell)}$ is arbitrary;

	\item\label{lem:basis:ib}
	$Q\in\bbO^{k\times k}$ takes the form of \eqref{eq:qform} if and only if
	$Q^{\T}(X^{\T}D) \succeq 0$.
\end{enumerate}
\end{lemma}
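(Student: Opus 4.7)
The plan is to reduce the trace maximization \eqref{eq:maxfxs2} to a canonical form by substituting the SVD \eqref{eq:svdxtd} and introducing the orthogonal change of variable $W := V^{\T} Q^{\T} U \in \bbO^{k\times k}$, so that
\begin{equation*}
	\tr\!\left(Q^{\T} X^{\T} D\right) \;=\; \tr(W\Sigma) \;=\; \sum_{i=1}^{\ell} \sigma_i\, W_{ii},
\end{equation*}
since only the leading $\ell\times\ell$ block of $\Sigma$ is nonzero. Because $W$ has orthonormal columns, each diagonal entry satisfies $|W_{ii}|\le 1$, and $\sigma_i>0$ for $i\le\ell$. Hence the maximum value of the objective is $\tr(\Sigma_1)$, and it is attained precisely when $W_{ii}=1$ for $i=1,\ldots,\ell$. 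Orthogonality of $W$ then forces the first $\ell$ rows and columns of $W$ to coincide with those of the $k\times k$ identity, so $W=\diag(I_\ell,\Omega)$ with $\Omega\in\bbO^{(k-\ell)\times(k-\ell)}$ otherwise arbitrary. Unwinding the substitution will then give the claimed form \eqref{eq:qform} and settle part~(a).

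For part~(b), the ``only if'' direction is an immediate check: for $Q=U\diag(I_\ell,\Omega)V^{\T}$, a direct computation gives $Q^{\T} X^{\T} D = V\diag(\Sigma_1,0)V^{\T}\succeq 0$, independent of $\Omega$. For the converse, I would again set $W=V^{\T}Q^{\T}U$, so that $Q^{\T} X^{\T} D\succeq 0$ is equivalent to $V(W\Sigma)V^{\T}\succeq 0$, and, by conjugation with the orthogonal $V$, equivalent in turn to $W\Sigma$ being symmetric and positive semidefinite. Writing $W$ in $(\ell,k-\ell)$ block form consistent with $\Sigma=\diag(\Sigma_1,0)$, the symmetry condition $W\Sigma=\Sigma W^{\T}$ together with the invertibility of $\Sigma_1$ will force the two off-diagonal blocks of $W$ to vanish, and orthogonality of $W$ then yields $W=\diag(W_1,\Omega)$ with $W_1\in\bbO^{\ell\times\ell}$ and $\Omega\in\bbO^{(k-\ell)\times(k-\ell)}$. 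The final step is to pin down $W_1=I_\ell$ from $W_1\Sigma_1\succeq 0$ and $W_1^{\T}W_1=I_\ell$: squaring gives $(W_1\Sigma_1)^2=\Sigma_1 W_1^{\T}W_1\Sigma_1=\Sigma_1^2$, and uniqueness of the PSD square root (applied to $\Sigma_1^2$) together with $W_1\Sigma_1\succeq 0$ then forces $W_1\Sigma_1=\Sigma_1$, i.e., $W_1=I_\ell$.

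The step I expect to require the most care is the rank-deficient case $\ell<k$. When $X^{\T}D$ is singular, the trailing block $\Omega$ is genuinely unconstrained, so the maximizer $Q$ is not unique, and in part~(b) one must resist the temptation to invoke uniqueness of the SVD of $X^{\T}D$; the correct tool is the uniqueness of the polar decomposition applied only to the positive-definite block $\Sigma_1$. Keeping the block bookkeeping straight — and checking that the ``off-diagonal blocks of $W$ vanish'' argument uses only invertibility of $\Sigma_1$ rather than of $\Sigma$ — is the main place where the proof could quietly go wrong.
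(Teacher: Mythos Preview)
Your proof of part~(a) is essentially identical to the paper's: both substitute the SVD and reduce to maximizing $\sum_{i\le\ell}\sigma_i\,\widetilde Q_{ii}$ over an orthogonal $\widetilde Q$ (your $W$ is the paper's $\widetilde Q^{\T}$), then use $|\widetilde Q_{ii}|\le 1$ and orthogonality to force $\widetilde Q=\diag(I_\ell,\Omega)$. Your forward direction of~(b) is also the same.

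Your converse in~(b) is correct but takes a different route from the paper. You argue directly: conjugate to reduce $Q^{\T}X^{\T}D\succeq 0$ to $W\Sigma\succeq 0$, use symmetry of $W\Sigma$ and invertibility of $\Sigma_1$ to kill the off-diagonal blocks of $W$, and finish by invoking uniqueness of the PSD square root on $(W_1\Sigma_1)^2=\Sigma_1 W_1^{\T}W_1\Sigma_1=\Sigma_1^2$. The paper instead gives a one-line trace argument: if $Q^{\T}(X^{\T}D)\succeq 0$ then its eigenvalues coincide with its singular values, so $\tr(Q^{\T}X^{\T}D)=\sum_i\sigma_i(Q^{\T}X^{\T}D)=\sum_i\sigma_i(X^{\T}D)$, which is the maximum value from~\eqref{eq:trmax}; hence $Q$ is a maximizer and part~(a) applies. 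The paper's approach is shorter and elegantly recycles part~(a); your approach is more self-contained and makes the block structure explicit, which is arguably more in the spirit of your own warning about the rank-deficient case.
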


\begin{proof}
For item~\eqref{lem:basis:ia}, it follows from SVD~\eqref{eq:svdxtd} that
\begin{equation}\label{eq:trmax}
\tr\left(Q^{\T}[X^{\T}D]\right) = \tr\left( \widetilde Q^{\T}  \begin{bmatrix}
		\Sigma_1 & \\ & 0
	\end{bmatrix}\right)
	= \sum_{i=1}^\ell \sigma_i \cdot \widetilde Q_{ii}
	\leq \sum_{i=1}^\ell \sigma_i,
\end{equation}
where $\widetilde Q = U^{\T}QV$ is orthogonal and $ \widetilde Q_{ii}$ is the $i$th diagonal entry of
$\widetilde Q$.
The last equation in \eqref{eq:trmax} is due
to $|\widetilde Q_{ii}|\leq 1$ for all $i$.
Clearly, the right hand side achieves the maximum
$\sum_{i=1}^\ell \sigma_i$
if and only if $\widetilde Q_{ii}=1$ for $1\leq i\leq \ell$.
Therefore, the optimal $\widetilde Q = \begin{bmatrix} I_\ell & \\ & \Omega \end{bmatrix}$,
where $\Omega\in\bbO^{(n-\ell)\times (n-\ell)}$.
Substituting it back to $Q = U \widetilde Q V^{\T}$, we obtain~\eqref{eq:qform}.

For item~\eqref{lem:basis:ib}, it can be verified that $Q^{\T}(X^{\T}D)\succeq 0$ for
$Q$ given by~\eqref{eq:qform}.
Conversely, if  $Q\in\bbO^{k\times k}$ satisfies $Q^{\T}(X^{\T}D)\succeq 0$, then
\[
\tr\left(Q^{\T}(X^{\T}D)\right)
= \sum_{i=1}^\ell \sigma_i \left(Q^{\T}(X^{\T}D)\right)
= \sum_{i=1}^\ell \sigma_i \left(X^{\T}D\right)
= \sum_{i=1}^\ell \sigma_i,
\]
where the first equality is because of the eigenvalues of
$Q^{\T}(X^{\T}D)$ are the same  as its singular values.
By~\eqref{eq:trmax}, $Q$ is a global maximizer of~\eqref{eq:maxfxs2}
and must take the form of~\eqref{eq:qform} by item~\eqref{lem:basis:ia}.
\end{proof}

The results in~\Cref{lem:basis}
can alternatively be interpreted
using the polar decomposition of $X^{\T}D$,
as done traditionally for the solution of the Procrustes problem.
The polar decomposition of $X^{\T}D$ refers to
\begin{equation}\label{eq:polar}
	X^{\T}D = Q M \quad \text{with $Q\in\bbO^{k\times k}$ and $M\succeq 0$.}
\end{equation}
The polar factors $Q$ and $M$ in~\eqref{eq:polar}
can be expressed as $Q$ in~\eqref{eq:qform}
and $M=V\Sigma V^{\T}$, via SVD \eqref{eq:svdxtd}.
If $X^{\T}D$ is non-singular,
then the polar decomposition~\eqref{eq:polar} is unique,
and thus the orthogonal polar factor $Q=UV^{\T}$ is also uniquely defined
(i.e., independent of any inherent freedom in SVD~\eqref{eq:svdxtd});
see, e.g., ~\cite[p.220]{begr:2003}, \cite[Chapter~8]{Higham:2008},
and also \cite{li:1993b,li:1995,li:2014HLA,lisu:2002}.
Otherwise, when $X^{\T}D$ is singular,
orthogonal polar factor $Q$ is 
non-unique,
and \Cref{lem:basis} reveals the inherent structure of optimal
$Q$ with the form of~\eqref{eq:qform}. The structure of this solution form 
is crucial in our subsequent analysis.

By~\Cref{lem:basis}, any orthogonal polar factor $Q$ in~\eqref{eq:polar}
is a maximizer of~\eqref{eq:maxfxs2}, and vice versa.
Moreover,
\begin{equation}\label{eq:maxfxs3}
 \max_{Q\in\bbO^{k\times k}} \tr\left(Q^{\T}(X^{\T}D)\right) =
 \tr\left(M\right).
\end{equation}
%
%
The following results are direct consequences of~\Cref{lem:basis}.

\begin{corollary}\label{cor:al(X)-characterization}
Given $X\in\bbO^{n\times k}$, let the SVD of $X^{\T}D$ be given by \eqref{eq:svdxtd}
with $\ell=\mrank(X^{\T}D)$, and define $\al{X}$ as in \eqref{eq:align0}.
\begin{enumerate}[{\rm (a)}]
  \item \label{cor:al:ia}
	  We have
        \begin{subequations}\label{eq:al(X)-characterization}
        \begin{align}
        \al{X} &= \left\{ X Q \,\colon\, Q \in  \argmax_{Q\in\bbO^{k\times k}}
                \tr\left(Q^{\T}(X^{\T}D)\right) \right\} \label{eq:al(X)-characterization-1}\\
               &=\left\{
		X Q
		\,\colon\,
		Q =  U \begin{bmatrix} I_\ell & \\ & \Omega \end{bmatrix} V^{\T},\
		\Omega\in\bbO^{(k-\ell)\times (k-\ell)}
		\right\}. \label{eq:al(X)-characterization-2}
        \end{align}
        \end{subequations}
		Thus, $ \al{X} =  \{X\,(UV^{\T})\}$ contains just one
		element if $\ell = k$, but if $\ell < k$ then $\al{X}$ contains infinitely many elements.
  \item \label{cor:al:ib}
	  $\widetilde X \in\al{X}$
		if and only if $\widetilde X = XQ$ for some $Q\in\bbO^{k\times k}$ such that
		$\widetilde X^{\T} D \succeq 0$.
  \item \label{cor:al:ic}
	  $X \in\al{X}$ if and only if $X^{\T} D \succeq 0$.
  \item \label{cor:al:id}
	  $\al{X} = \al{XQ}$ for any $Q\in\bbO^{k\times k}$.
  \item \label{cor:al:ie}
	  $D^{\T} X_a =  D^{\T} X_b$ and
        $X_a D^{\T}X =  X_b D^{\T}X$ for any two elements $X_a,X_b\in \al{X}$.
\end{enumerate}
\end{corollary}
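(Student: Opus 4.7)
The five items of the corollary are essentially bookkeeping consequences of the SVD-based characterization in \Cref{lem:basis} combined with the unitary invariance of $\phi$ and $\psi$. My plan is to dispose of (a)--(c) first as near-immediate consequences of \Cref{lem:basis}, and then deduce (d) and (e) by direct manipulation using the block form \eqref{eq:qform}.

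For (a), the equivalence between the definition of $\al{X}$ in \eqref{eq:align0} and the characterization in \eqref{eq:al(X)-characterization-1} is simply the identity $f(XQ) = \phi(X) + \psi(X)\cdot \tr(Q^{\T}X^{\T}D)$ from \eqref{eq:fxoq}, together with $\psi(X)>0$: both $\phi(X)$ and $\psi(X)$ are constants with respect to $Q$, so maximizing $f(XQ)$ over $Q\in\bbO^{k\times k}$ reduces to maximizing $\tr(Q^{\T}X^{\T}D)$. The characterization \eqref{eq:al(X)-characterization-2} is then precisely \Cref{lem:basis}\eqref{lem:basis:ia} applied to $X^{\T}D$. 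The cardinality statement follows by inspecting the size of the free block $\Omega\in\bbO^{(k-\ell)\times(k-\ell)}$: when $\ell = k$ this block is vacuous and $Q = UV^{\T}$ is unique, while when $\ell < k$ the group $\bbO^{(k-\ell)\times(k-\ell)}$ is infinite. Item (b) then follows by combining (a) with \Cref{lem:basis}\eqref{lem:basis:ib}, noting that the condition $Q^{\T}(X^{\T}D)\succeq 0$ reads $(XQ)^{\T}D\succeq 0$, i.e., $\widetilde X^{\T}D\succeq 0$ with $\widetilde X = XQ$. Item (c) is the special case of (b) taking $\widetilde X = X$, hence $Q = I_k$.

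For (d), I would argue directly from \eqref{eq:al(X)-characterization-1}: for any fixed $Q\in\bbO^{k\times k}$, as $\widetilde Q$ ranges over $\bbO^{k\times k}$ so does $Q\widetilde Q$, and $(XQ)\widetilde Q = X(Q\widetilde Q)$. Therefore the set of maximizers $\{XQ\widetilde Q : \widetilde Q \in \argmax_{\widetilde Q} f(XQ\widetilde Q)\}$ coincides with $\{X\widehat Q : \widehat Q \in \argmax_{\widehat Q} f(X\widehat Q)\}$, i.e., $\al{XQ} = \al{X}$.

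The main computational step, and the place where the SVD structure really gets used, is (e). Writing any $X_a,X_b \in \al{X}$ as $X_a = XQ_a$, $X_b = XQ_b$ with $Q_a,Q_b$ of the form \eqref{eq:qform}, one computes
\[
D^{\T}X_a = (X^{\T}D)^{\T}Q_a = V\Sigma U^{\T}\cdot U\begin{bmatrix}I_\ell & \\ & \Omega_a\end{bmatrix}V^{\T} = V\begin{bmatrix}\Sigma_1 & \\ & 0\end{bmatrix}V^{\T},
\]
and the final expression is manifestly independent of $\Omega_a$, so $D^{\T}X_a = D^{\T}X_b$. For $X_a D^{\T}X = X_b D^{\T}X$, the same block computation shows that $(Q_a - Q_b)D^{\T}X = U\,\mathrm{diag}(0,\,\Omega_a-\Omega_b)\,\Sigma\,U^{\T} = 0$, since the free block $\Omega_a-\Omega_b$ multiplies the zero block of $\Sigma$; left-multiplying by $X$ yields the second equality. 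No step looks like a genuine obstacle; the one place to be careful is the $\ell = k$ boundary case in (a), where the free block is empty and the SVD $X^{\T}D = U\Sigma V^{\T}$ has $\Sigma = \Sigma_1\succ 0$, forcing $Q = UV^{\T}$ to be the unique orthogonal polar factor.
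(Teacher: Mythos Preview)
Your proposal is correct and follows essentially the same route as the paper: items (a)--(c) are deduced directly from \Cref{lem:basis} together with the reduction \eqref{eq:fxoq}, item (d) is the observation that the orbit $\{XQ\widetilde Q:\widetilde Q\in\bbO^{k\times k}\}$ equals $\{X\widehat Q:\widehat Q\in\bbO^{k\times k}\}$, and item (e) is verified by direct block computation using the form \eqref{eq:qform}. The paper's own proof is terser but proceeds along exactly these lines; your version simply spells out the block calculations for (e) that the paper leaves implicit.
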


\begin{proof}
For item~\eqref{cor:al:ia},
equation~\eqref{eq:al(X)-characterization-1} follows from the equivalency
between optimization problems~\eqref{eq:restrict} and
\eqref{eq:maxfxs2},
while equation~\eqref{eq:al(X)-characterization-2} just restates
\Cref{lem:basis}(a).
That $\al{X}$ has just one element when $\ell=k$ is a consequence of
the uniqueness of the corresponding polar decomposition~\eqref{eq:maxuni} \cite{li:2014HLA}.
Item~\eqref{cor:al:ib} restates
\Cref{lem:basis}\eqref{lem:basis:ib},
which then leads to item~\eqref{cor:al:ic}.
For item~\eqref{cor:al:id}, we notice that $\al{X}$ consists of all orthonormal
basis matrices of the subspace $\cR(X)$, at which $f$ achieves its maximum
in the orbit $\{XQ\,:\,Q\in\bbO^{k\times k}\}$.
Since $\cR(X) = \cR(XQ)$, we have $\al{X} = \al{XQ}$. Finally,
the equations in item~\eqref{cor:al:ie} can be verified by the general expression for $Q$ in~\eqref{eq:qform}.
\end{proof}

\subsection{Rank-preserving}\label{sec:proper}
We now establish several equivalent statements
to  \rankd~condition~\eqref{cond:rank}.

\begin{theorem}~\label{thm:rank-consistency}
Let $X\in\bbO^{n\times k}$ and $H(\,\cdot\,)$ be as in~\eqref{eq:hx22}.
	The following statements are equivalent.
	\begin{enumerate}[{\rm (a)}]
		\item\label{lem:rankeq:i:rk}
			$X$
			 satisfies  \rankd~condition \eqref{cond:rank}, i.e.,
            $\mrank(X^{\T}D) = \mrank(D)$.
		\item\label{lem:rankeq:i:null}
			$DV_2=0$, where $V_2$, a basis matrix of the null space
			$\cN(X^{\T}D)$, is from SVD~\eqref{eq:svdxtd}.
		\item\label{lem:rankeq:i:dx}
			$D X_a^{\T} =  DX_b^{\T}$ for all $X_a,X_b\in \al{X}$.
		\item\label{lem:rankeq:i:h}
			$H(X_a) = H(X_b)$ for all $X_a,X_b\in \al{X}$.
		\item\label{lem:rankeq:i:hx}
			$H(X_a)X = H(X_b)X$ for all $X_a,X_b\in \al{X}$.
	\end{enumerate}
\end{theorem}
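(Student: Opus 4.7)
The plan is to prove the five equivalences by establishing the cycle
(a) $\Leftrightarrow$ (b) $\Rightarrow$ (c) $\Rightarrow$ (d) $\Rightarrow$ (e) $\Rightarrow$ (b), so that all five conditions collapse together. Throughout, the SVD \eqref{eq:svdxtd} and the explicit parametrization of $\al{X}$ provided by \Cref{cor:al(X)-characterization} will be the two workhorses.

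For (a) $\Leftrightarrow$ (b), I would decompose $D = DV_1 V_1^{\T} + DV_2 V_2^{\T}$ and observe that $DV_1$ has rank exactly $\ell$: it has only $\ell$ columns, while $X^{\T} DV_1 = U_1\Sigma_1$ already has rank $\ell$. Then $DV_2 = 0$ immediately gives $\mrank(D) = \mrank(DV_1) = \ell = \mrank(X^{\T} D)$. Conversely, if $\mrank(D) = \ell$, then the columns of $DV_2$ must lie in $\cR(DV_1)$, so $DV_2 = DV_1 K$ for some matrix $K$; left-multiplying by $X^{\T}$ and using $X^{\T} DV_2 = 0$ yields $U_1\Sigma_1 K = 0$, and the full column rank of $U_1\Sigma_1$ forces $K = 0$, hence $DV_2 = 0$.

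For (b) $\Rightarrow$ (c), take arbitrary $X_a = XQ_a$ and $X_b = XQ_b$ in $\al{X}$ with $Q_a, Q_b$ in the form \eqref{eq:qform}. Since (b) gives $DV = [DV_1,\, 0]$, the identity
\[
DX_a^{\T} = DV\begin{bmatrix} I_\ell & \\ & \Omega_a^{\T} \end{bmatrix} U^{\T} X^{\T} = [DV_1,\, 0]\, U^{\T} X^{\T}
\]
is independent of $\Omega_a$, so $DX_a^{\T} = DX_b^{\T}$. For (c) $\Rightarrow$ (d), the coefficient $H$ in \eqref{eq:hx22} is the sum of four pieces: the unitary invariance of $H_\phi, H_\psi, \psi$ handles three of them; the trace $\tr(X_a^{\T} D) = \tr(M)$ is constant on $\al{X}$ by \eqref{eq:maxfxs3}; and (c) together with its transpose $X_a D^{\T} = X_b D^{\T}$ takes care of the remaining $D$-coupled term. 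The step (d) $\Rightarrow$ (e) is immediate.

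Closing the loop with (e) $\Rightarrow$ (b) will be the main technical step. Using the same unitary invariance observations, I would first reduce
\[
[H(X_a) - H(X_b)] X = \psi(X) \bigl[\, D(Q_a^{\T} - Q_b^{\T}) + (X_a D^{\T} X - X_b D^{\T} X)\, \bigr].
\]
The second bracketed term vanishes by the second identity in item (e) of \Cref{cor:al(X)-characterization}, and the first simplifies via \eqref{eq:qform} to $\psi(X) \cdot DV_2 (\Omega_a^{\T} - \Omega_b^{\T}) U_2^{\T}$. If (e) holds for every pair in $\al{X}$, this expression must vanish for every choice of $\Omega_a, \Omega_b \in \bbO^{(k-\ell)\times(k-\ell)}$; choosing $\Omega_a = -I_{k-\ell}$ and $\Omega_b = I_{k-\ell}$ yields $-2\psi(X)\, DV_2 U_2^{\T} = 0$, and right-multiplying by $U_2$ together with $\psi(X) > 0$ forces $DV_2 = 0$, i.e., (b). The delicate aspect is verifying the cancellation $X_a D^{\T}X = X_b D^{\T}X$, which relies on $D^{\T}X = V\Sigma U^{\T}$ combined with the zero block of $\Sigma$ annihilating the $\Omega$-dependence; this is exactly the content of item (e) in \Cref{cor:al(X)-characterization}.
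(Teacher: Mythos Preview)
Your proposal is correct and follows essentially the same approach as the paper: both arguments run the cycle through (c), (d), (e) using the SVD parametrization \eqref{eq:qform} of $\al{X}$ and \Cref{cor:al(X)-characterization}\eqref{cor:al:ie}, and both pick $\Omega_a=\pm I_{k-\ell}$ at the closing step. The only cosmetic differences are that the paper proves (a)$\Leftrightarrow$(b) via the rank--nullity theorem rather than your explicit decomposition $D=DV_1V_1^{\T}+DV_2V_2^{\T}$, and it closes the loop with (e)$\Rightarrow$(c) (right-multiplying $DX_a^{\T}X=DX_b^{\T}X$ by $X^{\T}$) instead of your (e)$\Rightarrow$(b); the underlying computations are identical.
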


\begin{proof}
	\eqref{lem:rankeq:i:rk}~$\Leftrightarrow$~\eqref{lem:rankeq:i:null}:
	Apply the rank-nullity theorem to $D$ and $X^{\T} D$, respectively, to get
	\[
		\mrank(D) + \mbox{dim}(\cN(D))=k=
		\mrank(X^{\T}D) +  \mbox{dim}(\cN(X^{\T}D)).
	\]
	Hence, $\mrank(D) = \mrank(X^{\T}D)$
	if and only if
	$\mbox{dim}(\cN(D)) = \mbox{dim}(\cN(X^{\T}D))$.
	The latter, since $\cN(D) \subset \cN(X^{\T}D)$,
	holds if and only if
	$\cN(X^{\T}D)\subset \cN(D)$, which is equivalent to
	$DV_2 = 0$ with $V_2$
	from SVD~\eqref{eq:svdxtd}.

	\eqref{lem:rankeq:i:null}~$\Leftrightarrow$~\eqref{lem:rankeq:i:dx}:
	It follows from~\eqref{eq:qform} that there are $Q_a$ and $Q_b$ taking the
	form of \eqref{eq:qform} such that
	\begin{equation}\label{eq:xaxb}
		X_a = XQ_a = X (U_1V_1^{\T} + U_2\Omega_aV_2^{\T}), \quad
		X_b = XQ_b = X (U_1V_1^{\T} + U_2\Omega_bV_2^{\T}),
	\end{equation}
	for some $\Omega_a,\Omega_b\in\bbO^{(k-\ell)\times (k-\ell)}$.
	If $DV_2 = 0$, then
	\[
		DX_a^{\T} = DV_1U_1^{\T}X^{\T} =  D X_b^{\T},
	\]
	which establishes item~\eqref{lem:rankeq:i:dx}.
	Conversely, if item~\eqref{lem:rankeq:i:dx} holds, then
	by taking two particular $X_a$ and $X_b$ from~\eqref{eq:xaxb}
	with $\Omega_a=I_{k-\ell}$ and $\Omega_b=-I_{k-\ell}$, we have
	\[
		0 = DX_a^{\T} - DX_b^{\T} = 2\cdot D V_2 U_2^{\T}X^{\T}.
	\]
    Post-multiplication by $XU_2$ yields $DV_2=0$, as expected.

	\eqref{lem:rankeq:i:dx}~$\Rightarrow$~\eqref{lem:rankeq:i:h}:
	Recall the definition of $H(\,\cdot\,)$ in~\eqref{eq:hx22}:
	\[
		H(X) = H_{\phi}(X) + \tr(X^{\T}D)\cdot H_{\psi}(X) +\psi(X)\cdot (DX^{\T}+XD^{\T}),
	\]
	where $H_{\phi}$, $H_{\psi}$, and $\psi(X) >0$ are all unitarily
	invariant.
	For $X_a=XQ_a$ and $X_b=XQ_b$,	
    \begin{subequations}\label{eq:hxab}
    	\begin{align}
    		H(X_a) &= H_{\phi}(X) + \tr(X_a^{\T}D)\cdot H_{\psi}(X) +\psi(X)\cdot
    		(DX_a^{\T}+X_aD^{\T}),\label{eq:hxab-a}\\
    		H(X_b) &= H_{\phi}(X) + \tr(X_b^{\T}D)\cdot H_{\psi}(X) +\psi(X)\cdot
    		(DX_b^{\T}+X_bD^{\T}). \label{eq:hxab-b}
    	\end{align}
    \end{subequations}
	By the assumption of item (c): $DX_a^{\T}=DX_b^{\T}$, we get $X_aD^{\T}=X_bD^{\T}$ by taking transpose, and
    $$
    \tr(X_a^{\T}D) = \tr(DX_a^{\T})=\tr(DX_b^{\T})=\tr(X_b^{\T}D),
    $$
    immediately
    implying $H(X_a) = H(X_b)$ by \eqref{eq:hxab}.

	\eqref{lem:rankeq:i:h}~$\Rightarrow$~\eqref{lem:rankeq:i:hx}:
	This holds trivially.

	\eqref{lem:rankeq:i:hx}~$\Rightarrow$~\eqref{lem:rankeq:i:dx}:
	It follows from~\eqref{eq:hxab} and
	\Cref{cor:al(X)-characterization}\eqref{cor:al:ie} that
	\[
	 0=	H(X_a) X - H(X_b)X = \psi(X)\cdot (DX_a^{\T}X- DX_b^{\T}X).
	\]
	Since $\psi(X) >0$, we conclude $DX_a^{\T}X= DX_b^{\T}X$ and thus
$D(X_a^{\T}X) X^{\T} =  D(X_b^{\T}X) X^{\T}$.
Plug in $X_a=XQ_a$ and $X_b=XQ_b$  to get $DX_a^{\T} =  DX_b^{\T}$.
	The proof is completed.
\end{proof}

\subsection{Proof of \Cref{thm:global}}
Now we are ready to prove our main \Cref{thm:global} of the section.

\begin{proof}[Proof of \Cref{thm:global}]
	Let $\widetilde X\in\al{X_*}$.
	It follows from~\eqref{eq:align0} that
	\begin{equation}\label{eq:ftxfxs}
		f(X_*) \leq f(\widetilde X) \leq \max_{X\in\bbO^{n\times k}} f(X)
		= f(X_*),
	\end{equation}
	where the equality  is by the global maximality of $f$ at $X_*$.
	So all equalities in \eqref{eq:ftxfxs} must hold, and thus we have
	\begin{equation}\label{eq:alopt}
		X_*\in \al{X_*}
		\qquad\text{and}\qquad
		\widetilde X \in \argmax_{X\in\bbO^{n\times k}} f(X),
	\end{equation}
	recalling the definition in~\eqref{eq:align0}.
	By $X_*\in \al{X_*}$
	and~\Cref{cor:al(X)-characterization}\eqref{cor:al:ic},
	we have $X_*^{\T}D\succeq 0$.

	It remains to show $\mrank(X_*^{\T}D) = \mrank(D)$.
	Equivalently, we will prove the statement
	in~\Cref{thm:rank-consistency}\eqref{lem:rankeq:i:hx} for $X=X_*$.
	We first recall~\eqref{eq:alopt}
	that any $\widetilde X\in\al{X_*}$
	is a global maximizer of~\eqref{eq:gopt}
	and, therefore, an eigenbasis matrix of  NEPv~\eqref{eq:nepv2}:
	\[
		H(\widetilde X) \widetilde X = \widetilde X \widetilde \Lambda
		\qquad\text{with}\qquad
		\widetilde \Lambda = \widetilde X^{\T} H(\widetilde X) \widetilde X.
	\]
	Since $\widetilde X = X_*Q$ for some $Q\in\bbO^{k\times k}$, we have
	\[
		H(\widetilde X) X_* =  X_*Q \widetilde \Lambda Q^{\T} = X_* \cdot
		(X_*^{\T} H(\widetilde X) X_*).
	\]
	Hence, for any two $X_a,X_b\in\al{X_*}$,
	\[
		H(X_a) X_* = X_*\cdot\left( X_*^{\T} H(X_a) X_*\right)
		           = X_*\cdot\left( X_*^{\T} H(X_b) X_*\right)
				   = H(X_b) X_*,
	\]
	where for obtaining the second equality, we have used $X_*^{\T} H(X_a) X_* = X_*^{\T} H(X_b) X_*$,
	which can be verified straightforwardly with the help of
	~\eqref{eq:hxab} and
	\Cref{cor:al(X)-characterization}\eqref{cor:al:ie} with $X=X_*$.
\end{proof}

By now it is clear that, as far as the global maximality of
optimization problem~\eqref{eq:gopt} is concerned,
we should at least seek a solution $X$ to NEPv~\eqref{eq:nepv2} such
that both definiteness condition \eqref{cond:ND} and
 \rankd~condition \eqref{cond:rank}
are satisfied
(although such $X$ is not guaranteed to be a global maximizer).
Our developments in the rest of this paper focus on finding such a solution.
Notice that, although the two conditions are consequences of
studying optimization problem~\eqref{eq:gopt},
they are really about the two matrices $X\in\bbO^{n\times k}$
and $D\in\bbR^{n\times k}$ only.
Given the importance of
both conditions~\eqref{cond:ND} and~\eqref{cond:rank} on $X$,
we hence introduce the notion of {\em $D$-regularity} for the
solution of  NEPv~\eqref{eq:nepv2}.  

\begin{definition}\label{dfn:D-proper}
{\rm
An eigenbasis matrix $X\in\bbO^{n\times k}$ of  NEPv~\eqref{eq:nepv2}
is called {\em \dproper\/} if it satisfies
both  definiteness condition \eqref{cond:ND} and
 \rankd~condition \eqref{cond:rank}.
}
\end{definition}

In the case of rank-deficient $D$, a \dproper~eigenbasis matrix
$X$ of  NEPv~\eqref{eq:nepv2} will have infinitely many
equivalent solutions given by $\widetilde X = XQ$ with $Q$
of  form~\eqref{eq:qform}, i.e., $\widetilde X \in\al{X}$
is an arbitrary alignment:
Clearly,  $\widetilde X$ so defined satisfies both conditions
\eqref{cond:ND} and \eqref{cond:rank},
and we also have $H(X) = H(\widetilde X)$,
due to the \rankd~condition and~\Cref{thm:rank-consistency}\eqref{lem:rankeq:i:h},
and so $H(\widetilde X) \widetilde X = \widetilde X \widetilde \Lambda$
with $\widetilde \Lambda = Q^{\T}\Lambda Q$.
Since any alignment $\widetilde X\in\al{X}$ is still
a \dproper~eigenbasis matrix, we view them as equivalent solutions.

\section{SCF Iteration}\label{sec:scf}
The self-consistent field (SCF) iteration is one of the most general and widely used
methods for solving NEPv~\cite{Cai:2018}. The term SCF comes from
the community of  computational physics and
chemistry in solving the Kohn-Sham equation in the density functional
theory~\cite{Martin:2004,Szabo:2012},
and it has since been widely adopted to refer to
similar ideas for solving any general NEPv~\eqref{eq:nepv}.
In this section, we propose an SCF-type iteration for
NEPv~\eqref{eq:nepv2},
with the purpose of solving  optimization problem \eqref{eq:gopt} over the Stiefel manifold
$\bbO^{n\times k}$ in mind. The SCF-type iteration is essentially due to
\cite{wazl:2022,zhwb:2020,Zhang:2020}, where special cases of \eqref{eq:gopt} were
considered.

From the perspective of the underlying optimization problem~\eqref{eq:gopt},
at each step,
our SCF-type iteration seeks a new approximate solution of NEPv~\eqref{eq:nepv2}
that fulfills  definiteness condition~\eqref{cond:ND}
and meanwhile increases the objective value.
It is rather easy to make $X_i^{\T}D\succeq 0$ for each iterate $X_i$ by alignment,
but monotonically increasing the objective value is highly nontrivial.
Inspired by the special
cases in \cite{wazl:2022,zhwb:2020,Zhang:2020}, where using the eigenspace of $H(X_i)$
associated with its $k$ largest eigenvalues as $X_{i+1}$ can always increase the objective value,
and also for the sake of explaining our convergence analysis,
we will state our SCF-type iteration
using the top eigenspace.
But we also caution that such an approach may not
always increase the objective value in general;
when that happens, the so-called {\em level-shifting\/} can often be applied,
and we will return to this in~\Cref{sec:ls}.

Our SCF-type iteration for NEPv~\eqref{eq:nepv2}
can be described as follows:
Starting from an initial guess $X_0\in\bbO^{n\times k}$,
generate sequentially $X_1,X_2,\dots$, all in $\bbO^{n\times k}$, by
solving the symmetric eigenvalue problems
\begin{equation}\label{eq:scf000}
	H( X_i)\wtd X_{i+1} =\wtd X_{i+1} \wtd \Lambda_{i+1}
	\qquad \text{for $i=0,1,2,\dots$,}
\end{equation}
where $\wtd X_{i+1}\in\bbO^{n\times k}$ is an orthonormal eigenbasis matrix of
$H( X_i)$ associated with its $k$ largest eigenvalues.
Here, the eigenbasis matrix $\wtd X_{i+1}$ may not automatically
satisfy  definiteness condition~\eqref{cond:ND}.
Since we are interested in \dproper~eigenbasis matrices
satisfying~\eqref{cond:ND},
we can align $\wtd X_{i+1}$ after solving \eqref{eq:scf000} to
\begin{equation}\label{eq:normalization}
 X_{i+1} \in \al{\widetilde X_{i+1}},
\end{equation}
namely, $X_{i+1}$ is taken from $\al{\wtd X_{i+1}}$
(and hence $X_{i+1}^{\T}D\succeq 0$),
which can be obtained by the SVD (or polar decomposition) of
$\wtd X_{i+1}^{\T}D$, recalling~\eqref{eq:qform}.
We therefore arrive at an SCF-type iteration summarized in~\Cref{alg:scf},
which is essentially from~\cite{wazl:2022,zhwb:2020,Zhang:2020} as mentioned.

\begin{algorithm}[t]
\caption{An SCF-type iteration for NEPv~\eqref{eq:nepv2}}
\label{alg:scf}

\begin{algorithmic}[1]
	\REQUIRE
	$X_0 \in \bbO^{n\times k}$ such that $X_0^{\T}D\succeq 0$;
	\ENSURE
	a solution to NEPv~\eqref{eq:nepv2} for the purpose of solving
	optimization problem~\eqref{eq:gopt}.
	\FOR{$i=0,1,2,\ldots$ until convergence}
		\STATE\label{alg:scf:scf}
			  solve the symmetric eigenproblem
			  $H( X_i)\widetilde  X_{i+1} = \widetilde X_{i+1} \widetilde \Lambda_{i+1}$,
			  where $\widetilde X_{i+1}\in\bbO^{n\times k}$ contains
			  the eigenvectors for the $k$ largest eigenvalues of $H(X_i)$;
		\STATE\label{alg:scf:normalize}
			 align $\widetilde X_{i+1}$ to get $X_{i+1} \in \al{\widetilde X_{i+1}}$;
	\ENDFOR
	\RETURN
	the last $X_i$ as a solution to NEPv~\eqref{eq:nepv2}.
\end{algorithmic}
\end{algorithm}

In~\Cref{alg:scf},
we assume that the initial basis matrix satisfies $X_0^{\T}D\succeq 0$;
otherwise, it can be aligned, i.e.,
$X_0\in \al{X_0}$
as in~\eqref{eq:normalization}.
We have also left out the stopping criterion for the SCF-loop.
As inspired by the common practice for
linear eigenvalue problems~\cite{abbd:1999,bddrv:2000,demm:1997},
we  introduce the normalized residual norm
to gauge the accuracy of an approximate solution $X$ of NEPv~\eqref{eq:nepv2}:
\begin{equation}\label{eq:NRes}
\mbox{NRes}(X) :=\frac{\| H(X)X-X [X^{\T}H(X) X]\|}{\|H(X)\|},
\end{equation}
where $\|\cdot\|$ is some matrix norm that is convenient to evaluate,
e.g.,
the matrix 1-norm (the maximum column sum in absolute value)
or Frobenius norm (the square root of the sum of squares of all elements).
Given a tolerance $\mbox{\tt tol}$,
a small number, the SCF-loop is considered converged if  $\mbox{NRes}(X_i)\le\mbox{\tt tol}$.

For a unitarily invariant NEPv, alignment \eqref{eq:normalization} becomes
unnecessary.
For such NEPv, the local convergence behavior of
the plain SCF, i.e.,~\eqref{eq:scf000} with $X_{i+1}=\wtd X_{i+1}$,
has been extensively studied,
and various improvements for SCF have been developed;
see, e.g.,~\cite{Bai:2022,Cai:2018,Cances:2021,Liu:2014,Stanton:1981,Upadhyaya:2021,Yang:2009}.
In particular, the recent work~\cite{Bai:2022}
established a sharp estimation for the local convergence rate of SCF,
and it also provided a theoretical guarantee for the effectiveness of a
level-shifting technique to fix the potential divergence issue of SCF.

For~\Cref{alg:scf},
due to the non-unitary invariance of $H(X)$ and the
extra alignment step in line~\ref{alg:scf:normalize},
most existing analysis and techniques for the plain SCF do not apply directly.
Although several recent works~\cite{wazl:2022,zhwb:2020,Zhang:2020} proved the convergence
of such algorithms for their respective special cases, their analyses
are specialized and do not apply to general NEPv~\eqref{eq:nepv2},
nor do they produce sharp estimation of  convergence rate.
New techniques are needed to better understand the
convergence (or divergence) of~\Cref{alg:scf}.


%
\section{Local Convergence Analysis}\label{sec:local}
We will start by showing that, locally near a \dproper~solution of NEPv \eqref{eq:nepv2},
the SCF-type iteration in \Cref{alg:scf} is the plain SCF
for an equivalent NEPv that is unitarily invariant.
By this connection, we can conveniently perform its local
convergence analysis along the line of~\cite{Bai:2022}.
In particular, a closed-form local convergence rate of \Cref{alg:scf} will be
established, and a theoretical foundation for a level-shifted variant of
SCF will be built.

\subsection{Aligned NEPv} \label{sec:aligned}
To begin with,
denote by ${\bbod}$ the set of $X\in\bbO^{n\times k}$ satisfying  \rankd~condition
\eqref{cond:rank}:
\begin{equation}\label{eq:DOM4aligned}
{\bbod}:= \{X\in\bbO^{n\times k}\,:\, \mrank(X^{\T}D) = \mrank(D)\}.
\end{equation}
Given $X\in\bbod$, the matrix $H(\wtd X)$ will not change
as $\wtd X$ varies in $\al{X}$, due to~\Cref{thm:rank-consistency}\eqref{lem:rankeq:i:h},
and so $H(\al{X}):=H(\wtd X)$ for $\wtd X\in\al{X}$ is well-defined.
We hence introduce
\begin{equation}\label{eq:alignedH(x)}
G(X):=H(\al{X})
\quad\mbox{for $X\in\bbod$},
\end{equation}
and call $G$ the {\em aligned\/} function of $H$ in~\eqref{eq:hx22}.
Accordingly, we introduce the \emph{aligned NEPv\/} of NEPv~\eqref{eq:nepv2}
as
\begin{equation}\label{eq:alnepv}
	G(X)X = X\Lambda  \quad\mbox{for $X\in\bbod$}.
\end{equation}
By~\Cref{cor:al(X)-characterization}\eqref{cor:al:id}, the aligned
function $G$ in~\eqref{eq:alignedH(x)}
is unitarily invariant, i.e.,
\begin{equation}\label{eq:univarG}
	\text{$G(XQ) = G(X)$\quad for\quad $X\in{\bbod}$ and $Q\in\bbO^{k\times k}$,}
\end{equation}
that is, the aligned NEPv~\eqref{eq:alnepv} is an unitarily invariant NEPv.

We emphasize that both the aligned matrix-valued function
$G$ in~\eqref{eq:alignedH(x)} and the aligned NEPv \eqref{eq:alnepv} have
their domain of existence ${\bbod}$.
Luckily, the domain ${\bbod}$ is a relative open set on the Stiefel manifold,
because  \rankd~condition~\eqref{cond:rank} always hold under
small perturbations, as shown in~\Cref{lm:rank-preservation} below.

%
\begin{lemma}\label{lm:rank-preservation}
Let $Y\in{\bbod}$. 
For $X\in\bbO^{n\times k}$, if $\cR(X)$ is
sufficiently close to $\cR(Y)$,
then $X\in{\bbod}$.
\end{lemma}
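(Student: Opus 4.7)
The plan is to exploit two classical facts: (i) the rank function is lower semicontinuous on the space of matrices, and (ii) closeness of two subspaces in $\scrG_k^n$ can always be realized as closeness of their orthonormal basis matrices after an orthogonal alignment.

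First, I would observe that the quantity $\mrank(X^{\T}D)$ depends only on the subspace $\cR(X)$, not on the particular orthonormal basis $X$. Indeed, for any $Q\in\bbO^{k\times k}$, $(XQ)^{\T}D = Q^{\T}(X^{\T}D)$, and left multiplication by the nonsingular matrix $Q^{\T}$ does not change the rank. Thus $X\mapsto \mrank(X^{\T}D)$ descends to a well-defined function on the Grassmannian $\scrG_k^n$. A second trivial but crucial bound is $\mrank(X^{\T}D)\le \mrank(D)$ for every $X\in\bbO^{n\times k}$, since multiplication by $X^{\T}$ cannot increase rank.

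Next, I would use the standard fact (via, e.g., the orthogonal polar factor of $Y^{\T}X$, or the CS decomposition) that if $\cR(X)$ is sufficiently close to $\cR(Y)$ in the Grassmannian, then there exists $Q\in\bbO^{k\times k}$ making $\|XQ - Y\|$ arbitrarily small. Consequently,
\[
\|(XQ)^{\T}D - Y^{\T}D\|\;\le\; \|XQ - Y\|\cdot\|D\|
\]
is arbitrarily small as well. Writing $M := (XQ)^{\T}D$, we therefore obtain a perturbation of $Y^{\T}D$ whose norm can be made as small as we wish by shrinking the subspace gap.

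Finally, I would invoke the lower semicontinuity of rank: if $r = \mrank(Y^{\T}D)$, then $Y^{\T}D$ admits some nonzero $r\times r$ minor, and by continuity of determinants this minor stays nonzero under sufficiently small perturbations. Hence $\mrank(M)\ge r$ once $\cR(X)$ is close enough to $\cR(Y)$. Since $Q$ is orthogonal, $\mrank(X^{\T}D) = \mrank((XQ)^{\T}D) = \mrank(M) \ge r = \mrank(Y^{\T}D) = \mrank(D)$, while the trivial upper bound gives the reverse inequality. Therefore $\mrank(X^{\T}D) = \mrank(D)$, i.e., $X\in\bbod$.

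There is no real obstacle here: the statement is essentially the openness of the maximum-rank stratum of $X\mapsto \mrank(X^{\T}D)$ on $\scrG_k^n$. The only mild care needed is in converting subspace closeness to matrix closeness via an orthogonal alignment $Q$, but this is standard and quantitatively controlled by, say, the largest principal angle between $\cR(X)$ and $\cR(Y)$.
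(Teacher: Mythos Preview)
Your proposal is correct and follows essentially the same approach as the paper: align $X$ to $Y$ by some $Q\in\bbO^{k\times k}$ so that $(XQ)^{\T}D$ is a small perturbation of $Y^{\T}D$, then combine lower semicontinuity of rank with the trivial upper bound $\mrank(X^{\T}D)\le\mrank(D)$. The only cosmetic difference is that the paper invokes continuity of singular values rather than continuity of minors to establish the lower-rank bound.
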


\begin{proof}
The sufficient closeness of $\cR(X)$  to $\cR(Y)$ implies that there
exists $Q\in\bbO^{k\times k}$
such that 
$\|XQ-Y\|$ is 
sufficiently small.
The condition $\mrank(Y^{\T}D) = \mrank(D)$ implies
that $Y^{\T}D$ has exactly $\ell=\mrank(D)$ nonzero singular values.
Notice
$$
Q^{\T} X^{\T}D=Y^{\T}D+(XQ-Y)^{\T}D,
$$
where the second term on the right-hand side is in the order of
$\|XQ-Y\|$,
which can be made as small as needed by letting $\cR(X)$ be sufficiently close to $\cR(Y)$.
Because the singular values of a matrix are continuous with respect to matrix entries
\cite{stsu:1990,li:2014HLA}, $Q^{\T}X^{\T}D$ has at least $\ell$ nonzero singular values,
implying $\mrank(Q^{\T} X^{\T}D)\ge \ell$. On the other hand,
$\mrank(Q^{\T}X^{\T}D) = \mrank(X^{\T}D)\le\mrank(D)=\ell$.
Hence, we conclude $\mrank(X^{\T}D)=\mrank(D)$, as was to be shown.
\end{proof}

Let $X\in\bbod$. The {\em canonical polar decomposition\/} of
$X^{\T}D$~\cite[Chapter~8]{Higham:2008} refers to
\begin{subequations}\label{eq:polarc0}
\begin{equation}\label{eq:polarc0-1}
	X^{\T}D = Q_{\bf o} M,
\end{equation}
where, in terms of SVD~\eqref{eq:svdxtd},
\begin{equation}\label{eq:polarqm}
	Q_{\bf o} = U_1V_1^{\T}
	\qquad\text{and}\qquad
	M =  V_1\Sigma_1 V_1^{\T}.
\end{equation}
\end{subequations}
By definition,
$X^{\T}D$ is factorized as the product of a partial isometry
$Q_{\bf o}$ (i.e., $\|Q_{\bf o} x\| = \|x\|$ for all $x\in\cR(Q_{\bf o}^{\T})$ where
$\|\cdot\|$ is the Euclidian vector norm)
and a symmetric positive semi-definite $M$.
Note that  $Q_{\bf o}\in\bbR^{k\times k}$ is not necessarily orthogonal, as in
contrast to an orthogonal factor of  polar decomposition~\eqref{eq:polar}.
The canonical polar factors $Q_{\bf o}$ and $M$ in~\eqref{eq:polarqm}
are uniquely defined (i.e., independent of any freedom in the SVD; see,
e.g., \cite[p.220]{begr:2003}, \cite[Chapter~8]{Higham:2008},
and \cite{li:1993b,li:1995,li:2014HLA,lisu:2002}).

Over  domain ${\bbod}$,  $G(X)$ also admits
an expression in terms of the canonical polar factors of $X^{\T}D$, with which we can
conveniently show the differentiability of $G(X)$ and obtain its
derivative.

\begin{lemma}\label{lem:welldef}
	Let $X\in{\bbod}$ and $G(X)$ be defined by~\eqref{eq:alnepv}, and let
	$X^{\T}D$ have the canonical polar decomposition in~\eqref{eq:polarc0}.
	Then we have 
	\begin{subequations}\label{eq:g2x}
	\begin{equation}\label{eq:g2x-1}
				G(X) = H_{\phi}(X) + G_\psi(X),
			\end{equation}
			where
			\begin{equation}\label{eq:gpx}
				G_\psi(X):= \tr(M)\cdot H_{\psi}(X) +\psi(X)\cdot
				\left(DQ_{\bf o}^{\T}X^{\T} + XQ_{\bf o}D^{\T}\right),
			\end{equation}
	\end{subequations}
	and $H_{\phi}(X)$ and $H_{\psi}(X)$ are from~\eqref{eq:der}.

\end{lemma}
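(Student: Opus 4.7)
The plan is to unwind the definition $G(X) = H(\widetilde X)$ with a cleverly chosen $\widetilde X \in \al{X}$ and simplify term-by-term using the unitary invariance of $H_\phi$, $H_\psi$, $\psi$ together with the $D$-regularity of $X$. By \Cref{thm:rank-consistency}\eqref{lem:rankeq:i:h}, $H(\widetilde X)$ does not depend on which alignment $\widetilde X \in \al{X}$ is selected, so I am free to pick any one. Using the SVD~\eqref{eq:svdxtd} of $X^{\T} D$ and \Cref{cor:al(X)-characterization}\eqref{cor:al:ia}, any $\widetilde X \in \al{X}$ has the form $\widetilde X = X Q$ with $Q = U_1 V_1^{\T} + U_2 \Omega V_2^{\T}$ for some $\Omega \in \bbO^{(k-\ell)\times(k-\ell)}$.

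The first step is to compute $\widetilde X^{\T} D$. A direct calculation using the block form of $Q$ gives $\widetilde X^{\T} D = Q^{\T} U \Sigma V^{\T} = V_1 \Sigma_1 V_1^{\T} = M$, so in particular $\tr(\widetilde X^{\T} D) = \tr(M)$. The second step is to show $D \widetilde X^{\T} = D Q_{\bf o}^{\T} X^{\T}$. Expanding, $D \widetilde X^{\T} = D Q^{\T} X^{\T} = (D V_1 U_1^{\T} + D V_2 \Omega^{\T} U_2^{\T}) X^{\T}$, and here is where $D$-regularity enters: \Cref{thm:rank-consistency}\eqref{lem:rankeq:i:null} gives $D V_2 = 0$, so only the first term survives, and since $Q_{\bf o}^{\T} = V_1 U_1^{\T}$ the identification $D \widetilde X^{\T} = D Q_{\bf o}^{\T} X^{\T}$ follows. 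Taking transpose yields $\widetilde X D^{\T} = X Q_{\bf o} D^{\T}$ at once.

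The third step is to substitute these three simplifications into the definition \eqref{eq:hx22} of $H$ evaluated at $\widetilde X$, and to use unitary invariance from \Cref{lem:dfx} (and the assumed unitary invariance of $\psi$) to replace $H_\phi(\widetilde X), H_\psi(\widetilde X), \psi(\widetilde X)$ by their values at $X$. Collecting terms gives precisely \eqref{eq:g2x-1}--\eqref{eq:gpx}.

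I do not anticipate a serious obstacle; the one subtle point is that one must invoke $D$-regularity at the right place, namely to kill $D V_2$ and thus collapse the $\Omega$-dependent part of $D\widetilde X^{\T}$. Without that condition the expression $D \widetilde X^{\T}$ would genuinely depend on the choice of alignment and $G(X)$ would not even be well-defined, so this is the conceptual heart of the lemma. All other manipulations are algebraic bookkeeping based on the explicit canonical polar factors $Q_{\bf o}=U_1V_1^{\T}$ and $M = V_1 \Sigma_1 V_1^{\T}$ from \eqref{eq:polarqm}.
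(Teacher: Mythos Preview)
Your proposal is correct and follows essentially the same approach as the paper's proof: both pick an arbitrary $\widetilde X\in\al{X}$, invoke unitary invariance to replace $H_\phi(\widetilde X),H_\psi(\widetilde X),\psi(\widetilde X)$ by their values at $X$, use $DV_2=0$ from \Cref{thm:rank-consistency}\eqref{lem:rankeq:i:null} to reduce $D\widetilde X^{\T}$ to $DQ_{\bf o}^{\T}X^{\T}$, and observe $\widetilde X^{\T}D=M$. Your explicit appeal to \Cref{thm:rank-consistency}\eqref{lem:rankeq:i:h} to justify independence of the choice of alignment is a nice touch that the paper leaves implicit.
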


\begin{proof}
	For any $\widetilde X \in\al{X}$, it follows from the expression~\eqref{eq:hxab} of $H$ that
	\begin{equation}\label{eq:hxq}
		H(\widetilde X) = H_{\phi}(X) + \tr(\widetilde X ^{\T}D)\cdot H_{\psi}(X) +\psi(X)\cdot
		(D\widetilde X^{\T}+\widetilde XD^{\T}).
	\end{equation}
	Recall SVD~\eqref{eq:svdxtd}.
	By $\mrank(X^{\T}D) = \mrank(D)$
	and~\Cref{thm:rank-consistency}\eqref{lem:rankeq:i:null}, we have $DV_2=0$,
	which implies
	\begin{equation}\label{eq:dxt}
		D \widetilde X^{\T} = D \left( V_1 U_1^{\T} + V_2\Omega^{\T} U_2^{\T} \right)X^{\T}
			= D\cdot(V_1 U_1^{\T})\cdot X^{\T}
			= D \cdot Q_{\bf o}^{\T} \cdot  X^{\T},
	\end{equation}
	where the first equality is by the expression of $\al{X}$ in~\eqref{eq:al(X)-characterization},
	and the last equality is because of the expressions in \eqref{eq:polarqm}.
	Also, $\widetilde X^{\T}D\equiv M$, independent of which $\widetilde X\in\al{X}$.
	So~\eqref{eq:hxq} leads directly to $H(\widetilde X) = H_{\phi}(X) + G_\psi(X)$,
	which does not change as $\widetilde X$ varies in $\al{X}$.
	This proves~\eqref{eq:g2x}.
\end{proof}

The next theorem makes it precise in what sense the original
NEPv~\eqref{eq:nepv2} and its aligned NEPv~\eqref{eq:alnepv} are
equivalent.

\begin{theorem}\label{thm:NEPv=alignedNEPv}
Let $X\in\bbO^{n\times k}$ satisfy  \rankd~condition \eqref{cond:rank},
i.e., $X\in\bbod$.
\begin{enumerate}[{\rm (a)}]
	\item\label{lem:nepv:a}
		If $X$
		is a \dproper~solution to NEPv~\eqref{eq:nepv2},
		then $X$ is a solution to the aligned NEPv~\eqref{eq:alnepv}.

	\item\label{lem:nepv:b}
		If $X$ is a solution to the
		aligned NEPv~\eqref{eq:alnepv},
		then any alignment $\widetilde X\in \al{X}$
		is a \dproper~solution to NEPv~\eqref{eq:nepv2}.
\end{enumerate}


\end{theorem}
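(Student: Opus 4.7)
The plan is to unfold the definition of $G$ in \eqref{eq:alignedH(x)} and use the characterizations of $\al{X}$ from \Cref{cor:al(X)-characterization} together with \Cref{thm:rank-consistency} to translate freely between the two NEPv's. Throughout, the $D$-regularity keeps us inside the domain $\bbod$ so that $G$ is well-defined and so that $H(\wtd X)$ takes a common value for all $\wtd X \in \al{X}$.

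For part \eqref{lem:nepv:a}, since $X$ is $D$-regular we have $X^{\T}D \succeq 0$, which by \Cref{cor:al(X)-characterization}\eqref{cor:al:ic} places $X \in \al{X}$. By the rank-preserving condition and \Cref{thm:rank-consistency}\eqref{lem:rankeq:i:h}, $H(\wtd X)$ is the same for every $\wtd X \in \al{X}$, and hence the value of $G$ can be read off at $X$ itself: $G(X) = H(X)$. Substituting this identity into the assumed \eqref{eq:nepv2-1} immediately gives $G(X)X = X\Lambda$, i.e., \eqref{eq:alnepv}.

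For part \eqref{lem:nepv:b}, pick any $\wtd X \in \al{X}$ and write $\wtd X = XQ$ with $Q\in\bbO^{k\times k}$ of the form \eqref{eq:qform}. First, $\wtd X^{\T}D = Q^{\T}X^{\T}D$, so the rank is preserved under multiplication by $Q^{\T}$, giving $\mrank(\wtd X^{\T}D) = \mrank(X^{\T}D) = \mrank(D)$ and thus $\wtd X \in \bbod$. Next, \Cref{cor:al(X)-characterization}\eqref{cor:al:id} yields $\al{\wtd X} = \al{XQ} = \al{X}$, so $\wtd X \in \al{\wtd X}$, which by \Cref{cor:al(X)-characterization}\eqref{cor:al:ic} is equivalent to $\wtd X^{\T}D \succeq 0$. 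Together with the rank identity, this says $\wtd X$ satisfies both conditions of \Cref{dfn:D-proper}. Finally, from $\wtd X \in \al{X}$ and the definition of $G$,
\[
H(\wtd X) \wtd X = G(X)\,XQ = X\Lambda Q = XQ\cdot(Q^{\T}\Lambda Q) = \wtd X\,\wtd\Lambda,
\]
with $\wtd\Lambda := Q^{\T}\Lambda Q$ symmetric, showing that $\wtd X$ is a solution to the original NEPv \eqref{eq:nepv2-1}.

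The only place that requires care, rather than a true obstacle, is the book-keeping around the multi-valued set $\al{X}$: one needs \Cref{cor:al(X)-characterization}\eqref{cor:al:id} to conclude $\al{\wtd X} = \al{X}$ so that the equivalence between membership in $\al{\,\cdot\,}$ and the definiteness condition can be applied to $\wtd X$, and one needs \Cref{thm:rank-consistency}\eqref{lem:rankeq:i:h} to identify $H(\wtd X)$ with $G(X)$ regardless of which representative of $\al{X}$ is chosen. Once these two invariance statements are in hand, both directions collapse to a direct substitution.
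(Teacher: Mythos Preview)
Your proof is correct and follows essentially the same approach as the paper: both use $X\in\al{X}$ (via \Cref{cor:al(X)-characterization}\eqref{cor:al:ic}) to identify $G(X)=H(X)$ for part~\eqref{lem:nepv:a}, and both use $G(X)=H(\wtd X)$ together with $\wtd X=XQ$ to derive $H(\wtd X)\wtd X=\wtd X(Q^{\T}\Lambda Q)$ for part~\eqref{lem:nepv:b}. Your version is slightly more explicit in verifying the rank-preserving condition for $\wtd X$ and routes the definiteness of $\wtd X^{\T}D$ through \Cref{cor:al(X)-characterization}\eqref{cor:al:id}--\eqref{cor:al:ic} rather than directly via \eqref{cor:al:ib}, but these are cosmetic differences.
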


\begin{proof}
Consider item~\eqref{lem:nepv:a}. That $X$ is \dproper\ implies
$X\in\bbod$ and $X\in\al{X}$ and hence $G(X) = H(\al{X}) = H(X)$. Together
with $H(X) X = X\Lambda$, we conclude $G(X)  X = X \Lambda$,
proving item~\eqref{lem:nepv:a}.

Now turn to item~\eqref{lem:nepv:b}.
By the definition of $G$ in~\eqref{eq:alignedH(x)},
$G(X) = H(\al{X}) = H(\widetilde X)$ for any $\widetilde X\in \al{X}$.
Recalling~\eqref{eq:al(X)-characterization} that $\wtd X=XQ$
for some $Q\in\bbO^{k\times k}$, we have
$$
G(X) X = X\Lambda
\qquad\Rightarrow\qquad
H(\widetilde X)\cdot  \widetilde X = \widetilde X\cdot \widetilde \Lambda,
$$
where $\widetilde \Lambda = Q^{\T} \Lambda Q$.
Namely, $\widetilde X\in\bbO^{n\times k}$ is a solution to  NEPv~\eqref{eq:nepv2}.
Moreover, \Cref{cor:al(X)-characterization}\eqref{cor:al:ib} implies
$\wtd X^{\T}D\succeq 0$, and
so $\wtd X$ is also \dproper.
\end{proof}

The aligned NEPv~\eqref{eq:alnepv} is defined
through~\eqref{eq:alignedH(x)} and  NEPv~\eqref{eq:nepv2}.
Alternatively, this new NEPv can be directly derived from the first-order
optimality condition of a different maximization problem
equivalent to~\eqref{eq:gopt}; see our discussions in~\Cref{sec:bimax}.
This alternative interpretation will become useful when it comes to justify
the sufficient condition for the convergence of
level-shifted SCF later in~\Cref{sec:ls}.

\subsection{Differentiability for Aligned NEPv}
In this subsection, we will show that  $G(X)$ for the aligned
NEPv~\eqref{eq:alnepv}
is differentiable in $X\in\bbR^{n\times k}$,
at  $X$ in domain ${\bbod}$ defined in \eqref{eq:DOM4aligned},
and establish closed-form expressions to various derivatives for the purpose
of analyzing the local convergence of the plain SCF on the aligned NEPv.
This will in turn allow us to derive a sharp  estimate for the rate of convergence of
our SCF-type iteration in \Cref{alg:scf} for  NEPv~\eqref{eq:nepv2}.
By the expression of $G(X)$ in~\eqref{eq:g2x}, it is sufficient to
consider the differentiability of the canonical polar factors $Q_{\bf o}$ and
$M$ of $X^{\T}D$ with respect to $X\in\bbR^{n\times k}$.

We begin with an alternative formula for the
canonical polar factors of $X^{\T}D$ for $X\in\bbod$.
Let $\rkd:=\mrank(D)$ and factorize  $D$ as
\begin{equation}\label{eq:dfact}
D = D_1 P^{\T},
\end{equation}
where $D_1\in\bbR^{n\times \rkd}$ has full column rank
and  $P\in\bbO^{k\times \rkd}$.
For $X\in\bbod$, we have
\begin{equation}\label{eq:rank}
	\mrank(X^{\T}D_1) = \mrank(X^{\T}D) = \mrank(D) \equiv \rkd,
\end{equation}
and, hence, $X^{\T}D_1\in\bbR^{k\times \rkd}$ has full column rank.

\begin{lemma} \label{lem:polar}
	Let $D$ be factorized as in~\eqref{eq:dfact}, $X\in{\bbod}$, and the
polar decomposition of $X^{\T}D_1\in\bbR^{k\times \rkd}$ be
	\begin{equation}\label{eq:polarc}
		X^{\T}D_1=  Q_1 M_1.
	\end{equation}
	\begin{enumerate}[{\rm (a)}]
		\item \label{i:lemma:polar:a}
			The canonical polar decomposition of $X^{\T}D$ is given by
			$X^{\T}D=Q_{\bf o} M$, where
			\begin{equation}\label{eq:qmxa}
				Q_{\bf o} = Q_1P^{\T}
				\quad\text{and}\quad
				M = PM_1P^{\T}.
			\end{equation}
		\item \label{i:lemma:polar:b}
			Both $Q_{\bf o}$ and $M$ are Fr\'echet differentiable with respect to $X\in\bbR^{n\times k}$, and
            their Fr\'echet derivatives along direction $E\in\bbR^{n\times k}$ are given by
            \begin{equation}\label{eq:dmdq}
				\begin{aligned}
					\D\!M(X)[E] = P L  P^{\T},
					\quad 
					\D Q_{\bf o}(X)[E] = \left(\,E^{\T}DP - Q_{\bf o} PL\,\right) M_1^{-1}P^{\T},
				\end{aligned}
			\end{equation}
			respectively, where
			$L$ is the solution to the Lyapunov equation
			\begin{equation}\label{eq:leqs}
			M_1 L + LM_1 =  D_1^{\T}\,(\,XE^{\T}+EX^{\T}\,)\,D_1.
			\end{equation}

		\item \label{i:lemma:polar:c}
			It holds that
			\begin{equation}\label{eq:trdmxe}
				\tr\left(\,\D\!M(X)[E]\,\right) = \tr \left(Q_{\bf o} D^{\T} E\right).
			\end{equation}

	\end{enumerate}
\end{lemma}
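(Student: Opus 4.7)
The plan for part (a) is direct verification followed by an appeal to uniqueness of the canonical polar decomposition. Starting from $D = D_1 P^{\T}$ with $P^{\T} P = I_{\rkd}$ and the polar decomposition $X^{\T}D_1 = Q_1 M_1$, I will write $X^{\T}D = Q_1 M_1 P^{\T}$ and insert $I_{\rkd} = P^{\T} P$ to factor it as $(Q_1 P^{\T})(P M_1 P^{\T}) = Q_{\bf o} M$. To confirm this factorization is canonical, I will verify its two defining properties: first, $M = P M_1 P^{\T}$ is symmetric positive semidefinite because $M_1 \succ 0$, which holds since $X\in\bbod$ forces $X^{\T}D_1$ to have full column rank by \eqref{eq:rank}; second, $Q_{\bf o}^{\T} Q_{\bf o} = P Q_1^{\T} Q_1 P^{\T} = P P^{\T}$ is the orthogonal projector onto $\cR(P) = \cR(M)$, so $Q_{\bf o}$ is a partial isometry satisfying $Q_{\bf o}^{\T} Q_{\bf o} M = M$. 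Uniqueness of the canonical polar decomposition then identifies $(Q_{\bf o}, M)$ as the canonical polar factors.

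For part (b), I will first argue that $M_1$ is Fr\'echet differentiable on $\bbod$. Since $M_1^{2} = D_1^{\T} X X^{\T} D_1 \succ 0$ in that region and the principal square root is smooth on positive definite matrices, differentiability follows. Differentiating $M_1^{2} = D_1^{\T} X X^{\T} D_1$ along $E$ yields the Lyapunov equation \eqref{eq:leqs} for $L := \D M_1(X)[E]$, whose unique solvability is guaranteed by $M_1\succ 0$. From $M = P M_1 P^{\T}$, I immediately get $\D M(X)[E] = P L P^{\T}$. For $Q_{\bf o}$, I will start from $Q_{\bf o} = Q_1 P^{\T}$ with $Q_1 = X^{\T} D_1 M_1^{-1}$, apply the product rule together with $\D(M_1^{-1})(X)[E] = -M_1^{-1} L M_1^{-1}$, and arrive at $\D Q_1(X)[E] = (E^{\T} D_1 - Q_1 L) M_1^{-1}$. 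Finally, I will convert to the stated form by substituting $D_1 = DP$ and $Q_1 = Q_{\bf o} P$, both consequences of $P^{\T} P = I_{\rkd}$.

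For part (c), cyclicity of trace and $\D M(X)[E] = P L P^{\T}$ give $\tr(\D M(X)[E]) = \tr(L)$. To evaluate $\tr(L)$, I will multiply the Lyapunov equation by $M_1^{-1}$ on the right and take traces: since $\tr(M_1 L M_1^{-1}) = \tr(L)$, this yields $2\tr(L) = \tr\bigl(M_1^{-1} D_1^{\T}(X E^{\T} + E X^{\T}) D_1\bigr)$. Cyclic and transpose manipulations show both summands on the right equal $\tr(X^{\T} D_1 M_1^{-1} D_1^{\T} E)$, giving $\tr(L) = \tr(X^{\T} D_1 M_1^{-1} D_1^{\T} E)$. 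Since $Q_{\bf o} D^{\T} = Q_1 P^{\T} P D_1^{\T} = Q_1 D_1^{\T} = X^{\T} D_1 M_1^{-1} D_1^{\T}$, the identity $\tr(\D M(X)[E]) = \tr(Q_{\bf o} D^{\T} E)$ follows.

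The main obstacle is not any single calculation but the bookkeeping in part (b): the derivatives naturally come out in terms of $D_1$, $Q_1$, and $M_1$, and I must re-express them through the intrinsic objects $D$, $Q_{\bf o}$, $M_1$ using $P^{\T}P = I_{\rkd}$, $D_1 = DP$, and $Q_1 = Q_{\bf o} P$ so that the final formulas match~\eqref{eq:dmdq}. Conceptually, the key prerequisite throughout is the positive definiteness of $M_1$, which is precisely where membership in the domain $\bbod$ is used — both to make the square-root map smooth and to ensure the Lyapunov equation has a unique solution.
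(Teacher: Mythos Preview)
Your proposal is correct and follows essentially the same route as the paper: verify the factorization and invoke uniqueness of the canonical polar decomposition for part~(a); differentiate $M_1^2=D_1^{\T}XX^{\T}D_1$ to obtain the Lyapunov equation and then push the derivatives through $M=PM_1P^{\T}$ and $Q_{\bf o}=Q_1P^{\T}$ for part~(b); and use cyclicity of the trace after right-multiplying the Lyapunov equation by $M_1^{-1}$ for part~(c). The only cosmetic difference is that the paper packages the differentiability and derivative formulas for the full-column-rank polar factors $Q_1,M_1$ into a separate appendix lemma rather than deriving them inline as you do.
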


\begin{proof}
For item~\eqref{i:lemma:polar:a},
observe that $Q_{\bf o}$ in~\eqref{eq:qmxa} is a partial isometry
(i.e., $\|Q_{\bf o} x\| = \|x\|$ for all $x\in\cR(Q_{\bf o}^{\T})$)
and $M\succeq 0$.
By~\eqref{eq:dfact}, \eqref{eq:polarc}, and \eqref{eq:qmxa}, we get
\[
Q_{\bf o} M = Q_1M_1P^{\T} = X^{\T}D_1P^{\T}  = X^{\T}D,
\]
yielding the canonical polar decomposition of $X^{\T}D$, since the canonical polar decomposition
is always unique (see, e.g., \cite[p.220]{begr:2003} and \cite[Chapter~8]{Higham:2008}).

For item~\eqref{i:lemma:polar:b}, observe that $X^{\T}D_1$ has full column rank.
According to~\Cref{lem:polard} in the appendix,
the polar factors $M_1$ and $Q_1$ from~\eqref{eq:polarc}
are differentiable in $X$, and they have derivatives along direction $E\in\bbR^{n\times k}$
given by~\footnote{
Notice the fact that, for $Z=X^{\T}D_1$ and $Y=E^{\T} D_1$,
the Fr\'echet derivative of a differentiable function
$F(Z)$ satisfies $\D F(Z)[Y] = \D \wtd F(X)[E]$
with $\wtd F(X) := F(X^{\T}D_1)$,
as can be verified by straightforward expansions of the functions.
}
\begin{equation}\label{eq:dm1q1}
	\D\!M_1(X)[E] =  L
	\quad\text{and}\quad
	\D Q_1(X)[E] = (E^{\T}D_1 - Q_1 L) \cdot M_1^{-1},
\end{equation}
where $L$ satisfies~\eqref{eq:leqs}.
Consequently, by~\eqref{eq:qmxa}, the canonical polar factors
$M$ and $Q_{\bf o}$ (depending on $M_1$ and $Q_1$) are differentiable in $X$ as well, and
their derivatives along direction $E\in\bbR^{n\times k}$ are
\[
	\D\!M(X)[E] = P\cdot \D\!M_1(X)[E] \cdot P^{\T}
	\quad\text{and}\quad
	\D Q_{\bf o}(X)[E] = \D Q_1(X)[E] \cdot P^{\T}.
\]
Then by~\eqref{eq:dm1q1} we get~\eqref{eq:dmdq}.

For item~\eqref{i:lemma:polar:c},
since $P^{\T}P=I_{\rkd}$, \eqref{eq:dmdq} implies
$\tr\left(\D\!M(X)[E]\right) =\tr\left(L\right)$.
Post-multiplying both sides of \eqref{eq:leqs} by
$M_1^{-1}$ and then taking  trace, we obtain
\begin{equation}\label{eq:trm1l}
	2\,\tr(L)
	=
	\tr\left(D_1^{\T}XE^{\T}D_1M_1^{-1}\right)
	+
	\tr\left(D_1^{\T}EX^{\T}D_1M_1^{-1}\right),
\end{equation}
where we have used $\tr(M_1LM_1^{-1}) 
= \tr(L)$.
The two terms on the right hand side of \eqref{eq:trm1l} are identical
because of $\tr(AM_1^{-1}) = \tr(M_1^{-1}A) = \tr(A^{\T}M_1^{-1})$
for all $A\in\bbR^{\rkd\times \rkd}$, where the last equality is by transposing
the argument and $M_1^{\T}=M_1$.
So \eqref{eq:trm1l} leads to
\[
\tr(L)
= \tr\left(D_1^{\T}EX^{\T}D_1M_1^{-1}\right)
= \tr\left(D_1^{\T}EQ_1\right)
= \tr\left(Q_1D_1^{\T}E\right),
\]
which yields \eqref{eq:trdmxe} upon noticing
 $Q_1D_1^{\T}= (Q_1 P^{\T}) (D_1P^{\T})^{\T} = Q_{\bf o} D^{\T}$.
\end{proof}

Suppose that  $H(X)$ in~\eqref{eq:hx22} is differentiable
at  $X\in\bbR^{n\times k}$ that has orthonormal columns
(by~\Cref{lem:dfx}, this always holds if $\phi$ and $\psi$ in~\eqref{eq:gopt} are twice
differentiable functions).
Then the differentiability of the canonical polar factors, as
in~\Cref{lem:polar}, implies that
$G(X)$ in~\eqref{eq:g2x} is differentiable at any $X\in{\bbod}$,
i.e., satisfying \rankd~condition~\eqref{cond:rank}.
By a straightforward application of the chain rule of differentiation,
we derive from~\eqref{eq:g2x} the
following formula for assembling and calculating the derivative ${\bf D} G(X)[E]$.

\begin{corollary}\label{cor:dervative}
Let $X\in\bbod$ and assume $\phi$ and $\psi$ in~\eqref{eq:gopt} are twice differentiable
at $X$. 
Then,
\begin{multline}\label{eq:dgx}
	{\bf D} G(X)[E] =
	{\bf D}H_{\phi}(X)[E]
	+ \tr({\bf D}M(X)[E])\cdot H_{\psi}(X) \\
	+ \tr(M)\cdot {\bf D}H_{\psi}(X)[E]
	+ {\bf D}\psi(X)[E] \cdot \left(DQ_{\bf o}^{\T}X^{\T} + X Q_{\bf o} D^{\T}\right)\\
	+ \psi(X) \cdot \left(D\cdot {\bf D}Q_{\bf o}(X)[E]^{\T}\cdot X^{\T} + X\cdot
	{\bf D}Q_{\bf o}(X)[E]\cdot D^{\T}\right) \\
	+ \psi(X) \cdot \left(DQ_{\bf o}^{\T} E^{\T} + E Q_{\bf o} D^{\T}\right),
\end{multline}
where
${\bf D}M$ and ${\bf D}Q_{\bf o}$ are given by~\eqref{eq:dmdq},
and $H_{\phi}$ and $H_{\psi}$
are from the expression of $H(X)$ in~\eqref{eq:nepv2}.
\end{corollary}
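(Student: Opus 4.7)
The plan is to apply the Fréchet derivative term-by-term to the expression for $G(X)$ given by~\eqref{eq:g2x}, namely
\[
	G(X) = H_\phi(X) + \tr(M)\cdot H_\psi(X) + \psi(X)\cdot (DQ_{\bf o}^{\T} X^{\T} + X Q_{\bf o} D^{\T}),
\]
exploiting the linearity of $\D(\,\cdot\,)[E]$ and the standard product and chain rules. The prerequisites for differentiability are already in hand: by~\Cref{lem:dfx}, twice differentiability of $\phi$ and $\psi$ allows choosing $H_\phi$ and $H_\psi$ to be differentiable in $X$; by~\Cref{lem:polar}(b), $M$ and $Q_{\bf o}$ are differentiable at every $X\in\bbod$, with explicit formulas \eqref{eq:dmdq} for their derivatives along $E$.

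First I would split $\D G(X)[E] = \D H_\phi(X)[E] + \D G_\psi(X)[E]$ using linearity. Then I would treat the two summands of $G_\psi$ separately. For the scalar-times-matrix product $\tr(M)\cdot H_\psi(X)$, the scalar-function product rule gives
\[
 \D\bigl[\tr(M)\cdot H_\psi(X)\bigr][E] = \tr\bigl(\D M(X)[E]\bigr)\cdot H_\psi(X) + \tr(M)\cdot \D H_\psi(X)[E],
\]
using the commutativity of trace with the linear operator $\D(\cdot)[E]$. For the second summand $\psi(X)\cdot (DQ_{\bf o}^{\T} X^{\T} + XQ_{\bf o} D^{\T})$, another application of the product rule peels off the scalar factor $\psi(X)$, producing the term $\D\psi(X)[E]\cdot (DQ_{\bf o}^{\T} X^{\T} + XQ_{\bf o} D^{\T})$ plus $\psi(X)$ times the derivative of the matrix factor.

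It then remains to differentiate $DQ_{\bf o}^{\T} X^{\T} + XQ_{\bf o} D^{\T}$. The constants $D$ and $D^{\T}$ pass through the derivative, and the product rule applied to $Q_{\bf o}^{\T}(X) X^{\T}$ and $X\,Q_{\bf o}(X)$ yields
\[
	\D(DQ_{\bf o}^{\T} X^{\T})[E] = D\cdot \D Q_{\bf o}(X)[E]^{\T}\cdot X^{\T} + DQ_{\bf o}^{\T} E^{\T},
\]
and the transpose counterpart for $\D(XQ_{\bf o} D^{\T})[E]$. Assembling all the pieces produces exactly~\eqref{eq:dgx}. No estimation or nontrivial algebra is needed; the only care required is bookkeeping of which factors in each matrix product depend on $X$ and which do not, and remembering that the derivative of $X\mapsto X^{\T}$ in direction $E$ is $E^{\T}$. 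In that sense there is no substantive obstacle: the result is a routine consequence of \Cref{lem:polar} together with the product and chain rules, and the statement primarily serves to record a closed form convenient for the forthcoming local convergence analysis.
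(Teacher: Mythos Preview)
Your proposal is correct and matches the paper's approach exactly: the paper states the corollary follows ``by a straightforward application of the chain rule of differentiation'' to the expression~\eqref{eq:g2x} for $G(X)$, without giving further detail, and that is precisely what you do term-by-term.
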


\subsection{Rate of Convergence}
We now perform a local convergence analysis of~\Cref{alg:scf}. Recall that the creation of
NEPv~\eqref{eq:nepv2} aims to solve  optimization problem \eqref{eq:gopt}
whose global maximizers are provably \dproper\ eigenbasis matrices of the NEPv.
Because of this, and as we are performing a local convergence analysis,
we may assume without loss of generality that the initial guess $X_0$ is
such that $\cR(X_0)$ is sufficiently close to $\cR(X_*)$,
where $X_*\in\bbO^{n\times k}$ is a \dproper~solution of
NEPv~\eqref{eq:nepv2},
i.e., satisfying both conditions \eqref{cond:ND} and \eqref{cond:rank}:
\begin{equation}\label{eq:neccX*}
X_*^{\T}D \succeq 0
\quad\text{and}\quad
\mrank(X_*^{\T}D) = \mrank(D)=:\rkd.
\end{equation}
The key observation of our analysis is that,
locally around $\cR(X_*)$,
we can identify the SCF-type iteration in~\Cref{alg:scf}
as the plain SCF for the aligned NEPv~\eqref{eq:alnepv}:
each iterative step of the SCF-type iteration in~\Cref{alg:scf} satisfy
\begin{equation}\label{eq:scf00}
	G( X_i)  X_{i+1} =  X_{i+1} \Lambda_{i+1},
	\qquad \text{for $i=0,1,2,\dots$,}
\end{equation}
where the eigenvalues of $\Lambda_{i+1}\in\bbR^{k\times k}$ are the
$k$ largest eigenvalues of $G( X_i)$, provided $X_i\in\bbod$,
which ensures that $G( X_i)$ in~\eqref{eq:scf00} is well-defined
by~\eqref{eq:alignedH(x)}.
The conditions $X_i\in\bbod$ always hold if
$\cR(X_0)$ is sufficiently close to $\cR(X_*)$  in the case of convergence;
whereas in the case of divergence, it is still reasonable to assume that,
at least for the first few SCF-type iterative steps of~\Cref{alg:scf}
before $\cR(X_i)$ deviates too far from $\cR(X_*)$.

For the plain SCF~\eqref{eq:scf00},
since $G(X)$ is unitarily invariant according to~\eqref{eq:univarG},
we can apply the existing local convergence results for unitarily invariant
NEPv in, e.g.,~\cite{Bai:2022,Cai:2018}.
Our goal in the following is to  establish the local convergence rate
of~\Cref{alg:scf} through the plain SCF~\eqref{eq:scf00}.

First, let $G(X_*)$ have the eigenvalue decomposition
\begin{equation}\label{eq:eigdgx}
G(X_*)=
[X_*, X_{*\bot}]
\begin{bmatrix} \Lambda_* & \\ & \Lambda_{*\bot} \end{bmatrix}
[X_*, X_{*\bot}]^{\T},
\end{equation}
where
$\Lambda_* = \mbox{diag} (\lambda_1,\dots,\lambda_k)$,
$\Lambda_{*\bot} = \mbox{diag} (\lambda_{k+1},\dots,\lambda_n)$, and
$\lambda_1\geq\cdots\geq\lambda_k \ge \lambda_{k+1}\geq \cdots\geq \lambda_n$.
As in \cite{Bai:2022,Cai:2018}, we assume
$$
\lambda_k - \lambda_{k+1} > 0,
$$
otherwise the eigenspace for the $k$
largest eigenvalues of $G(X_*)$ is not unique~\cite{daka:1970,li:2014HLA,stsu:1990}.
Define
\[
	S(X_*) \in\mathbb R^{(n-k)\times k}
	\qquad\text{by\qquad $[S(X_*)]_{ij} = \left( \lambda_j - \lambda_{k+i}
	\right)^{-1}$},
\]
and linear operator
$\scrL \colon \mathbb R^{(n-k)\times k}\to \bbR^{(n-k)\times k}$ by
\begin{equation}\label{eq:lz}
	\scrL(Z) := S(X_*) \odot \left( X_{*\bot}^{\HH}\ \cdot \mathbf
	DG(X_*)[X_{*\bot}Z]\cdot X_*\right),
\end{equation}
where $\odot$ is the element-wise multiplication, also known as the matrix Hadamard  product,
and $\mathbf DG$ is given by~\eqref{eq:dgx}.

According to the convergence analysis in~\cite{Bai:2022}, the spectral radius $\rho(\scrL)$
of  linear operator $\scrL$ in~\eqref{eq:lz} 
is the local convergence rate of the plain SCF~\eqref{eq:scf00}
and, hence, that of  \Cref{alg:scf} as well.
A restatement of~\cite[Theorem~4.2]{Bai:2022} for the plain SCF~\eqref{eq:scf00} yields
the following theorem for the local {\em convergence-in-subspace}
(i.e., the convergence of $\cR(X_i)$ as $i\to\infty$)
of the SCF-type iteration in \Cref{alg:scf}. 

\begin{theorem}\label{cor:conv}
	Let $X_*\in\bbO^{n\times k}$ be a \dproper~solution to  NEPv~\eqref{eq:nepv2}
	such that the corresponding $\Lambda_*\equiv X_*^{\T}H(X_*)X_*$ contains the
	$k$ largest eigenvalues of $H(X_*)$.
	Assume that
	\begin{equation}\label{eq:gaphx*}
	\lambda_k(H(X_*)) > \lambda_{k+1}(H(X_*))
	\end{equation}
	and $H(X)$ is differentiable at $X_*$,
	and let $\rho(\scrL)$ be the spectral radius of the linear
	operator $\scrL$ defined by~\eqref{eq:lz}.
	\begin{enumerate}[\rm (a)]
		\item\label{cor:conv:i:sprd}
			If $\rho(\scrL) < 1$, then~\Cref{alg:scf} is locally
			convergent-in-subspace to $X_*$,
			with an asymptotic average convergence rate bounded by
			$\rho(\scrL)$.
		\item
			If $\rho(\scrL) > 1$, then~\Cref{alg:scf} is locally divergent-in-subspace from
			$X_*$.
	\end{enumerate}
\end{theorem}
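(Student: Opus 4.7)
The plan is to reduce the claim to the corresponding local convergence result for the plain SCF applied to the aligned NEPv~\eqref{eq:alnepv}, which is unitarily invariant, and then to invoke \cite[Theorem~4.2]{Bai:2022}. The central bridge, already anticipated by \eqref{eq:scf00} in the text leading up to the statement, is that near $X_*$ each step of \Cref{alg:scf} coincides at the subspace level with one step of the plain SCF on $G(\,\cdot\,)$. So the argument splits naturally into three parts: (i) show that all iterates remain in the domain $\bbod$ where $G$ is defined; (ii) exhibit the equivalence between \Cref{alg:scf} and the plain SCF for the aligned NEPv; (iii) transfer the convergence statement from the aligned problem back to the original iteration.

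For step~(i), because $X_*\in\bbod$ by hypothesis, \Cref{lm:rank-preservation} gives a neighborhood in the Grassmannian $\scrG_k^n$ around $\cR(X_*)$ such that every $X$ with $\cR(X)$ inside it lies in $\bbod$. Choosing $X_0$ with $\cR(X_0)$ in this neighborhood, every $\widetilde X_{i+1}$ produced by line~\ref{alg:scf:scf} of \Cref{alg:scf} then satisfies $\widetilde X_{i+1}\in\bbod$ as long as its column span stays close to $\cR(X_*)$, and alignment preserves both the span and the rank condition by \Cref{cor:al(X)-characterization}\eqref{cor:al:id}. For step~(ii), since $X_i$ after alignment satisfies $X_i^{\T}D\succeq 0$, we have $X_i\in\al{X_i}$ and thus $G(X_i)=H(\al{X_i})=H(X_i)$ by \eqref{eq:alignedH(x)}; hence the eigenproblem in line~\ref{alg:scf:scf} is literally $G(X_i)\widetilde X_{i+1}=\widetilde X_{i+1}\widetilde\Lambda_{i+1}$, and since $X_{i+1}\in\al{\widetilde X_{i+1}}$ has $\cR(X_{i+1})=\cR(\widetilde X_{i+1})$, it is itself an orthonormal eigenbasis matrix of $G(X_i)$ for the top $k$ eigenvalues, i.e.\ \eqref{eq:scf00} holds. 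In particular, the sequence of subspaces $\{\cR(X_i)\}$ is identical to that generated by the plain SCF on the aligned NEPv.

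For step~(iii), we verify that \cite[Theorem~4.2]{Bai:2022} applies to the plain SCF~\eqref{eq:scf00}. Unitary invariance of $G$ is \eqref{eq:univarG}; $X_*$ solves the aligned NEPv by \Cref{thm:NEPv=alignedNEPv}\eqref{lem:nepv:a}; differentiability of $G$ at $X_*$ follows from \Cref{cor:dervative} together with the hypothesized differentiability of $H$; and the gap condition \eqref{eq:gaphx*} coincides with $\lambda_k(G(X_*))>\lambda_{k+1}(G(X_*))$ because $G(X_*)=H(X_*)$. The linearization of the plain SCF at $X_*$, expressed in local coordinates $Z\in\bbR^{(n-k)\times k}$ on $\scrG_k^n$ tangent at $\cR(X_*)$, is precisely the operator $\scrL$ in \eqref{eq:lz} (this is the form used in \cite{Bai:2022}, derived from first-order expansion of $G(X)$ combined with the Davis--Kahan-type resolution of $\mathbf D G(X_*)[X_{*\bot}Z]$). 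Thus \cite[Theorem~4.2]{Bai:2022} gives local convergence-in-subspace with asymptotic average rate bounded by $\rho(\scrL)$ when $\rho(\scrL)<1$, and local divergence-in-subspace when $\rho(\scrL)>1$, yielding both parts~(a) and~(b).

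The main obstacle, I expect, is the bookkeeping in step~(ii): one must be careful that although the individual matrices $X_{i+1}$ and $\widetilde X_{i+1}$ differ, all quantities that enter the convergence analysis depend only on $\cR(X_i)$, and that the alignment step does not disturb the reduction to \eqref{eq:scf00}. Once this equivalence is secured and the domain $\bbod$ is shown to trap the iteration near $X_*$ via \Cref{lm:rank-preservation}, the rest is a direct citation of \cite{Bai:2022}; no new linearization computation is required beyond identifying $\scrL$ with the linearization operator already defined through \eqref{eq:lz} and \Cref{cor:dervative}.
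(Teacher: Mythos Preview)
Your proposal is correct and follows essentially the same approach as the paper: reduce \Cref{alg:scf} locally to the plain SCF~\eqref{eq:scf00} on the unitarily invariant aligned NEPv~\eqref{eq:alnepv} via \Cref{lm:rank-preservation} and the alignment identity $G(X_i)=H(X_i)$ for aligned $X_i\in\bbod$, then invoke \cite[Theorem~4.2]{Bai:2022}. Your write-up is in fact more explicit than the paper's about verifying the hypotheses of that theorem (unitary invariance via~\eqref{eq:univarG}, differentiability via \Cref{cor:dervative}, the gap condition via $G(X_*)=H(X_*)$), but the substance is the same.
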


Convergence-in-subspace, or divergence-in-subspace for that matter,
is measured by
the canonical angles between subspaces. In the case of \Cref{cor:conv}, it is about
\[
\mbox{whether}\quad	\Theta(X_{i},X_*) \to 0\quad\mbox{or}\quad
\Theta(X_{i},X_*) \not\to 0\qquad \text{as $i\to \infty$,}
\]
where $\Theta(X_i,X_*)$ denotes the diagonal matrix of the canonical angles between the subspaces
$\cR(X_i)$ and $\cR(X_*)$ \cite{Bai:2022,Cai:2018,stsu:1990}.
So, \Cref{cor:conv}\eqref{cor:conv:i:sprd} can be interpreted as:
Given an initial $X_0$ with sufficiently small 
$\Theta(X_0,X_*)$,
for an arbitrarily small $\epsilon>0$,
\begin{equation}\label{eq:convrate}
\|\Theta(X_{i+m},X_*) \|
\ \leq c\, [\rho(\scrL)+\epsilon]^m\cdot \|\Theta(X_i,X_*) \|
\qquad\text{as $i,m\to \infty$},
\end{equation}
where $c$ is some constant and $\|\cdot\|$ is any unitarily invariant matrix norm,
such as the $2$-norm and the Frobenius norm.
The reader is referred to~\cite[Section~4]{Bai:2022} for more discussions on the
local convergence rate of SCF.

We emphasize that the convergence result in~\Cref{cor:conv}
holds even if $D\in\bbR^{n\times k}$ is rank-deficient (i.e., $\mbox{rank}(D)< k$).
In the rank-deficient case,  according to~\eqref{eq:al(X)-characterization},
the alignment function $\al{\wtd X_{i+1}}$ at line~\ref{alg:scf:normalize}~\Cref{alg:scf}
is multi-valued, and so approximation $X_{i+1}$ is not uniquely defined.
However, provided~$X_*$ is a~\dproper~eigenbasis matrix
satisfying the gap condition~\eqref{eq:gaphx*},
the sequence of eigenspaces $\{\cR(X_i)\}$ by~\Cref{alg:scf}
is unique as each $\cR(X_i)$ is sufficiently close to $\cR(X_*)$,
and the rate of local convergence-in-subspace is given by $\rho(\scrL)$.
Later in~\Cref{sec:egs}, we will numerically demonstrate the results of \Cref{cor:conv}.

\section{Level-Shifted SCF}\label{sec:ls}
For NEPv from the special optimization problems \eqref{eq:gopt} studied in
\cite{wazl:2022,zhwb:2020,Zhang:2020},
\Cref{alg:scf} is provably convergent globally from
any initial guess.
However, for NEPv arising from a general optimization problem~\eqref{eq:gopt},
the algorithm may diverge (even with very accurate initial guesses),
as we shall demonstrate by numerical examples in~\Cref{sec:egs}.

When \Cref{alg:scf} diverges, we may apply a level-shifting scheme,
which has been commonly adopted to unitarily
invariant NEPv for fixing
the issue of eigenvalue mispositioning,
by which we mean not all eigenvalues
of $\Lambda$ at optimality  are among the largest ones
and, as a result,
\Cref{alg:scf} is inevitably divergent;
see, e.g.,~\cite{Bai:2018,Saunders:1973,Thogersen:2004,Yang:2007}.
The level-shifting technique simply modifies the matrix-valued function $H(X)$ to $H(X)+\sigma XX^{\T}$,
where $\sigma\in\bbR$
is a preselected level-shift. Note that the addendum
$\sigma XX^{\T}$ is unitarily invariant.
In an obvious way, this scheme
can be conveniently adapted to~\Cref{alg:scf}, by changing
$H(X_i)$ at line 2 to $H(X_i)+\sigma X_iX_i^{\T}$.
For ease of reference, we outline the level-shifted variant of \Cref{alg:scf}
in \Cref{alg:LS-scf}, where again we may stop the iteration
if $\mbox{NRes}(X_i)\le\mbox{\tt tol}$, as commented earlier for \Cref{alg:scf}.

\begin{algorithm}[t]
	\caption{A Level-Shifted SCF-type iteration for NEPv~\eqref{eq:nepv2}}
	\label{alg:LS-scf}
\begin{algorithmic}[1]
\REQUIRE $X_0 \in \bbO^{n\times k}$ such that $X_0^{\T}D\succeq 0$, and a level-shift $\sigma$;
\ENSURE  a solution to NEPv~\eqref{eq:nepv2} for the purpose of solving optimization problem~\eqref{eq:gopt}.
	\FOR{$i=0,1,\ldots$ until convergence}
	    \STATE\label{alg:LS-scf:scf}
	          solve the symmetric eigenproblem $[H( X_i)+\sigma X_iX_i^{\T}]\widetilde  X_{i+1} =
	          \widetilde X_{i+1} \widetilde \Lambda_{i+1}$, where $\widetilde X_{i+1}\in\bbO^{n\times k}$ contains
	          the eigenvectors for the $k$ largest eigenvalues of $H( X_i)+\sigma X_iX_i^{\T}$;
	    \STATE\label{alg:LS-scf:normalize}
	         align $\widetilde X_{i+1}$ to get $X_{i+1} \in \al{\widetilde X_{i+1}}$;
	\ENDFOR
\RETURN the last $X_i$ as a solution to NEPv~\eqref{eq:nepv2}.
\end{algorithmic}
\end{algorithm}

Let $X_*$ be a \dproper~solution of  NEPv~\eqref{eq:nepv2}.
Similarly to~\Cref{alg:scf},
we can study the local convergence of the level-shifted SCF in~\Cref{alg:LS-scf}
to $X_*$ through the aligned NEPv~\eqref{eq:alnepv}.
First, let us level-shift the aligned NEPv~\eqref{eq:alnepv} to
\begin{equation}\label{eq:gsigma}
	G_{\sigma}(X) X = X \Lambda_\sigma
	\text{\qquad with\qquad $G_{\sigma}( X):= G(X) + \sigma XX^{\T}$.}
\end{equation}
The two NEPv are equivalent in that:
$(X_*,\Lambda_*)$ is a solution to NEPv~\eqref{eq:alnepv}
$G(X)X=X\Lambda$, if and only if $(X_*, \Lambda_{\sigma*})$ with
$\Lambda_{\sigma*} = \Lambda_* + \sigma I$ is a solution to the level-shifted
NEPv~\eqref{eq:gsigma}.
Then, by a straightforward verification, the sequence of $\{X_i\}$
from~\Cref{alg:LS-scf} satisfy
\begin{equation}\label{eq:lsscf}
	G_{\sigma}(X_i) X_{i+1} = X_{i+1} \Lambda_{\sigma,i+1}
	\quad\text{for $i=0,1,\dots$},
\end{equation}
where the eigenvalues of $\Lambda_{\sigma,i+1}$ are the $k$ largest eigenvalues of
$G_{\sigma}(X_i)$. We have again assumed $X_i\in\bbod$,
which holds locally if $\cR(X_i)$ is close to $\cR(X_*)$,
as in~\eqref{eq:scf00}.
We can see that~\eqref{eq:lsscf} is exactly the plain SCF for the
level-shifted NEPv~\eqref{eq:gsigma}.
By this interpretation, we explain in the following two
major benefits for applying the level-shifted SCF in~\Cref{alg:LS-scf}.

\smallskip\noindent
{\bf Eigenvalue repositioning.}
Previously,
for the purpose of solving  optimization problem~\eqref{eq:gopt},
we assumed the desired solutions of the aligned NEPv~\eqref{eq:alnepv}
are those with $\Lambda$
corresponding to the $k$ largest eigenvalues of $G(X)$.
This assumption stems more from past research experiences on special cases of
optimization problem~\eqref{eq:gopt} \cite{wazl:2022,Zhang:2020,zhwb:2020}
than from rigorous mathematical proofs. 
For optimization problem~\eqref{eq:gopt} in
its generality, there is indeed no guarantee that $\Lambda$ has
such a property at optimality;
see, e.g.,~\cite{Bai:2018} for an example of optimization problem~\eqref{eq:gopt}
with $D=0$, where the target eigenvalue is not an extreme eigenvalue.


Having that said, we also notice that the assumption above is actually
not essential for the application of SCF iteration,
because otherwise we can level-shift  NEPv~\eqref{eq:alnepv} to NEPv~\eqref{eq:gsigma}.
By the eigendecomposition~\eqref{eq:eigdgx},
we have
\begin{equation}\label{eq:eigdgsx}
G_{\sigma}(X_*)
=
[X_*, X_{*\bot}]
\begin{bmatrix} \Lambda_* + \sigma I& \\ & \Lambda_{*\bot} \end{bmatrix}
[X_*, X_{*\bot}]^{\T}.
\end{equation}
Since $\Lambda_*$ and $\Lambda_{*\bot}$ are fixed,
each of the $k$ eigenvalues in $\Lambda_* + \sigma I$ can be made larger than
those in $\Lambda_{*\bot}$ by choosing $\sigma$ sufficiently large.
Particularly, if
\begin{equation}\label{eq:lssigma}
	\sigma > \lambda_{\max}(G(X_*))- \lambda_{\min}(G(X_*)),
\end{equation}
then the eigenvalues of $\Lambda_{\sigma*}= \Lambda_* + \sigma I$ always consist of the $k$
largest eigenvalues of $G_{\sigma}(X_*)$.
In this case, we can apply the plain SCF~\eqref{eq:lsscf}
to the level-shifted NEPv~\eqref{eq:gsigma},
using the top $k$ eigenvalues.
A similar idea of using level-shifting to reposition the desired eigenvalues of NEPv
has also been explored in~\cite{Bai:2018}.

\smallskip\noindent
{\bf Convergence of level-shifted SCF.}
As another important property, level-shifting can also fix the potential
divergence issue of the plain SCF. It is well-known that, under mild assumptions,
the level-shifted SCF is always locally convergent
when $\sigma$ is sufficiently large for unitarily invariant NEPv;
see, e.g.,~\cite{Bai:2022,Cances:2010}.
For \Cref{alg:LS-scf},
due to its equivalence to the plain level-shifted SCF in~\eqref{eq:lsscf}
for the aligned NEPv, we expect such convergence property to still hold.

We first consider the local convergence rate of \Cref{alg:LS-scf}.
Similarly to $\scrL$ in~\eqref{eq:lz},
we define  linear operator
$\scrL_\sigma \colon \mathbb R^{(n-k)\times k}\to \bbR^{(n-k)\times k}$
for the level-shifted NEPv~\eqref{eq:gsigma} at
a \dproper~solution $X_*$ as
\begin{equation}\label{eq:lzs}
	\scrL_\sigma(Z) := S_\sigma(X_*) \odot \left( X_{*\bot}^{\HH}\ \cdot \mathbf
	DG_\sigma(X_*)[X_{*\bot}Z]\cdot X_*\right),
\end{equation}
where $S_\sigma(X_*) \in\mathbb R^{(n-k)\times k}$
is with entries $[S_\sigma(X_*)]_{ij} = \left( \lambda_j - \lambda_{k+i} +\sigma
\right)^{-1}$,  recalling the eigen-decomposition~\eqref{eq:eigdgx}.
It follows from~\eqref{eq:gsigma} that $\mathbf DG_\sigma(X_*)[Y] \equiv \mathbf
DG(X_*)[Y] + \sigma (X_*Y^{\T} + YX_*^{\T})$,
and hence
\begin{equation}\label{eq:lzsq}
	\scrL_\sigma(Z)
	= S_\sigma(X_*) \odot \cQ(Z)  + Z,
\end{equation}
where $\cQ\colon \bbR^{(n-k)\times k}\to \bbR^{(n-k)\times k}$
is a linear operator given by
\begin{align}\label{eq:qop}
	\cQ(Z)= \Lambda_{*\bot} Z - Z\Lambda_* + X_{\bot*}^{\T}\D
	G(X_*)[X_{*\bot} Z]X_*,
\end{align}
similarly to~\cite[Theorem 5.1]{Bai:2022}.
By the convergence analysis in~\cite{Bai:2022},
as also discussed in~\Cref{sec:local},
the spectral radius $\rho(\scrL_\sigma)$ gives the local convergence rate of
the plain level-shifted SCF~\eqref{eq:lsscf} and, thus, that of~\Cref{alg:LS-scf} as well.
To guarantee local convergence of~\Cref{alg:LS-scf} to $X_*$, we
hence need $\rho(\scrL_\sigma)<1$.

Let us establish a sufficient condition on $\sigma$ for $\rho(\scrL_\sigma)<1$ to hold.
We observe that the scaling matrix $S_\sigma(X_*)$ in~\eqref{eq:lzsq}
has positive entries that go to $0$ as $\sigma\to +\infty$.
Therefore, assuming $\cQ(Z)$ is negative definite in the sense that
\begin{equation}\label{eq:negdef}
	\tr(\,Z^{\T}\cQ(Z)\,) < 0
	\qquad\text{for all nonzero $Z\in\bbR^{(n-k)\times k}$},
\end{equation}
we will have $\scrL_\sigma(Z)<1$ for sufficiently large $\sigma$,
noticing that $\scrL_\sigma(\,\cdot\,)$ approaches the identity operator as $\sigma\to\infty$.
Precisely, under condition~\eqref{eq:negdef},
it will hold
\begin{equation}\label{eq:sigma}
	\scrL_\sigma(Z)<1
	\quad \text{for all}\quad
	\sigma > \sigma_L:=-\frac{\mu_{\min}}{2} - \left[\lambda_{k}(G(X_*)) -
	\lambda_{k+1}(G(X_*))\right],
\end{equation}
where $\mu_{\min}<0$ is the smallest eigenvalue of  linear operator
$\cQ$ in~\eqref{eq:qop}.
The result~\eqref{eq:sigma} is essentially due
to~\cite[Theorem 5.1]{Bai:2022},
which, although established for NEPv $H(X)X=X\Lambda$ with $\Lambda$
being the smallest eigenvalues of $H(X)$, can be quickly adapted
to~\eqref{eq:alnepv} by working with $-G(X)X = X\Lambda$.

Finally, the following theorem justifies~\eqref{eq:negdef} is indeed a mild
assumption for aligned NEPv~\eqref{eq:alnepv}, since the operator
$\cQ$ in~\eqref{eq:qop} is at least negative semi-definite at a global
maximizer $X_*$ of~\eqref{eq:gopt}.
Its proof is left to~\Cref{sec:proofqdef}.

\begin{theorem}\label{thm:qdef}
    Assume $\phi$ and $\psi$ in~\eqref{eq:gopt} are twice differentiable.
	Let $X_*\in\bbO^{n\times k}$ be a
	global maximizer of~\eqref{eq:gopt} 
	and $\cQ$ be defined in~\eqref{eq:qop}.
	Then it holds that
	\begin{equation}\label{eq:qdef}
		\tr(\,Z^{\T}\cQ(Z)\,)\leq 0
		\qquad\text{for all nonzero $Z\in\bbR^{(n-k)\times k}$}.
	\end{equation}
\end{theorem}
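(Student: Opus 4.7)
The plan is to reduce \eqref{eq:qdef} to the second-order necessary optimality condition for a closely related \emph{unitarily invariant} optimization problem that shares $X_*$ as a global maximizer. I will introduce the auxiliary objective
\[
\widetilde f(X) := \phi(X) + \psi(X)\cdot \tr(M(X)),\qquad X\in\bbod,
\]
where $M(X)$ is the canonical symmetric PSD polar factor of $X^{\T}D$ from \eqref{eq:polarc0}. Since the singular values of $X^{\T}D$ (equivalently, the eigenvalues of $M(X)$) are invariant under $X\mapsto XQ$ with $Q\in\bbO^{k\times k}$, the function $\widetilde f$ is unitarily invariant on $\bbod$. Moreover, by the closed-form maximum value in \eqref{eq:maxfxs3}, for any $X\in\bbod$ and any alignment $\widetilde X\in\al{X}$ one has $\widetilde f(X) = f(\widetilde X)$. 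Combined with \Cref{thm:global}, which gives $X_*\in\bbod$ with $X_*^{\T}D\succeq 0$ (so $X_*\in\al{X_*}$ and $\widetilde f(X_*) = f(X_*)$), this yields $\widetilde f(X) = f(\widetilde X)\le f(X_*) = \widetilde f(X_*)$ for every $X\in\bbod$, i.e., $X_*$ is a global maximizer of $\widetilde f$ on $\bbod$ as well.

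The second step is to derive the gradient identity
\[
\frac{\partial \widetilde f(X)}{\partial X} = G(X)\,X - X\cdot S(X),\qquad S(X) := \psi(X)\cdot Q_{\bf o}(X)\,M(X)\,Q_{\bf o}(X)^{\T},
\]
valid on $\bbod$. I would verify this by assembling the gradient of $\widetilde f$ using \Cref{lem:dfx} (applied to the unitarily invariant $\phi$ and $\psi$) together with \Cref{lem:polar}\eqref{i:lemma:polar:c} (which gives $\partial\tr(M)/\partial X = DQ_{\bf o}^{\T}$), then comparing with the explicit form \eqref{eq:g2x} of $G$; the identity $X^{\T}D = Q_{\bf o}M$ absorbs the mismatch into the symmetric correction $X\cdot S(X)$. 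Evaluating at $X_*$, the condition $X_*^{\T}D\succeq 0$ forces $Q_{\bf o,*}M_*Q_{\bf o,*}^{\T} = X_*^{\T}D$, so $S(X_*) = \psi(X_*)\,X_*^{\T}D$, and the KKT multiplier $\widetilde\Gamma := X_*^{\T}(\partial\widetilde f/\partial X)(X_*)$ then satisfies the key algebraic identity $\widetilde\Gamma + S(X_*) = \Lambda_*$.

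Third, I apply the second-order necessary optimality condition along the Stiefel curve
\[
X(t) := (X_* + t\,X_{*\bot}Z)(I + t^2 Z^{\T}Z)^{-1/2}\in \bbO^{n\times k},
\]
which satisfies $X(0)=X_*$, $X'(0)=X_{*\bot}Z$, and $X''(0)=-X_*Z^{\T}Z$. By \Cref{lm:rank-preservation} this curve lies in $\bbod$ for $|t|$ small, so $t=0$ is a local maximum of $\widetilde f\circ X(\cdot)$ and $(d^2/dt^2)\widetilde f(X(t))|_{t=0}\le 0$. Taylor-expanding with the gradient identity above, the $X''(0)$-contribution is $\tr((-X_*Z^{\T}Z)^{\T}X_*\widetilde\Gamma) = -\tr(Z^{\T}Z\widetilde\Gamma)$, while differentiating $\partial\widetilde f/\partial X = G(X)X - X\,S(X)$ at $X_*$ along $Y=X_{*\bot}Z$ gives $\D G(X_*)[Y]X_* + G(X_*)Y - Y\,S(X_*) - X_*\,\D S(X_*)[Y]$; left-multiplying by $X_{*\bot}^{\T}$ annihilates the $X_*\,\D S$-term (since $X_{*\bot}^{\T}X_*=0$) and pairing with $Z$ produces
\[
\tr\!\left(Z^{\T}X_{*\bot}^{\T}\,\D G(X_*)[X_{*\bot}Z]\,X_*\right) + \tr(Z^{\T}\Lambda_{*\bot}Z) - \tr(Z^{\T}Z\,S(X_*)).
\]
Summing both contributions and invoking $\widetilde\Gamma + S(X_*) = \Lambda_*$ collapses the $\widetilde\Gamma$- and $S(X_*)$-terms into $-\tr(Z^{\T}Z\Lambda_*)$, so the second derivative equals exactly $\tr(Z^{\T}\cQ(Z))$ by the definition \eqref{eq:qop} of $\cQ$, which is \eqref{eq:qdef}.

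The principal obstacle is the gradient identity in the second step: verifying algebraically that the non-unitarily-invariant component of $\partial\widetilde f/\partial X$ condenses into the purely symmetric correction $X\cdot S(X)$ requires careful bookkeeping with $Q_{\bf o}$ and $M$, and the subsequent cancellation $\widetilde\Gamma + S(X_*) = \Lambda_*$ is precisely what makes the Hessian on the Stiefel manifold collapse cleanly onto $\cQ$. The conceptual crux, by contrast, is recognizing $\widetilde f$ as the correct unitarily invariant surrogate for $f$; a direct second-order argument on $f$ itself is obstructed by the lack of unitary invariance of $f$, which would entangle the Hessian with tangent variations inside the (possibly multi-valued) orbit $\al{X_*}$.
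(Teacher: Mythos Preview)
Your proposal is correct and follows essentially the same route as the paper. Both arguments introduce the unitarily invariant surrogate $\widetilde f(X)=\phi(X)+\psi(X)\,\tr(M(X))$ (the paper calls it $g$ in Appendix~\ref{sec:bimax}), observe that $X_*$ globally maximizes it, and then read off \eqref{eq:qdef} from the second-order necessary condition; the only cosmetic difference is that the paper uses the constrained Lagrangian $\cL(X,\Gamma)$ with multiplier $\Gamma_*$ and the condition $\D_1^2\cL(X_*,\Gamma_*)[E,E]\le 0$ for $E^{\T}X_*=0$, whereas you work directly on a Stiefel curve, with your identity $\widetilde\Gamma + S(X_*)=\Lambda_*$ playing the role of the paper's $\Lambda(X_*)=\Lambda_*$.
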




\section{Numerical Experiments}\label{sec:egs}
In this section, we verify  our theoretical
results  by numerical experiments. We will
compare the observed convergence rate with our theoretical estimate,
i.e., the spectral radius for the corresponding $\scrL$ operator in~\eqref{eq:lz}.
The spectral radius $\rho(\scrL)$ is computed by  MATLAB's
built-in function \texttt{eigs}, and the `exact' solution $X_*$
is computed by \Cref{alg:scf} or,
when it fails to converge, \Cref{alg:LS-scf} with a suitable level-shift $\sigma$.
Our stopping criterion is
$$
\NRes(X_i)\le \mbox{\tt tol}=10^{-13},
$$
where $\NRes(\,\cdot\,)$ is the normalized residual \eqref{eq:NRes} evaluated
at the most recent approximation using
the matrix 1-norm (i.e., the maximum column sum in absolute value) for computational convenience.
We refer to~\cite{Bai:2022} for more details about the computation of
$\rho(\scrL)$.
All experiments are carried out in MATLAB and run
on a Dell desktop with an Intel i9-9900K CPU and 16G memory.
	The MATLAB scripts implementing the algorithms and the data used to
	generate the numerical results in this paper
	can be accessed at \url{https://github.com/ddinglu/uinepv}.

Our testing NEPv arise from two optimization problems in the form of
\eqref{eq:gopt}, both of which are defined by
three matrices $A,\,B\in\bbR^{n\times n}$ and $D\in\bbR^{n\times k}$
along with an additional parameter,
where $A,\,B$ are symmetric and $B\succ 0$, and $k < n$.
The first optimization problem is
\begin{equation}\label{eq:maxex}
	\max_{X\in\bbO^{n\times k}}f_{\alpha} (X)
	\quad\text{with}\quad
	f_{\alpha}(X):=
	(1-\alpha)\cdot \frac{\tr(X^{\T}AX)}{\tr(X^{\T}BX)}
	+\alpha\cdot \frac{\tr(X^{\T}D)}{\sqrt{\tr(X^{\T}BX)}},
\end{equation}
where $\alpha\in[0,1]$ is a parameter, and
the second one is \eqref{eq:max} mentioned earlier at the beginning of this paper:
\begin{equation}\label{eq:max'}
	\max_{X\in\bbO^{n\times k}} f_{\theta} (X)
	\quad\text{with}\quad
	f_{\theta}(X):=
	\frac{\tr(X^{\T}AX + X^{\T}D)}{[\tr(X^{\T}BX)]^\theta},
\end{equation}
where $\theta$ is a parameter. Problem~\eqref{eq:max'} has been studied
in \cite{wazl:2022}, where
\Cref{alg:scf} is proved to be linearly convergent  from any initial guess,
when $\theta\in\{0,1\}$, or $0<\theta<1$ and initially $\tr(X_0^{\T}AX_0 + X_0^{\T}D)\ge 0$,
but its theoretical rate of convergence was not
investigated. We will fill this gap and also demonstrate what could happen
when $\theta$ is outside of the interval $[0,1]$.

With $D=0$ and $\theta=1$,
both  problems  degenerate to
the same one --- the trace-ratio
optimization problem from the linear discriminant analysis (LDA).
For the case, the NEPv approach has been well studied
in~\cite{zhln:2010,zhln:2013}.
The corresponding NEPv~\eqref{eq:nepva} is unitarily
invariant, and~\Cref{alg:scf}, which coincides with the plain SCF
(i.e., line 3 replaced by $X_{i+1}=\widetilde X_{i+1}$),
is globally and locally quadratically convergent to the global optimal solution generically.
However, for the general cases with $\alpha D \neq 0$ in \eqref{eq:maxex}
or $\theta\ne 1$ in \eqref{eq:max'},
the two optimization problems are different,
and \Cref{alg:scf} generally loses quadratic
convergence or even possibly diverges (if without the help from level-shifting)
for the associated NEPv, as will be shown in a moment.

\subsection{NEPv from \eqref{eq:maxex}}\label{sec:egsa}
Optimization problem~\eqref{eq:maxex} is in the form of~\eqref{eq:gopt}
with  unitarily invariant functions
\begin{equation}\label{eq:funex}
	\phi(X) = (1-\alpha)\cdot \frac{\tr(X^{\T}A X)}{\tr(X^{\T}BX)}
	\qquad\text{and}\qquad
	\psi(X) = \frac{\alpha}{\sqrt{\tr(X^{\T}BX)}}.
\end{equation}
Its KKT condition, by~\Cref{thm:nepv}, is equivalent to  NEPv
\begin{subequations}\label{eq:nepva}
\begin{equation}\label{eq:nepva-1}
	H_{\alpha}(X) X = X\Lambda,
\end{equation}
where the subscript $\alpha$ indicates its dependence on
parameter $\alpha$, and
\begin{equation}\label{eq:hxex}
	H_{\alpha}(X) =
	\frac{2}{\tr(X^{\T}BX)} \left((1-\alpha)A - \phi(X)\cdot B\right)
	- \frac{\tr(X^{\T}D) \cdot \psi(X)}{\tr(X^{\T}BX)} \cdot B
	+ \psi(X)\cdot(DX^{\T} + XD^{\T}),
\end{equation}
\end{subequations}
which is derived, using
\begin{equation}\label{eq:dfex}
	\left\{
\begin{aligned}
	\frac {\partial \phi(X)}{\partial X} &= H_{\phi}(X) X \qquad\text{with}\qquad H_{\phi}(X)=
	\frac{2}{\tr(X^{\T}BX)} \Big[(1-\alpha)\cdot A - \phi(X)\cdot B\Big],\\
	\frac {\partial \psi(X)}{\partial X} &= H_{\psi}(X) X \qquad\text{with}\qquad H_{\psi}(X) =
	 \frac{-\psi(X)}{\tr(X^{\T}BX)} \cdot B.
\end{aligned}\right.
\end{equation}
By varying $\alpha\in[0,1]$, we can construct a variety of NEPv~\eqref{eq:nepva} for testing.
For $\alpha =0$, the initial guess $X_0$ of SCF
is set to the eigenvectors of the $k$ largest eigenvalues of
the linear problem $Ax=\lambda Bx$.
As $\alpha$ increases from $0$ to $1$, we use the computed solution from the previous NEPv
as a starting guess for the next one.

For the purpose of calculating the local rate of convergence, we also
obtain by straightforward derivation
\begin{equation}\label{eq:dhex}
\left\{
\begin{aligned}
	{\bf D}H_{\phi}(X)[E] &=
	-2\frac{\tr(X^{\T}BE)}{\tr(X^{\T}BX)} \cdot H_{\phi}(X)
	-2\frac{\tr(X^{\T} H_{\phi}(X) E)}{\tr(X^{\T}BX)}  \cdot B,
	\\
	{\bf D}H_{\psi}(X)[E] &= -3\frac{\tr(X^{\T} H_{\psi}(X) E)}{\tr(X^{\T}BX)} B.
\end{aligned}\right.
\end{equation}
We construct the aligned NEPv~\eqref{eq:alnepv} through alignment
and the derivative operator $\D G$ through \eqref{eq:dgx},
using the derivatives in~\eqref{eq:dhex} together with ${\bf
D}Q_{\bf o}$ and ${\bf D}M$ from~\eqref{eq:dmdq}.
The corresponding linear operator $\scrL$ by~\eqref{eq:lz} is then
obtained.

\begin{figure}[thbp]
\begin{center}
\includegraphics[width=0.44\textwidth]{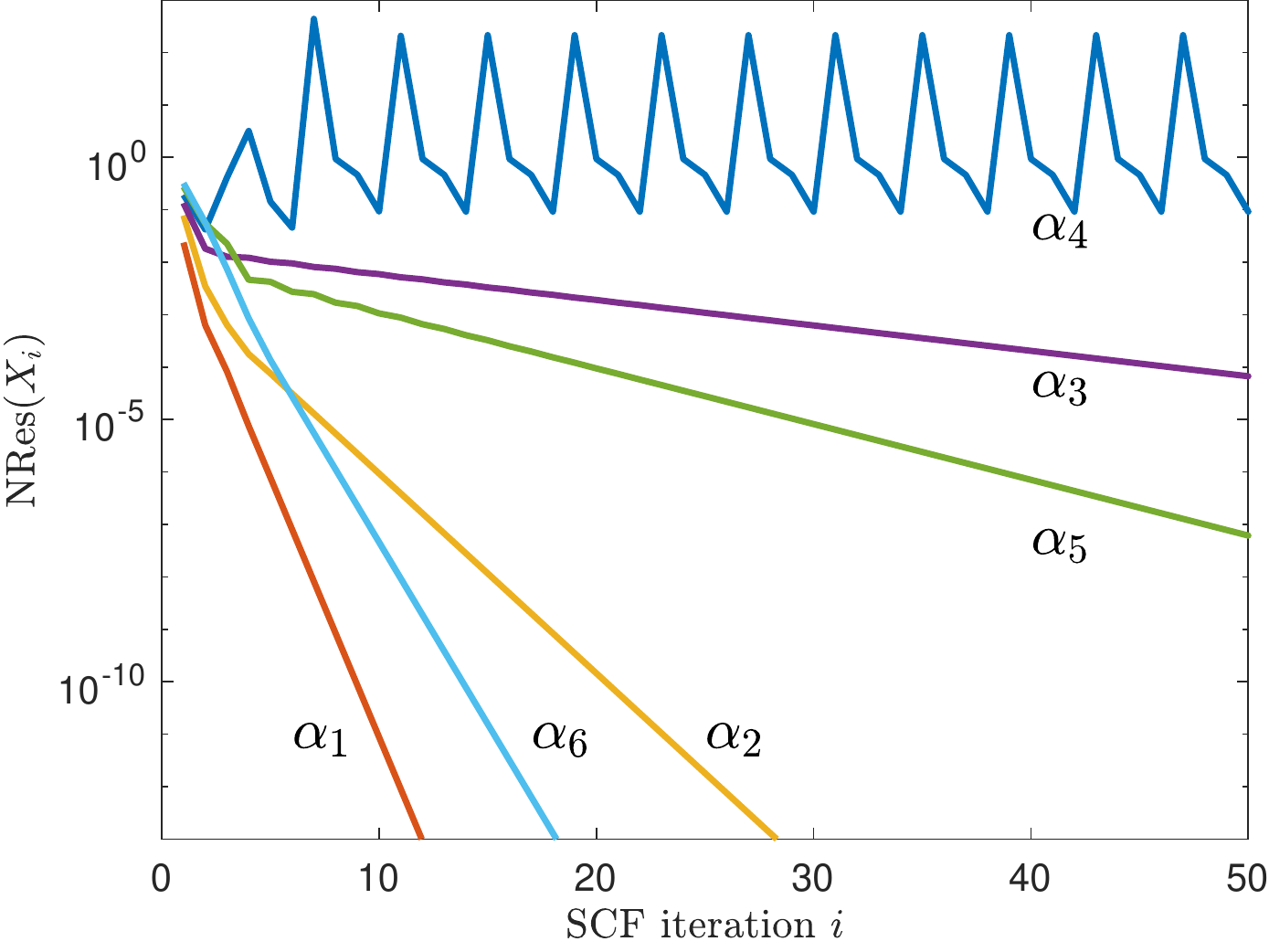}
\includegraphics[width=0.45\textwidth]{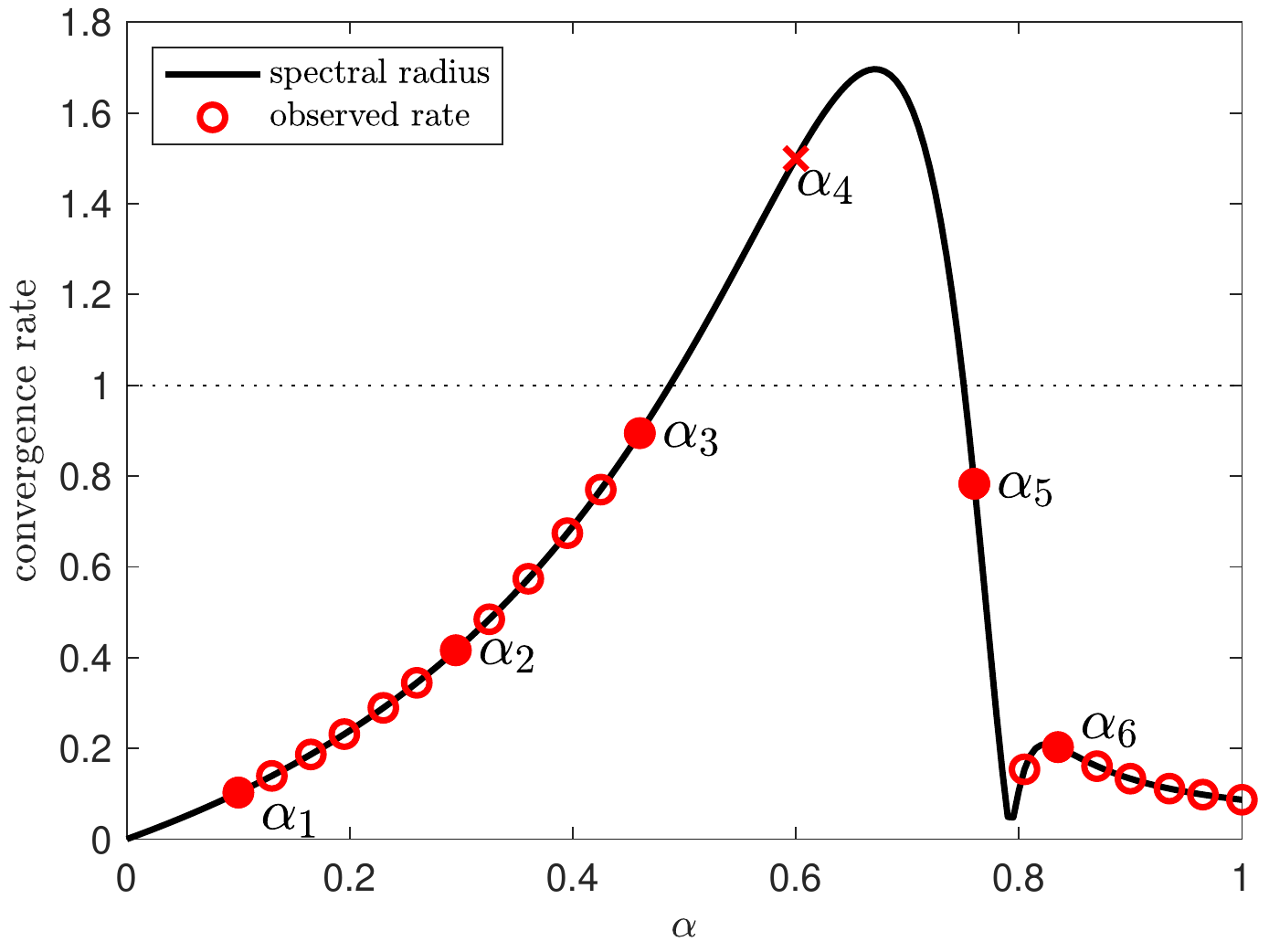}
\caption{
	\Cref{alg:scf} on Example~\ref{ex:1}.
	{\em Left:\/} The iterative history for solving NEPv~\eqref{eq:nepva} at
	a few sampled $\alpha$ (correspondingly marked as $\bullet$ and $\times$ on the right plot).
	{\em Right:\/} The curve of spectral radius $\rho(\scrL)$ as a function of
	 parameter $\alpha\in[0,1]$ (based on $200$ equally spaced $\alpha$),
	and the observed rates  of convergence (marked by $\bullet$ and $\circ$) at a number of values of $\alpha$, including those
    sampled $\alpha$ on the left plot,
	and `$\times$' indicates that SCF is divergent.
}\label{fig:rnd}
\end{center}
\end{figure}

\begin{figure}[thbp]
\begin{center}
\includegraphics[width=0.455\textwidth]{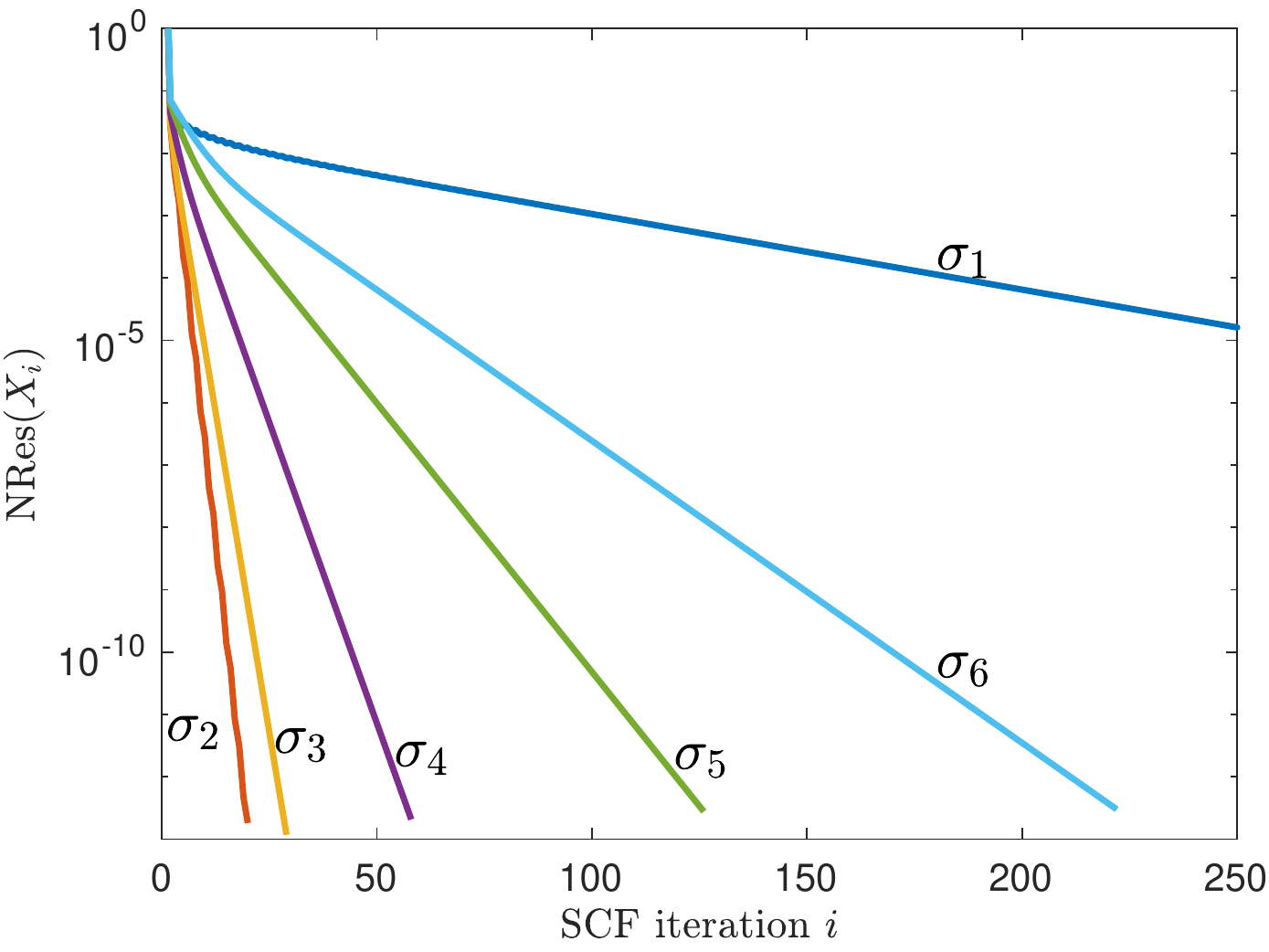}
\includegraphics[width=0.44\textwidth]{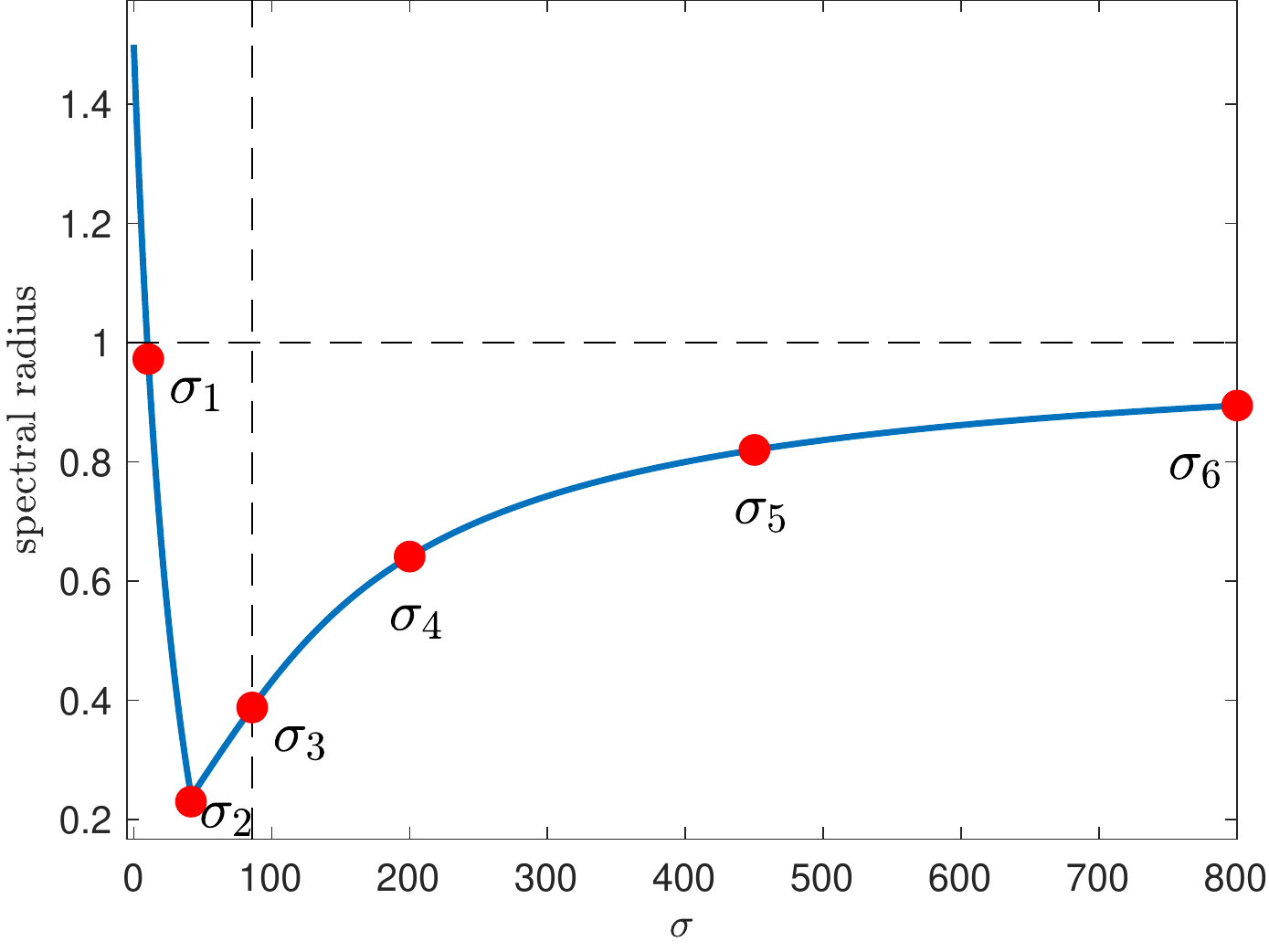}
\caption{
	\Cref{alg:LS-scf} on
	Example~\ref{ex:1} with $\alpha = 0.6$ (i.e., $\alpha_4$ in Figure~\ref{fig:rnd}, at which \Cref{alg:scf} diverges).
	{\em Left:\/} The iterative history of \Cref{alg:LS-scf} (level-shifted SCF)
	with a few sampled level-shifts $\sigma$ (correspondingly marked as $\bullet$ on the right plot).
	{\em Right:\/} The curve of spectral radius $\rho(\scrL_\sigma)$ as a function of
	the level-shift $\sigma$ and the observed rates of convergence at the sampled
	level-shifts $\sigma$. 
	The vertical dashed line corresponds to the theoretical lower bound
	$\sigma_L=85.83$  given by~\eqref{eq:sigma} (as $\sigma_3$ in the plots).
}\label{fig:rndls}
\end{center}
\end{figure}

\begin{example}\label{ex:1}
{\rm %
We consider problem~\eqref{eq:maxex} with $D\in\bbR^{n}$ being a single vector, i.e.,
$k=1$. The alignment operation at line~\ref{alg:scf:normalize} of~\Cref{alg:scf}
is simply $\widetilde X_i= \pm X_i$ with $\pm$ corresponding to
the sign of $X_i^{\T}D\in\bbR$. Letting also $n=3$,
we randomly generate
\[
A =
\left[\begin{array}{rrr}
   -3.242&   -0.450&    1.807\\
   -0.450&   -1.630&    0.790\\
    1.807&    0.790&    0.226
\end{array}\right]
,\
B =
\left[\begin{array}{rrr}
    0.592&    1.873&    0.175\\
    1.873&    6.332&    0.617\\
    0.175&    0.617&    0.488
\end{array}\right]
,\
D =
\left[\begin{array}{r}
	-9.122\\
    0.421\\
    3.134
\end{array}\right].
\]
In what follows, we examine the convergence of~\Cref{alg:scf} on NEPv \eqref{eq:nepva}
as $\alpha$ varies in $[0,1]$.

\Cref{fig:rnd} compares the observed rates of convergence of~\Cref{alg:scf}
with the theoretical rates $\rho(\scrL)$, as well as the iterative histories of
normalized NEPv residuals, at several values of $\alpha$.
The left plot shows linear convergence of~\Cref{alg:scf} at
a few sampled $\alpha\neq 0$ when \Cref{alg:scf} actually converges 
and one $\alpha$ when \Cref{alg:scf} does not. The right
plot is the curve of spectral radius $\rho(\scrL)$ as $\alpha$ varies in $[0,1]$.
We can see that:
\begin{itemize}
  \item %
	At those $\alpha$
	where \Cref{alg:scf} converges, the observed rates of convergence and
	the theoretical ones by $\rho(\scrL)$ match very well. For example,
	\[
		\mbox{at}\,\,\alpha_3 = 0.46:\qquad
		\text{observed rate} \approx 0.894493\cdots,
		\quad
		\rho(\scrL) \approx 0.894490\cdots,
	\]
	matching up to $5$ significant decimal digits.
	We  conclude the spectral radius $\rho(\scrL)$ provides
	sharp estimation for the true convergence rates.

  \item %
	 $\rho(\scrL) > 1$ approximately for $\alpha \in [0.49, 0.75]$,
	where \Cref{alg:scf} indeed diverges numerically;
	see, e.g., the convergence behavior for $\alpha_4=0.6$ on the left plot. 
	Those $\rho(\scrL)$ are calculated  with solutions
	$X_*$ to NEPv \eqref{eq:nepva} computed by
	the level-shifted SCF, \Cref{alg:LS-scf}, with $\sigma = 100$.
    It is interesting to notice that \Cref{alg:scf} converges for $\alpha$ near $0$ and $1$ but diverges
    in the middle. This can be explained. In fact, optimization problem \eqref{eq:maxex} for $\alpha=0$
    is the same as the one from LDA \cite{zhln:2010,zhln:2013}, while for
    $\alpha=1$ it is the OCCA subproblem \cite{zhwb:2020}. For both cases,
    \Cref{alg:scf} provably converges.

  \item %
	$\rho(\scrL)\to 0$ as $\alpha \to  0$,
	indicating superlinear convergence of the algorithm at $\alpha = 0$,
	consistent with the fact that~\Cref{alg:scf} is quadratically convergent
	in such a case~\cite{zhln:2013}.
	At the other end $\alpha=1$, the spectral radius is
	a small number $\rho(\scrL)\approx  0.086$, indicating rapid linear
	convergence of~\Cref{alg:scf}.
\end{itemize}

To demonstrate the effectiveness of level-shifting,~\Cref{fig:rndls}
shows the convergence of level-shifted SCF (\Cref{alg:LS-scf})
on NEPv~\eqref{eq:nepva} for $\alpha = 0.6$ (i.e., $\alpha_4$ in Figure~\ref{fig:rnd}), at which \Cref{alg:scf} diverges.
The left plot of \Cref{fig:rndls} illustrates the linear convergence of \Cref{alg:LS-scf} at
various level-shifts $\sigma$,  where a proper choice of
$\sigma$ can lead to a significant acceleration of convergence for SCF.
The right plot shows the spectral radius
of $\scrL_\sigma$ by~\eqref{eq:lzs} 
as  level-shift $\sigma$ varies,
where the optimal level-shift $\sigma_* \approx 41.88$
with the minimal value $\rho(\scrL_\sigma) \approx 0.239$.
It is observed that $\rho(\scrL_\sigma) < 1$ for all level-shifts $\sigma \gtrsim 9.87 $, but
the theoretical lower bound in~\eqref{eq:sigma} can
only 
guarantee that $\rho(\scrL_\sigma) < 1$ and that~\Cref{alg:LS-scf} is locally convergent
when level-shift $\sigma\gtrsim\sigma_L=85.83$, greatly overestimating the
observed one. 
} %
\end{example}

\begin{figure}[thbp]
\begin{center}
\includegraphics[width=0.45\textwidth]{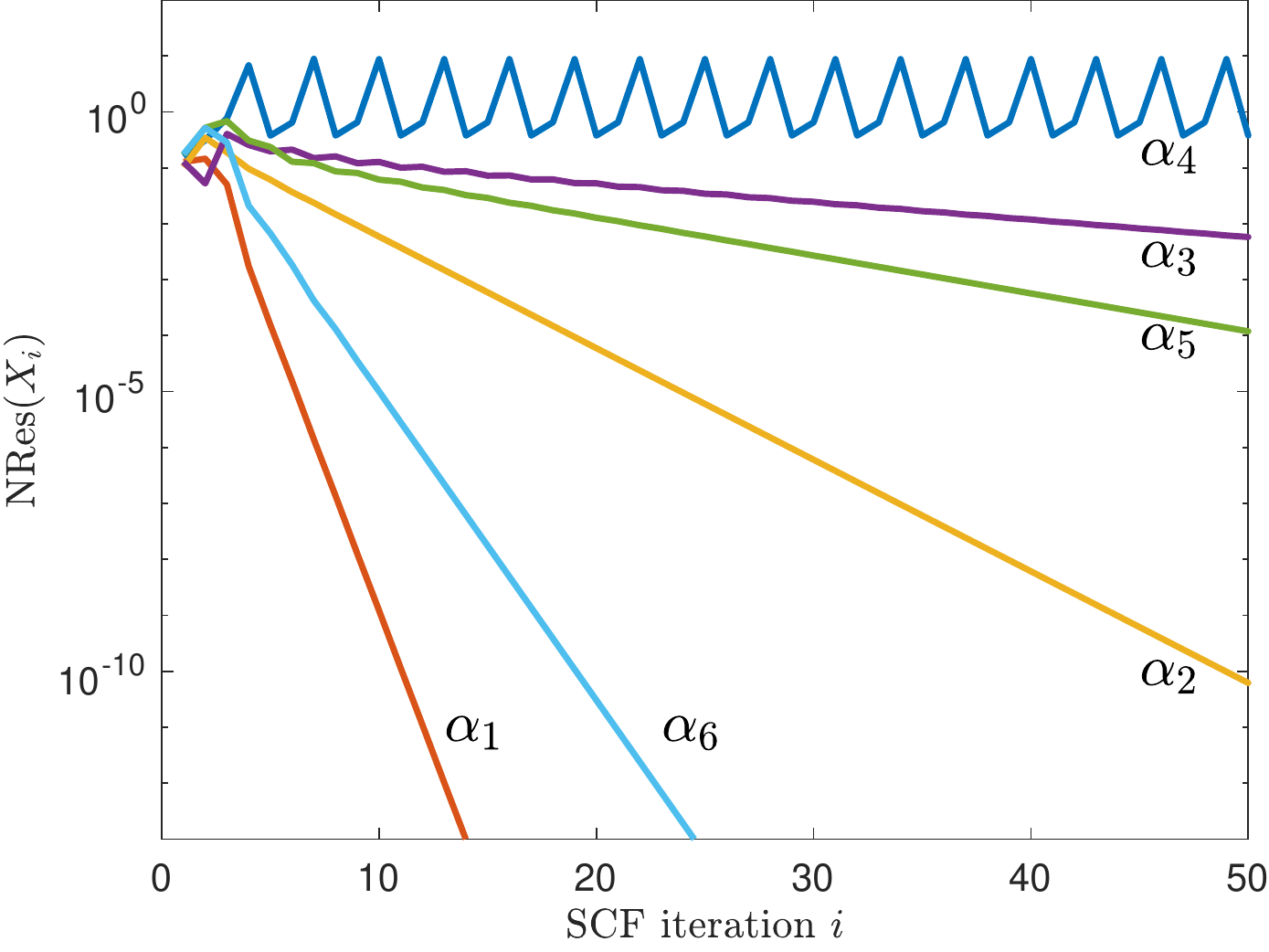}
\includegraphics[width=0.46\textwidth]{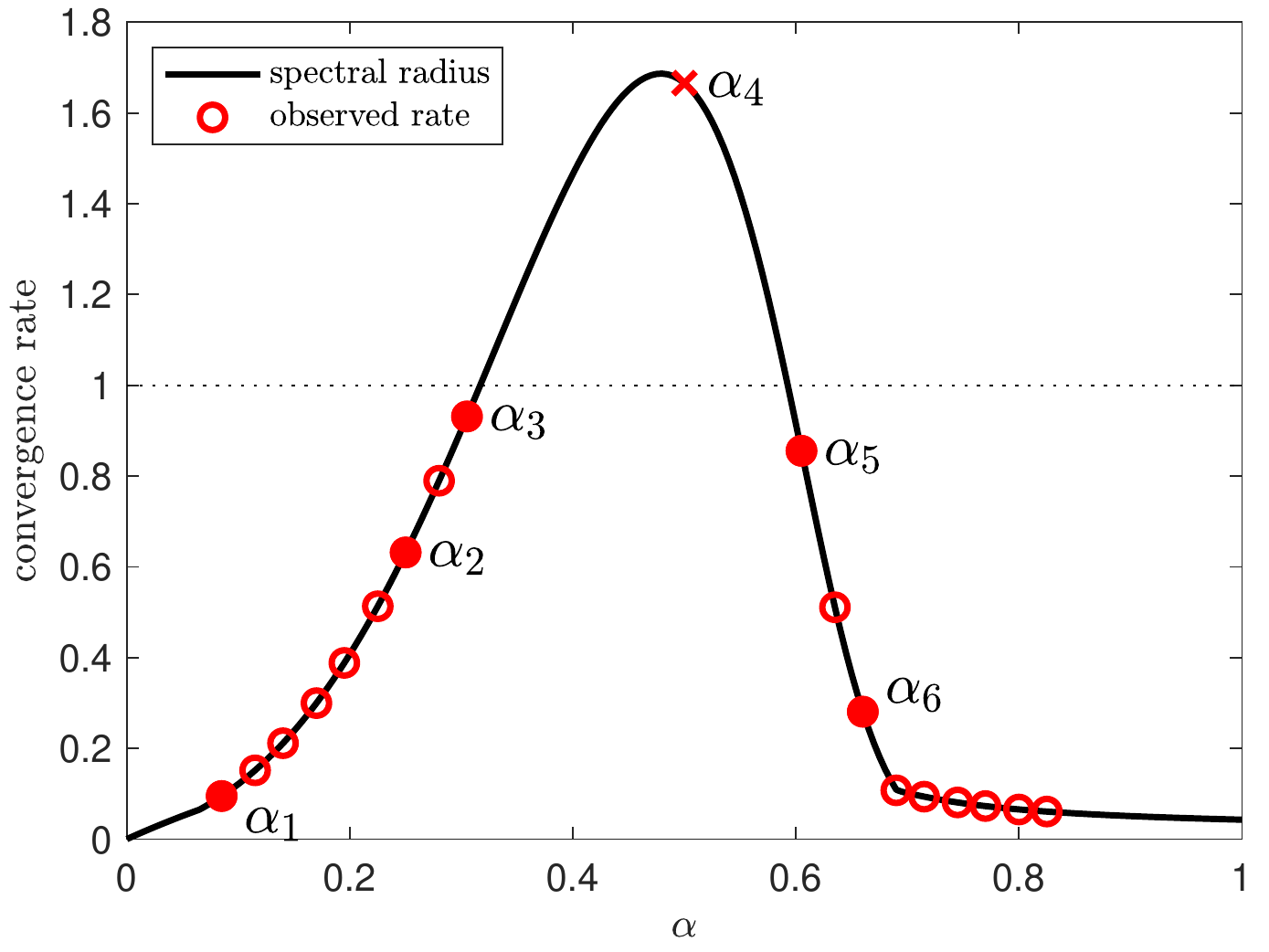}
\caption{
	\Cref{alg:scf} on Example~\ref{ex:2}.
	{\em Left:\/} The iterative history for solving NEPv~\eqref{eq:nepva} at
	a few sampled $\alpha$ (correspondingly marked as  $\bullet$ and $\times$ on the right plot).
	{\em Right:\/} the curve of spectral radius $\rho(\scrL)$ as a function of
	 parameter $\alpha\in[0,1]$ (based on $200$ equally spaced $\alpha$),
	and the observed rates of convergence (marked by $\bullet$ and $\circ$) at a number of values of $\alpha$, including those
    sampled $\alpha$ on the left plot,
	and `$\times$' indicates that SCF is divergent.
}\label{fig:rndk2}
\end{center}
\end{figure}

\begin{figure}[thbp]
\begin{center}
\includegraphics[width=0.455\textwidth]{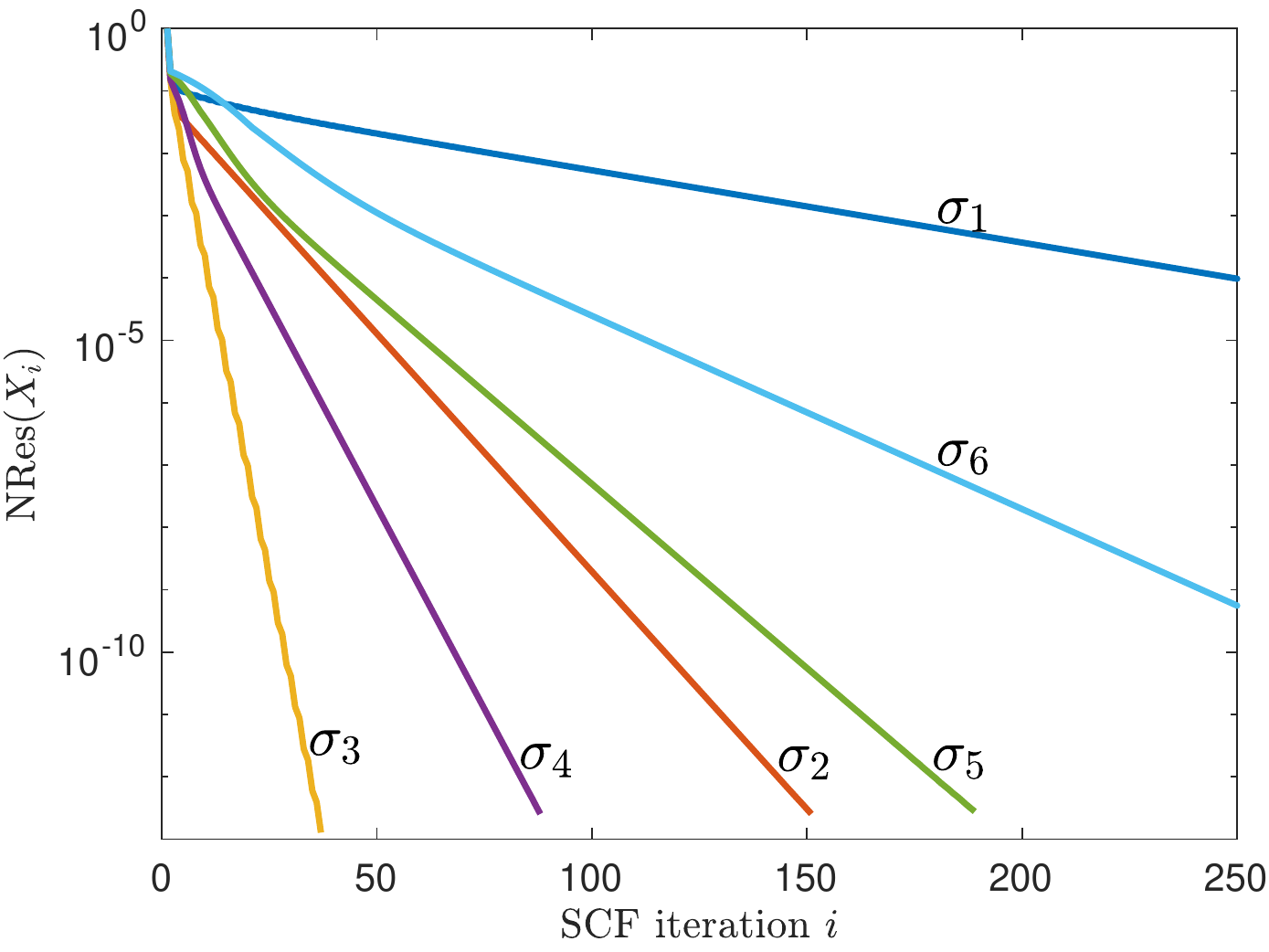}
\includegraphics[width=0.44\textwidth]{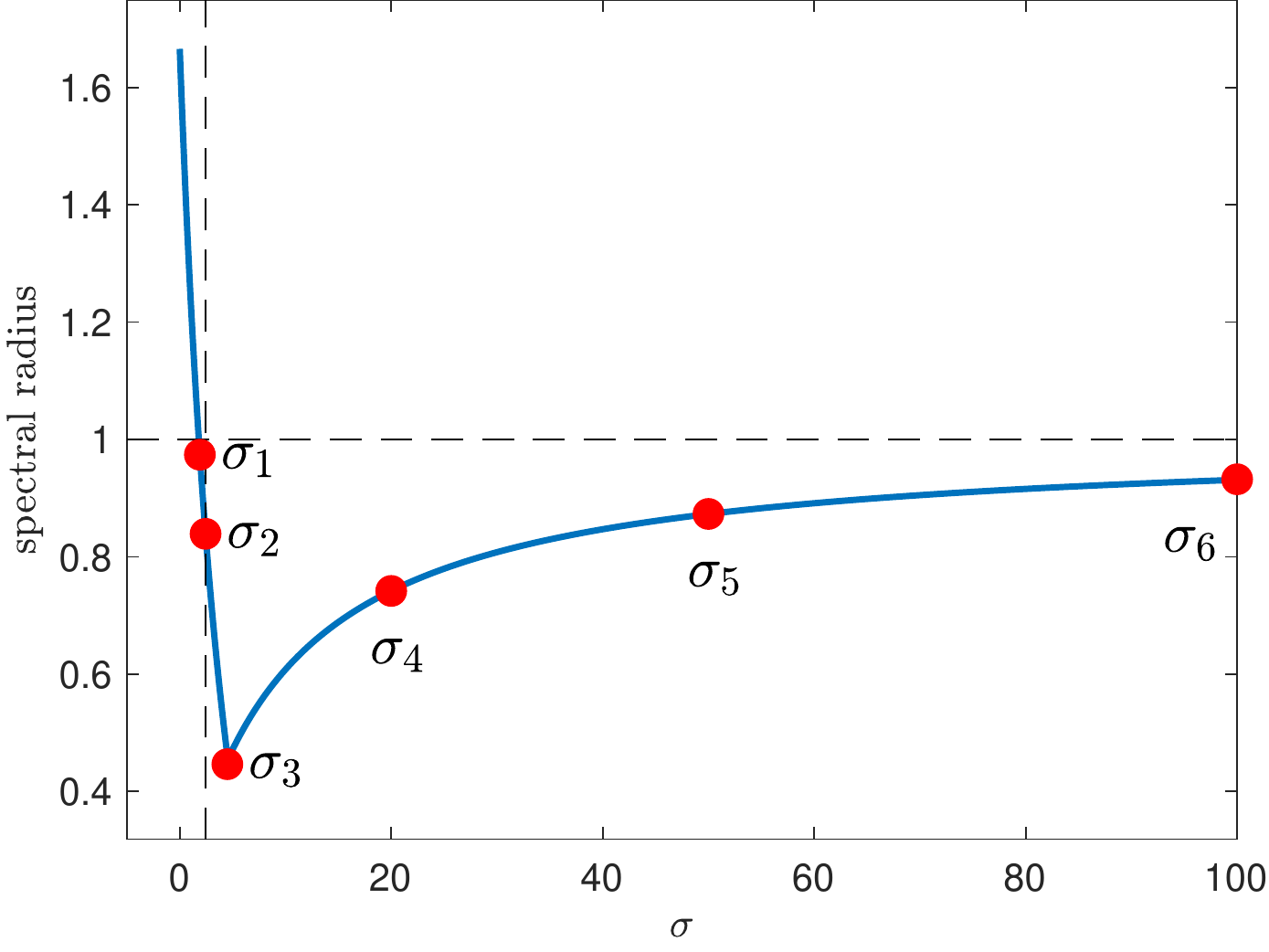}
\caption{
	\Cref{alg:LS-scf} on
	Example~\ref{ex:2} with $\alpha = 0.5$ (i.e., $\alpha_4$ in Figure~\ref{fig:rndk2}, at which \Cref{alg:scf} diverges).
	{\em Left:\/} The iterative history of \Cref{alg:LS-scf} (level-shifted SCF)
	with a few sampled level-shifts $\sigma$ (correspondingly marked as $\bullet$ on the right plot).
	{\em Right:\/} the curve of spectral radius $\rho(\scrL_\sigma)$ as a function of
	the level-shift $\sigma$ and the observed rates of convergence at the sampled
	level-shifts $\sigma$. 
	The vertical dashed line corresponds to the theoretical lower bound
	$\sigma_L=2.44$ given by~\eqref{eq:sigma} (as $\sigma_2$ in the plots).
}\label{fig:rndlsk2}
\end{center}
\end{figure}

\begin{example}\label{ex:2}
{\rm %
This is an almost repeat of~\Cref{ex:1},
with the same testing matrices $A$ and $B$, except $k=2$ and
\[
	D =
\left[\begin{array}{rr}
  -1.430&  2.768\\
  -0.120& -0.630\\
   1.098&  2.229
\end{array}\right]\in\bbR^{3\times 2}.
\]
Since $k=2$, the alignment at line~\ref{alg:scf:normalize} of~\Cref{alg:scf}
does require the SVD (or the polar decomposition) of $X_i^{\T}D$.
\Cref{fig:rndk2} shows testing results by~\Cref{alg:scf} for several values of $\alpha\in[0,1]$.
We observe a similar performance of the algorithm
as in~\Cref{fig:rnd}.
We again find that $\rho(\scrL)$ provides a sharp estimation for the convergence rates.
For example,
\[
	\mbox{at}\,\,\alpha_3 = 0.305:\qquad
	\text{observed rate} \approx 0.930833\cdots,
	\quad
	\rho(\scrL) \approx 0.930798\cdots.
\]
From the spectral radius curve as a function of $\alpha$, it is concluded that
\Cref{alg:scf} converges rapidly for $\alpha$ at
both ends of the interval $[0,1]$, but fails to converge for $\alpha$ in the
middle (approximately from $0.32$ to $0.59$),
where the spectral radius $\rho(\scrL)>1$
(those $\rho(\scrL)$ are calculated using the computed solution $X_*$
by \Cref{alg:LS-scf}  with $\sigma = 50$).
Next, we test the level-shifted SCF, \Cref{alg:LS-scf}, on
the NEPv with $\alpha = 0.5$ (i.e., $\alpha_4$ in Figure~\ref{fig:rndk2}), at which \Cref{alg:scf} diverges. As reported in~\Cref{fig:rndlsk2},
the optimal level-shift occurs at $\sigma_* \approx 4.57$ with
the spectral radius $\rho(\scrL_\sigma) \approx 0.455$.
For this NEPv, it is guaranteed that $\rho(\scrL_\sigma) < 1$ with any level-shift
$\sigma\gtrsim\sigma_L=2.44$ according to the theoretical lower bound~\eqref{eq:sigma}
 (as $\sigma_2$ in both left and right plots).
}
\end{example}

\begin{figure}[thbp]
\begin{center}
\includegraphics[width=0.45\textwidth]{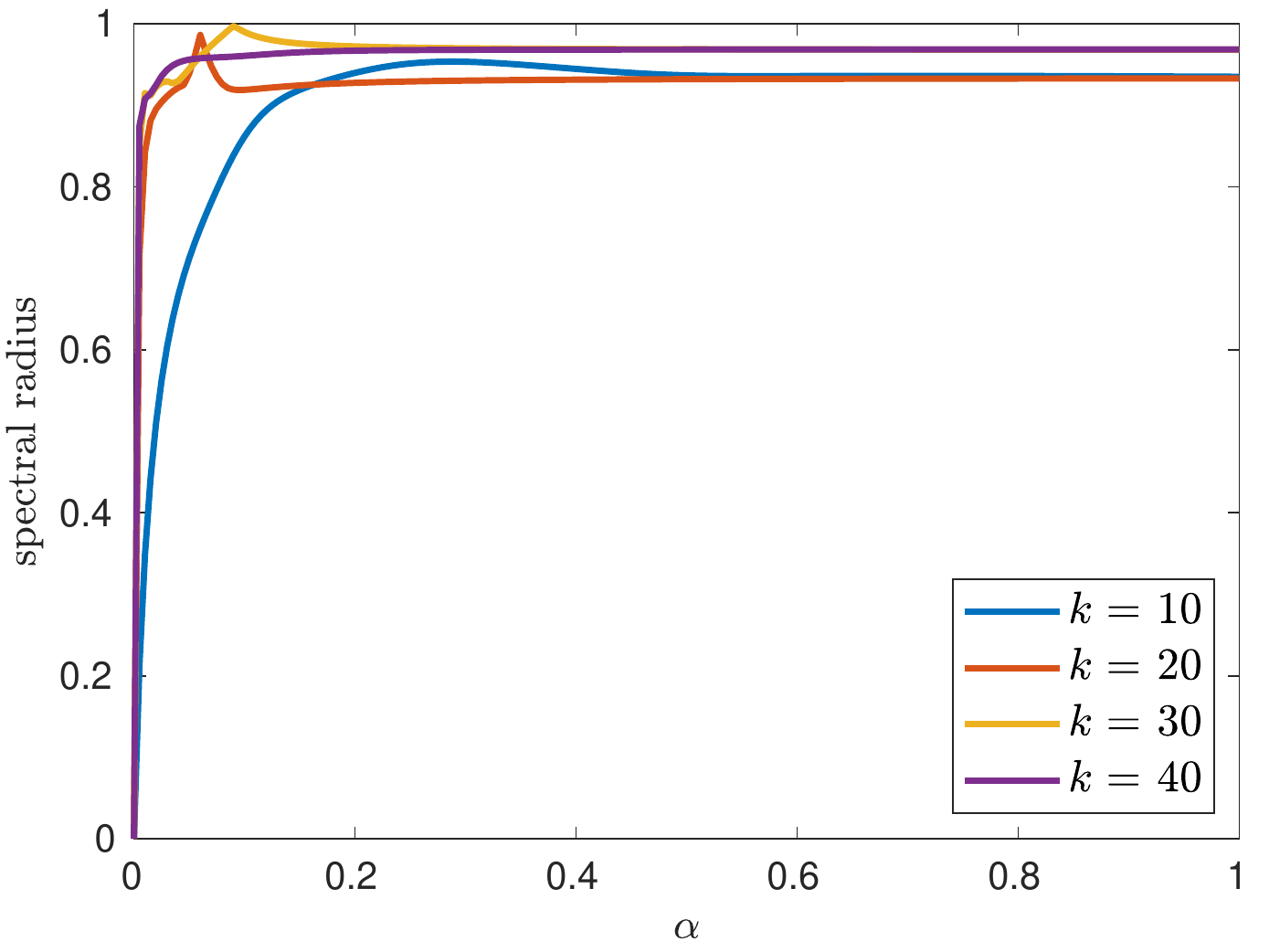}
\includegraphics[width=0.45\textwidth]{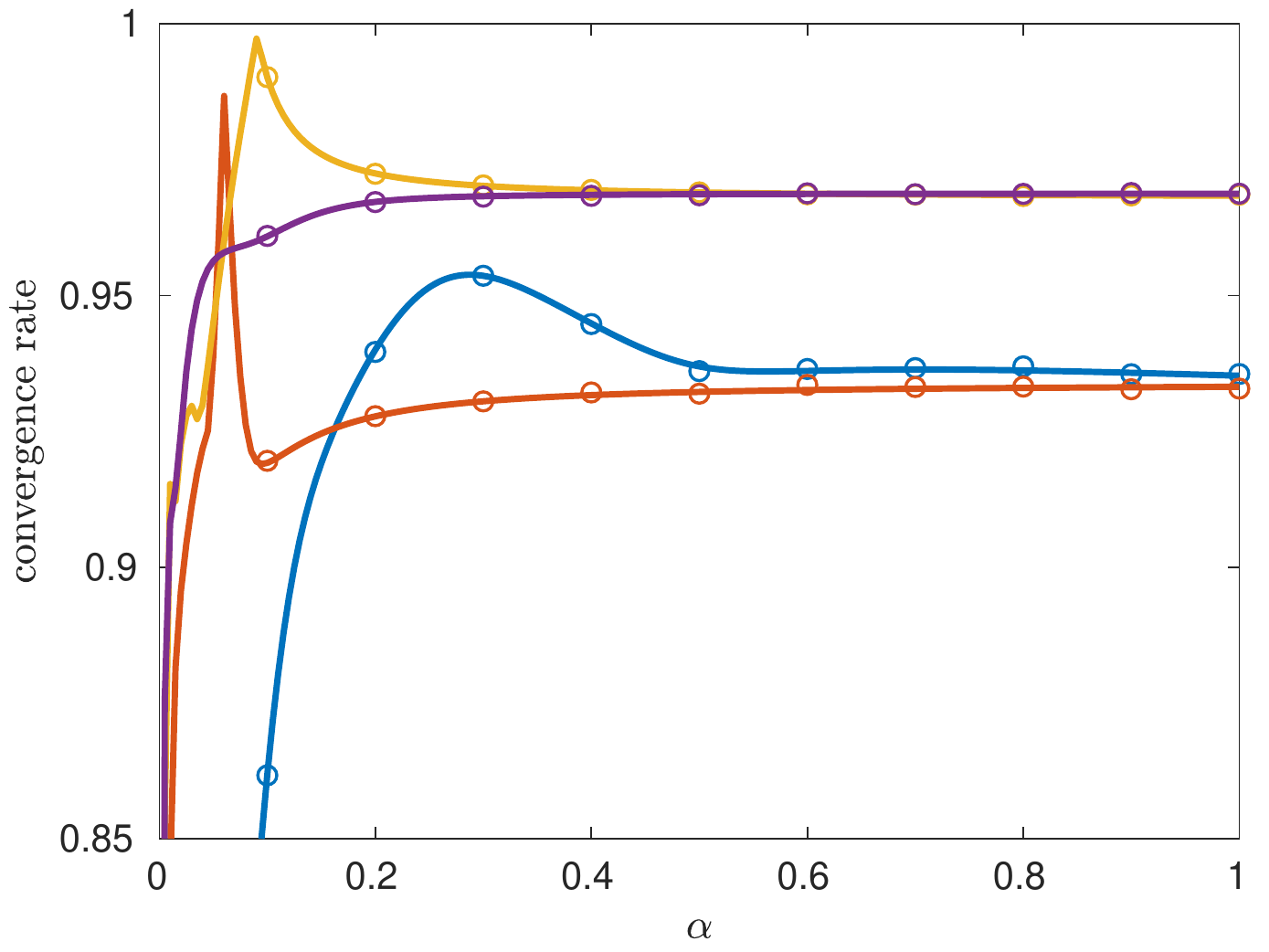}
\caption{\Cref{alg:scf} on Example~\ref{ex:3} with full-rank $D\in \bbR^{n\times k}$.
	{\em Left:\/} The curves of spectral radius $\rho(\scrL)$ as a function of $\alpha\in[0,1]$
    for different $k$ (based on $200$ equally spaced $\alpha$).
	{\em Right:\/} The observed rates of convergence of~\Cref{alg:scf} at a few sampled
	$\alpha$ (marked by $\circ$), zoomed in  for rates in $[0.85,1]$ for readability.
}\label{fig:ex2:full}
\end{center}
\end{figure}

\begin{figure}[thbp]
\begin{center}
\includegraphics[width=0.45\textwidth]{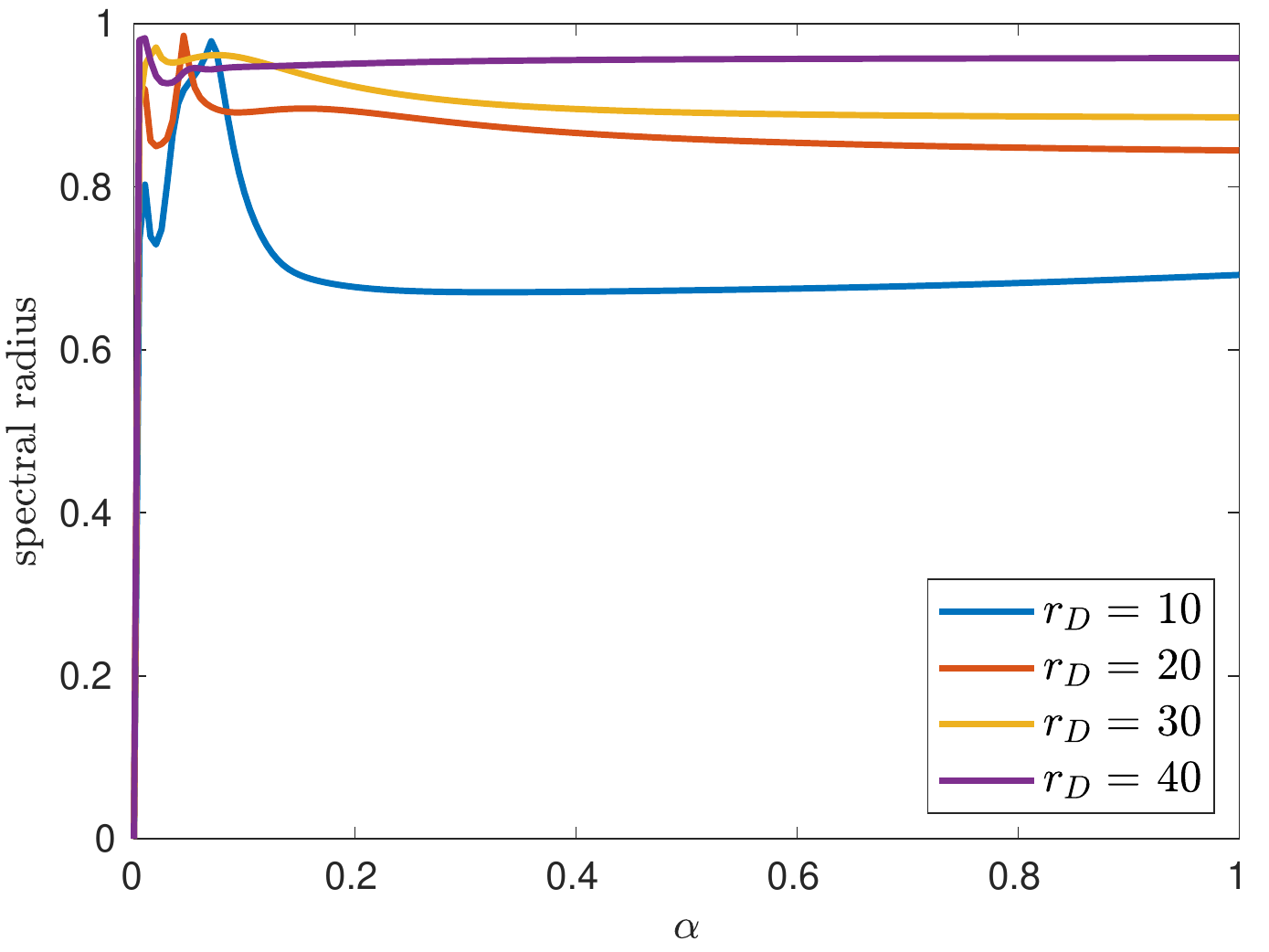}
\includegraphics[width=0.45\textwidth]{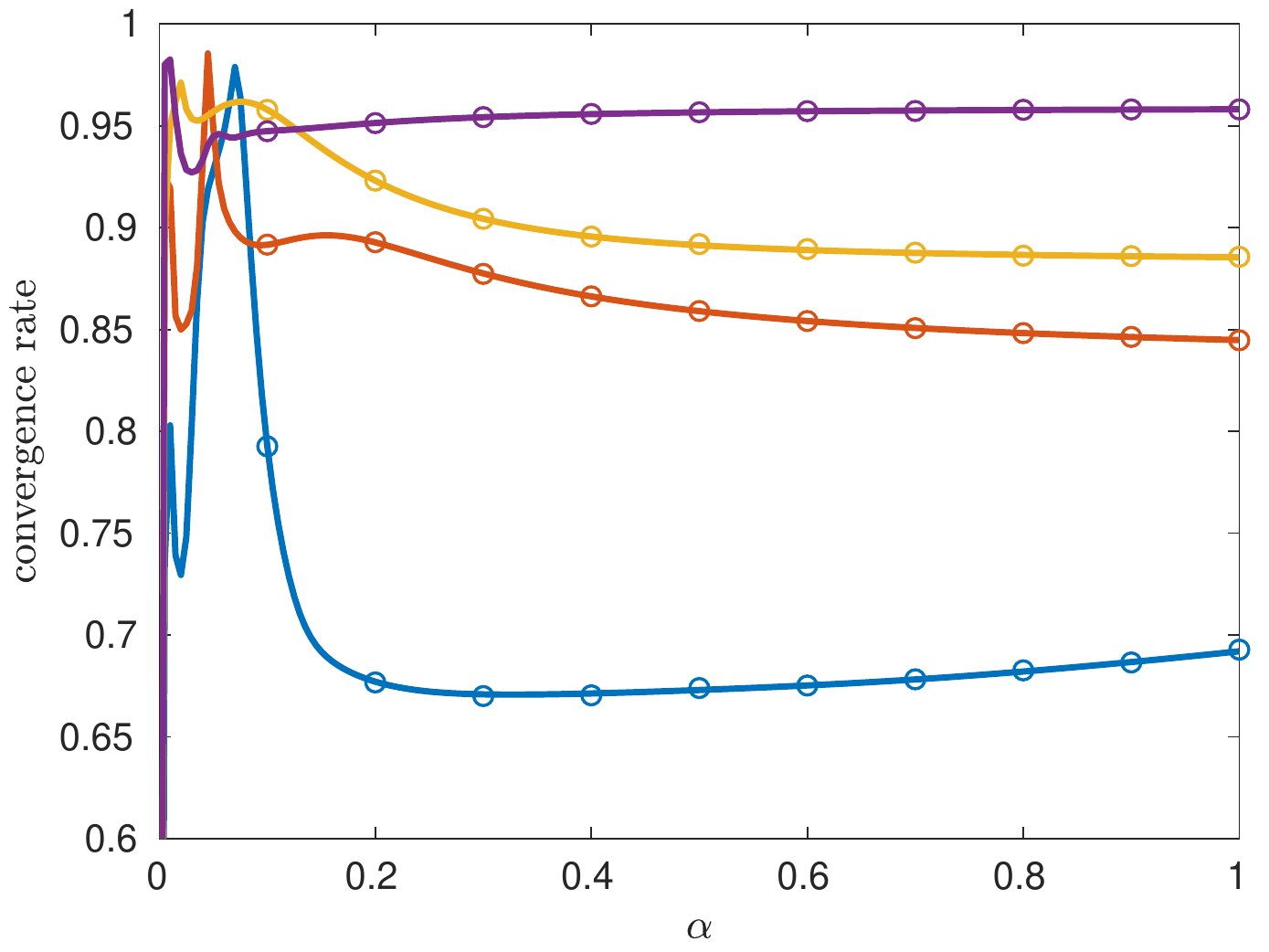}
\caption{\Cref{alg:scf} on Example~\ref{ex:3}\ with rank-deficient $D\in \bbR^{n\times k}$
	with  $k=50$ and different $\rkd\equiv\mrank(D)$. 
	{\em Left:\/} The curves of spectral radius $\rho(\scrL)$ as a function of $\alpha\in[0,1]$
	(based on $200$ equally spaced $\alpha$).
	{\em Right:\/} The observed rates of convergence of~\Cref{alg:scf} at sampled
	$\alpha$ (marked by $\circ$), zoomed in  for rates in $[0.6,1]$ for readability.
}\label{fig:ex2:singular}
\end{center}
\end{figure}

\begin{example}\label{ex:3}
{\rm %
We now consider larger $n$ and more general
$D\in\bbR^{n\times k}$ that is possibly rank-deficient unlike 
the previous examples.
Set $A=\mbox{tridiag}(-1,2,-1)\in\bbR^{n\times n}$, a symmetric
tridiagonal matrix, and $B=\mbox{diag}(1,2,\dots,n)$, and
$n=200$.

In the first experiment, we consider full-rank $D\in\bbR^{n\times k}$,
generated by the MATLAB function $\texttt{randn}(n,k)$.
We test $k=10,20,30,40$, and for each $k$ we repeat the experiment as in the
previous examples, i.e.,  varying  parameter $\alpha$ from $0$ to $1$.
The results are summarized in~\Cref{fig:ex2:full},
where~\Cref{alg:scf} is locally convergent for all testing cases.
In the left plot,
the spectral radius moves from $0$ to close to $1$ as $\alpha$ increases,
showing that SCF quickly loses superlinear convergence with a tiny increase of
$\alpha$ from $0$, and for $\alpha\approx 1$,  \Cref{alg:scf} converges linearly.
Overall, the convergence appears slower than that in~\Cref{fig:rnd,fig:rndls} for
the previous examples,
and the convergence rate seems not so sensitive to varying $\alpha$.
The right plot again shows that the spectral radius provides sharp estimation for
the convergence rate, justifying our theoretical analysis
in~\Cref{sec:local}.

In the second experiment, we consider rank-deficient
$D\in\bbR^{n\times k}$ with $k=50$ fixed, constructed as
$D = D_1 P^{\T}$ with
randomly generated $D_1\in\bbR^{n\times \rkd}$ and  $P\in\bbR^{k\times \rkd}$ that has orthonormal columns.
We test $\rkd = 10,20,30,40$ and repeat the
experiment.
From the results in~\Cref{fig:ex2:singular},
the spectral radius still provides sharp estimation for
the convergence rate, justifying the analysis in~\Cref{sec:local}
for rank-deficient $D$.


}
\end{example}

\subsection{NEPv from \eqref{eq:max'}} \label{sec:egsb}
Optimization problem~\eqref{eq:max'} is another example
of~\eqref{eq:gopt},
and it has the unitarily invariant functions
\begin{equation}\label{eq:funex'}
	\phi(X) =
	\frac{\tr(X^{\T}AX)}{[\tr(X^{\T}BX)]^\theta}
	\qquad\text{and}\qquad
	\psi(X) =
	\frac{1}{[\tr(X^{\T}BX)]^\theta}.
\end{equation}
By~\Cref{thm:nepv}, its KKT condition is equivalent to  NEPv
\begin{subequations}\label{eq:nepva'}
\begin{equation}\label{eq:nepva'-1}
	H_{\theta}(X) X = X\Lambda,
\end{equation}
where the subscript $\theta$ indicates its dependence on
 parameter $\theta$, and
\begin{equation}\label{eq:hxex'}
	H_{\theta}(X) =
	2\psi(X)\cdot \Big[A - \theta\cdot \frac{\tr(X^{\T}AX)}{\tr(X^{\T}BX)} \cdot B\Big]
	- 2\theta \frac{\tr(X^{\T}D) \cdot \psi(X)}{\tr(X^{\T}BX)} \cdot B
	+ \psi(X)\cdot(DX^{\T} + XD^{\T}),
\end{equation}
\end{subequations}
which is obtained, using
\begin{equation}\label{eq:dfex'}
	\left\{
\begin{aligned}
	\frac {\partial \phi(X)}{\partial X} &= H_{\phi}(X) X \qquad\text{with}\qquad H_{\phi}(X)=
	2\psi(X)\cdot \Big[A - \theta\cdot \frac{\tr(X^{\T}AX)}{\tr(X^{\T}BX)} \cdot B\Big],\\
	\frac {\partial \psi(X)}{\partial X} &= H_{\psi}(X) X \qquad\text{with}\qquad H_{\psi}(X) =
	-2\theta \cdot \frac{\psi(X)}{\tr(X^{\T}BX)} \cdot B.
\end{aligned}\right.
\end{equation}
By varying $\theta$, we construct a variety of
NEPv~\eqref{eq:nepva'} for testing.

For the purpose of calculating the local rate of convergence, we also obtain
\begin{equation}\label{eq:dhex'}
\left\{
\begin{aligned}
	{\bf D}H_{\phi}(X)[E] &=
	-2\theta \frac{\tr(X^{\T}BE)}{\tr(X^{\T}BX)} \cdot H_{\phi,1}(X)
	-2\theta\frac{\tr(X^{\T} H_{\phi}(X) E)}{\tr(X^{\T}BX)}  \cdot B,
	\\
	{\bf D}H_{\psi}(X)[E] &= -2(\theta +1)\frac{\tr(X^{\T} H_{\psi}(X)
	E)}{\tr(X^{\T}BX)} B,
\end{aligned}\right.
\end{equation}
where $H_{\phi,1}$ is $H_{\phi}$ in~\eqref{eq:dfex'} with
$\theta=1$.
We can then obtain the corresponding aligned NEPv~\eqref{eq:alnepv}
and  linear operator $\scrL$ by~\eqref{eq:lz} in
the same way as in~\Cref{sec:egsa}.


\begin{figure}[thbp]
\begin{center}
\includegraphics[width=0.45\textwidth]{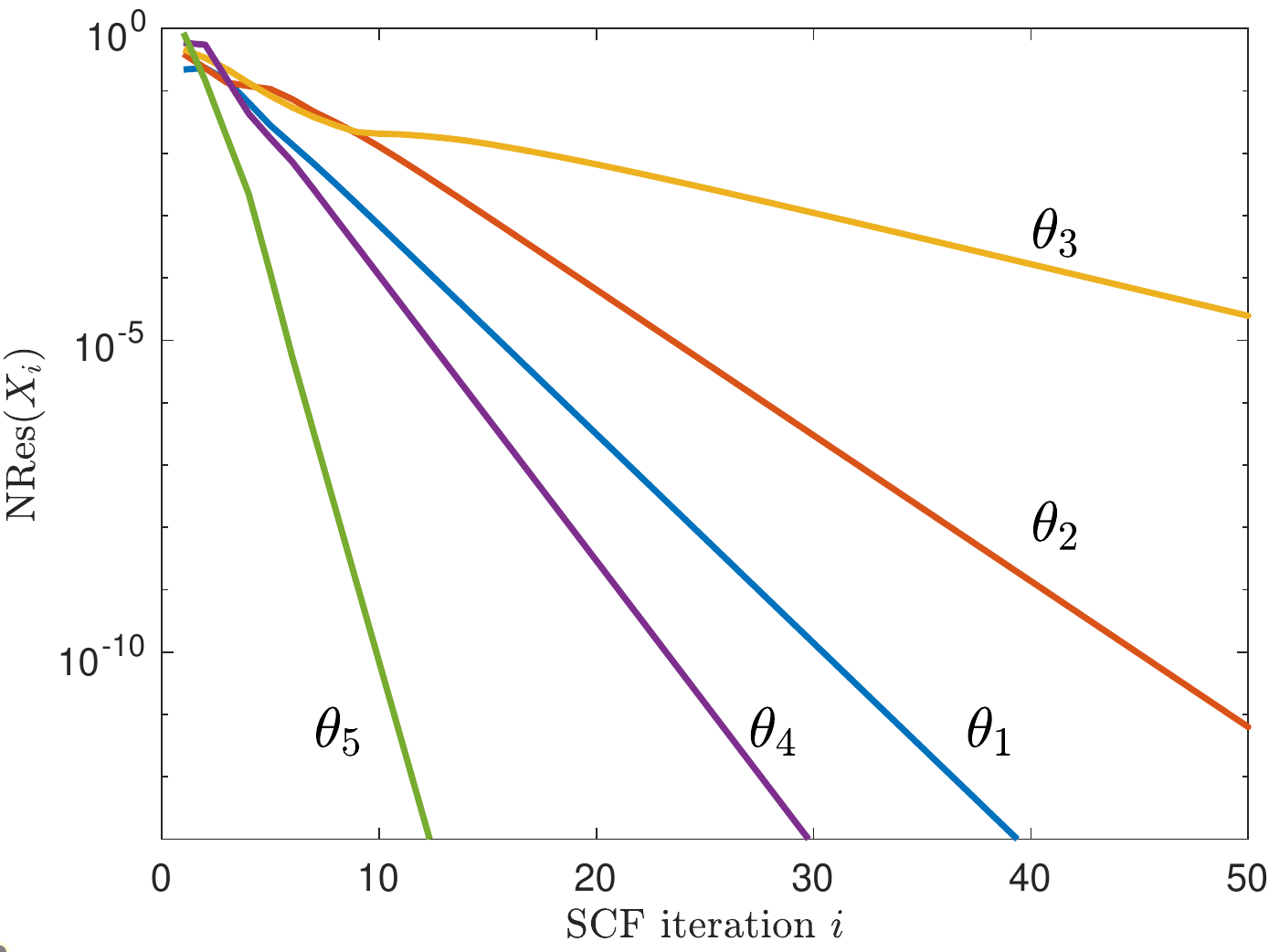}
\includegraphics[width=0.45\textwidth]{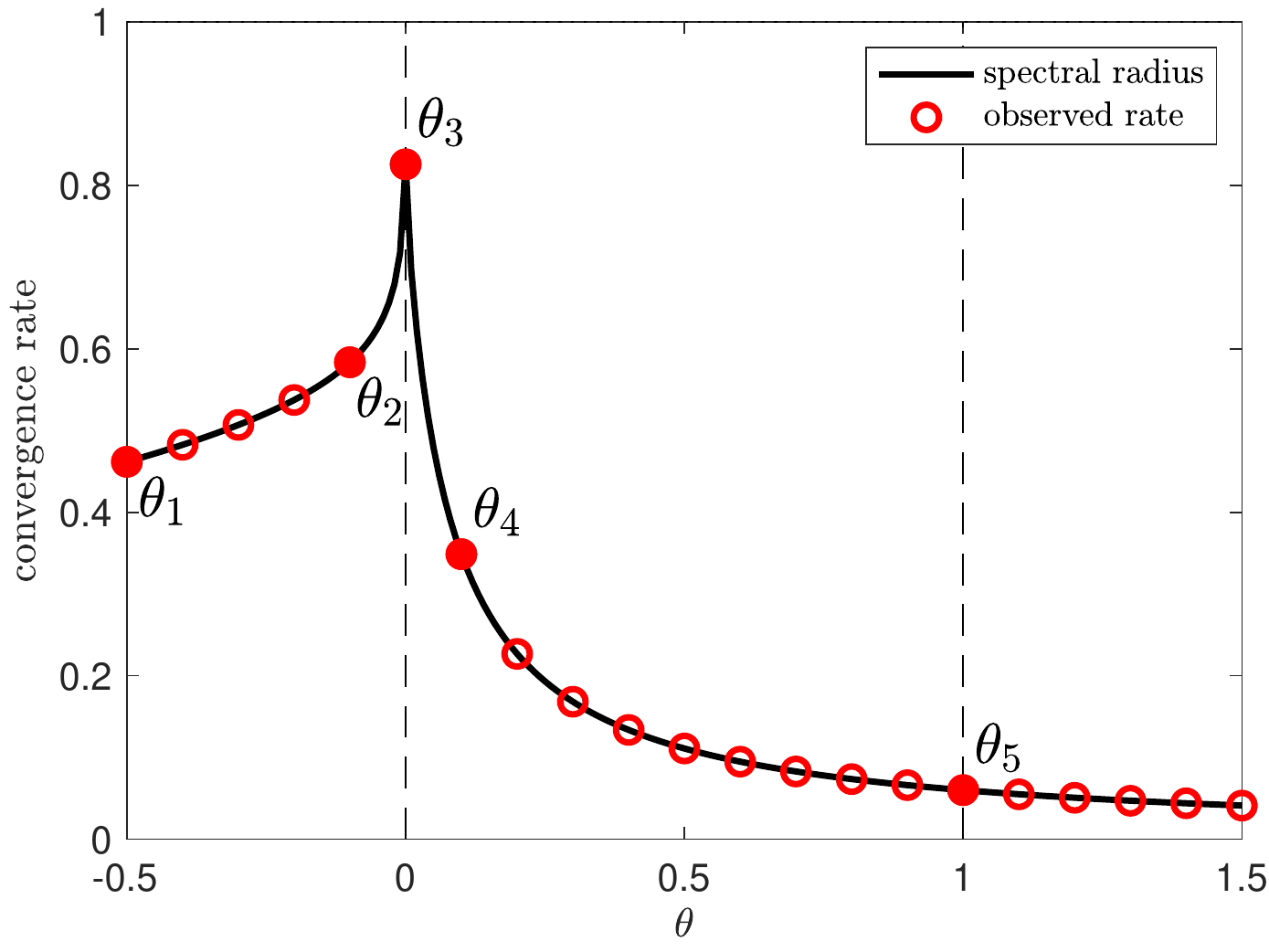}
\caption{
	\Cref{alg:scf} on Example~\ref{ex:1'}.
	{\em Left:\/} The iterative history for solving NEPv~\eqref{eq:nepva'} at a few sampled
	$\theta$ (correspondingly marked as $\bullet$ on the right plot).
	{\em Right:\/} The curve of spectral radius $\rho(\scrL)$ as a function of
	parameter $\theta\in[-0.5,1.5]$ (based on $200$ equally spaced $\theta$),
	and the observed rates of convergence (marked by $\bullet$ and $\circ$) at a number of values of $\theta$, including those
    sampled $\theta$ on the left plot.
}\label{fig:ex1':rndk1}
\end{center}
\end{figure}

\begin{figure}[th]
\begin{center}
\includegraphics[width=0.44\textwidth]{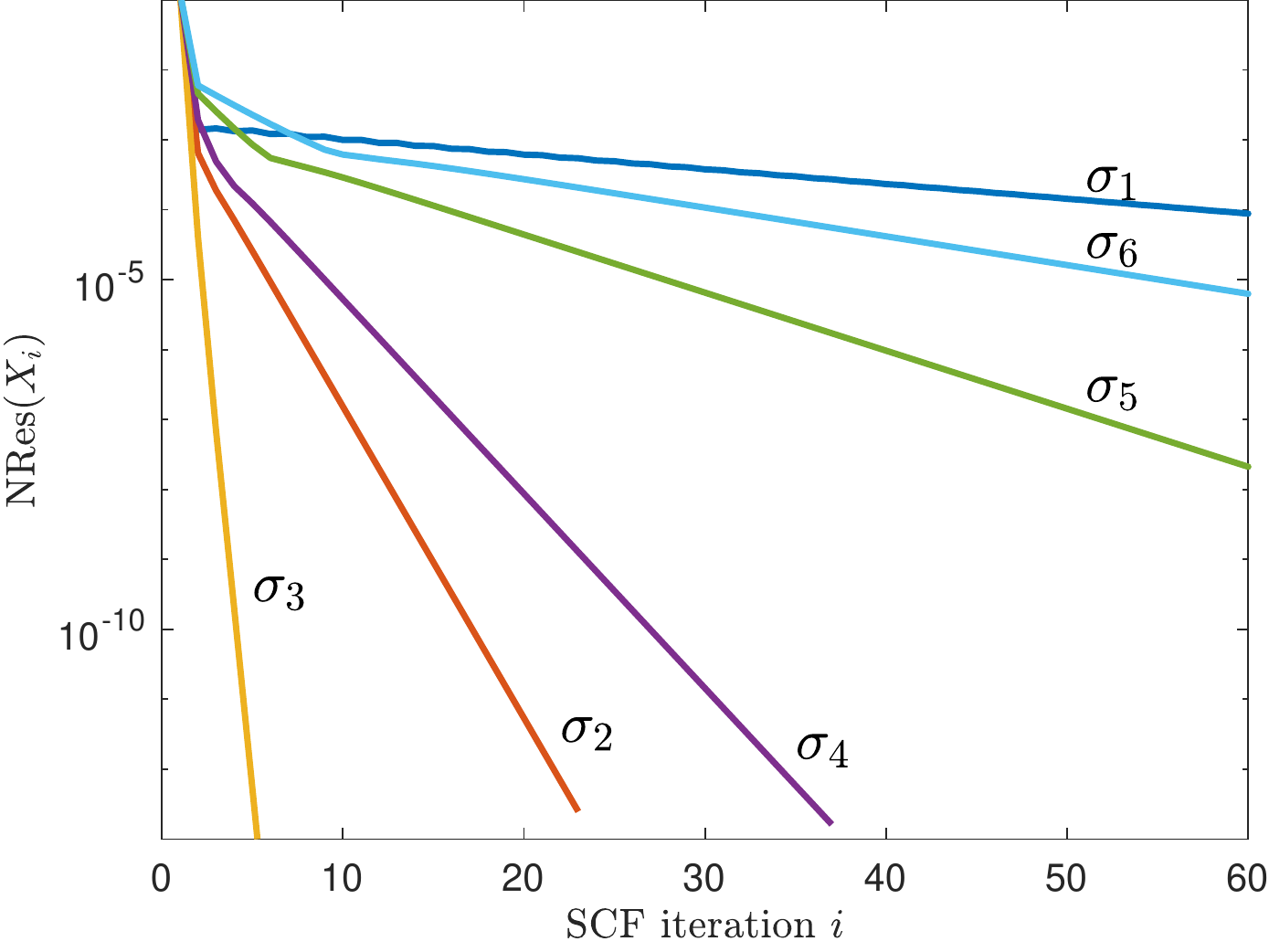}
\includegraphics[width=0.45\textwidth]{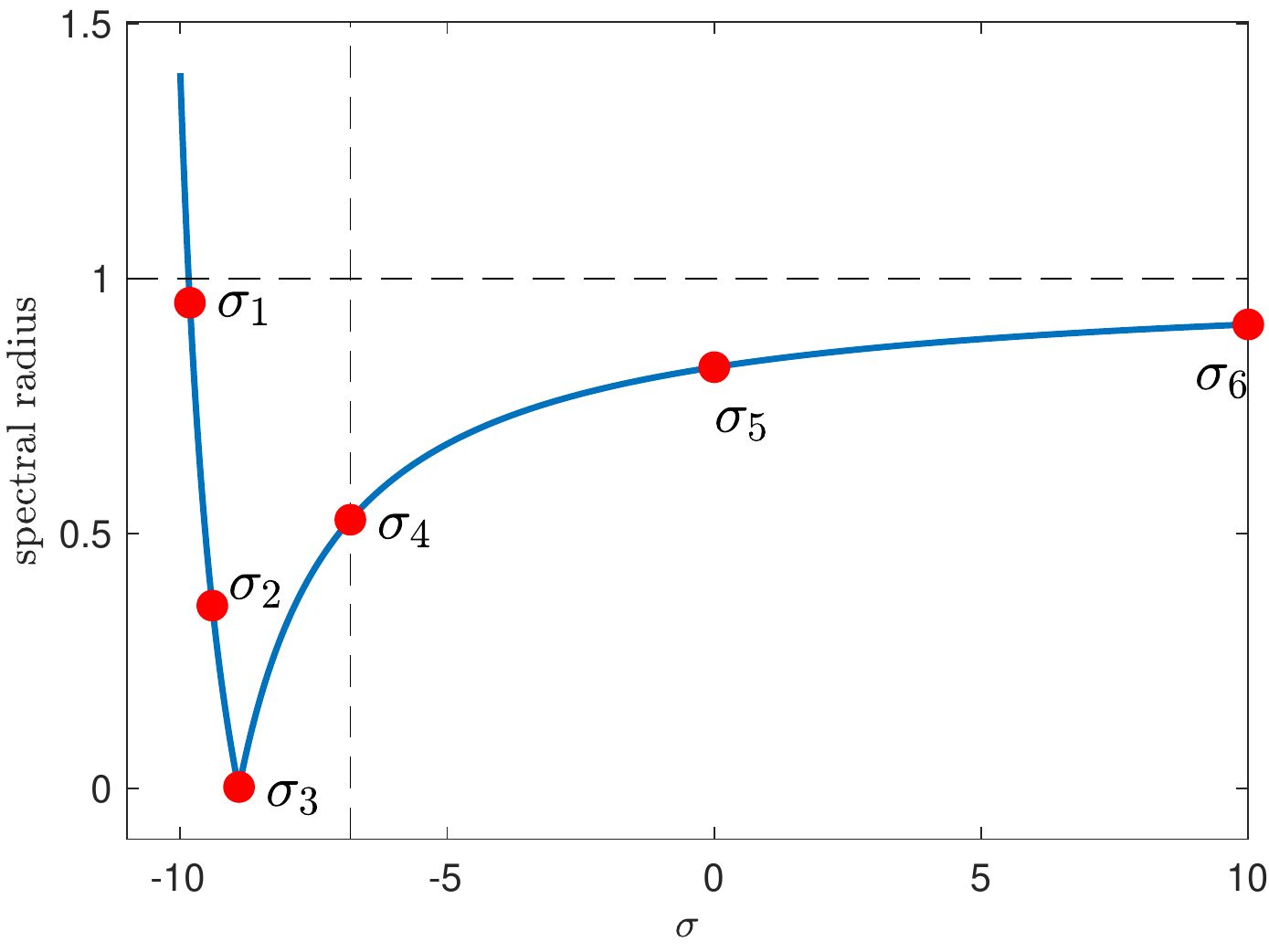}
\caption{
	\Cref{alg:LS-scf} on
	Example~\ref{ex:1'} with $\theta = 0$ (i.e., $\alpha_4$ in Figure~\ref{fig:ex1':rndk1}, at which \Cref{alg:scf} converges slowest).
	{\em Left:\/} The iterative history of \Cref{alg:LS-scf} (level-shifted SCF)
	with a few sampled level-shifts $\sigma$ (correspondingly marked as $\bullet$ on the right plot).
	{\em Right:\/} The curve of spectral radius $\rho(\scrL_\sigma)$ as a function of
	level-shift $\sigma$ and the observed rates of convergence at the sampled
	level-shifts $\sigma$. 
	The vertical dashed line corresponds to the theoretical lower bound
	$\sigma_L=-6.81$ by~\eqref{eq:sigma} (as $\sigma_4$ in the plots).  
	Negative shifts can accelerate SCF for the example.
}\label{fig:ex1':rndk1ls}
\end{center}
\end{figure}

\begin{example}\label{ex:1'}
{\rm %
Consider the random coefficient matrices $A,B\in\bbR^{n\times n}$ and
$D\in\bbR^{n\times k}$ used in~\Cref{ex:1},  where $n=3$ and $k=1$.
We first examine the convergence of~\Cref{alg:scf} on NEPv
\eqref{eq:nepva'} as $\theta$ varies in $[-0.5,1.5]$.
In~\Cref{fig:ex1':rndk1}, we see that~\Cref{alg:scf} is convergent for all
$\theta\in[0,1]$, consistent with the  global convergence
analysis in~\cite{wazl:2022}.
We can also see that the spectral radius
 captures the local convergence rate of the algorithm.
For example,
\[
	\mbox{at}\,\,\theta_4 = 0.1:\qquad
	\text{observed rate} \approx 0.348738\cdots,
	\quad
	\rho(\scrL) \approx 0.348739\cdots.
\]
For this  example, \Cref{alg:scf} runs faster as $\theta$ increases from $0$.
But such a rapid convergence is not guaranteed
for general NEPv~\eqref{eq:nepva'},
especially as $\theta > 1$; see~\Cref{ex:5} below.

Next, we consider the level-shifted SCF (\Cref{alg:LS-scf})
for NEPv~\eqref{eq:nepva'} at  $\theta=0$,
at which the curve of spectral radius in~\Cref{fig:ex1':rndk1} peaks.
Recall that the NEPv with $\theta=0$ arises in solving
the unbalanced orthogonal Procrustes problems; see, e.g.,~\cite{Zhang:2020}.
Since~\Cref{alg:scf} is globally convergent in this case,
there is no need to use level-shifting to fix the divergence issue of SCF.
Nevertheless, the level-shift still helps to speed up the convergence of the algorithm.
From the curve of spectral radius in \Cref{fig:ex1':rndk1ls}, the optimal
level-shift is achieved at
$\sigma_*\approx -8.91$ (as $\sigma_3$ in \Cref{fig:ex1':rndk1ls}),
at which
$\rho(\scrL_\sigma) \approx 0.7\times 10^{-3}$,
indicating significant acceleration to the algorithm.
The theoretical lower bound~\eqref{eq:sigma} predicts that
the level-shifted SCF is locally convergent for any $\sigma\gtrsim\sigma_L=-6.81$ (as $\sigma_4$ in \Cref{fig:ex1':rndk1ls}), overestimating the observed one.

An interesting observation here is that the optimal level-shift is a \emph{negative} number.
A negative $\sigma$ would reduce 
the gap between the $k$th and $(k+1)$-st eigenvalues (see~\eqref{eq:eigdgsx}), and
it is remarkable that here a negative shift can even greatly accelerate SCF.
The negative level-shift for SCF has been briefly mentioned
in~\cite{Bai:2022}, but its benefits have not been fully understood.
\Cref{fig:ex1':rndk1ls} provides a concrete example to demonstrate
this intriguing  behavior of negative level-shifting.

} %

\end{example}

\begin{figure}[thbp]
\begin{center}
\includegraphics[width=0.45\textwidth]{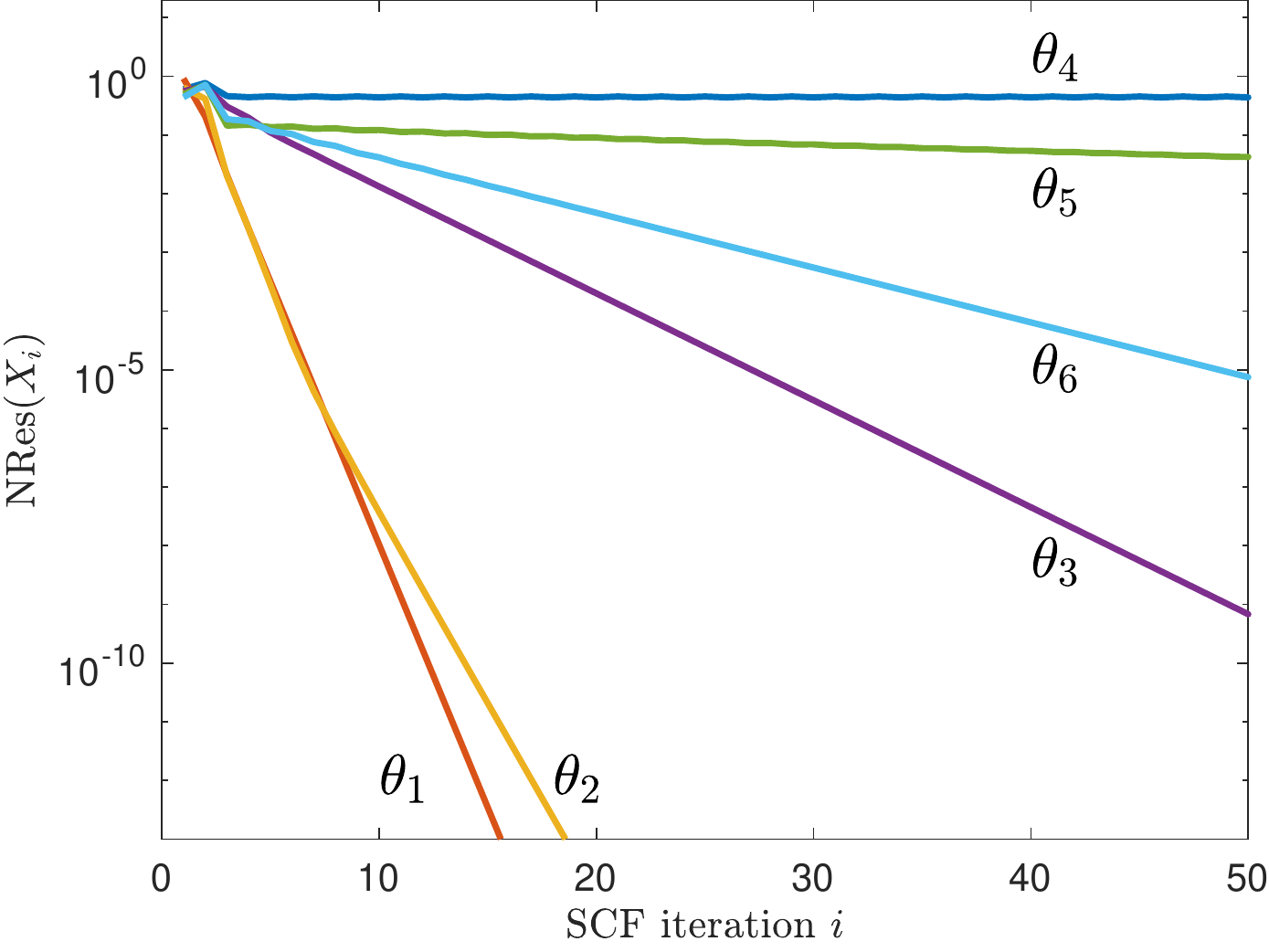}
\includegraphics[width=0.45\textwidth]{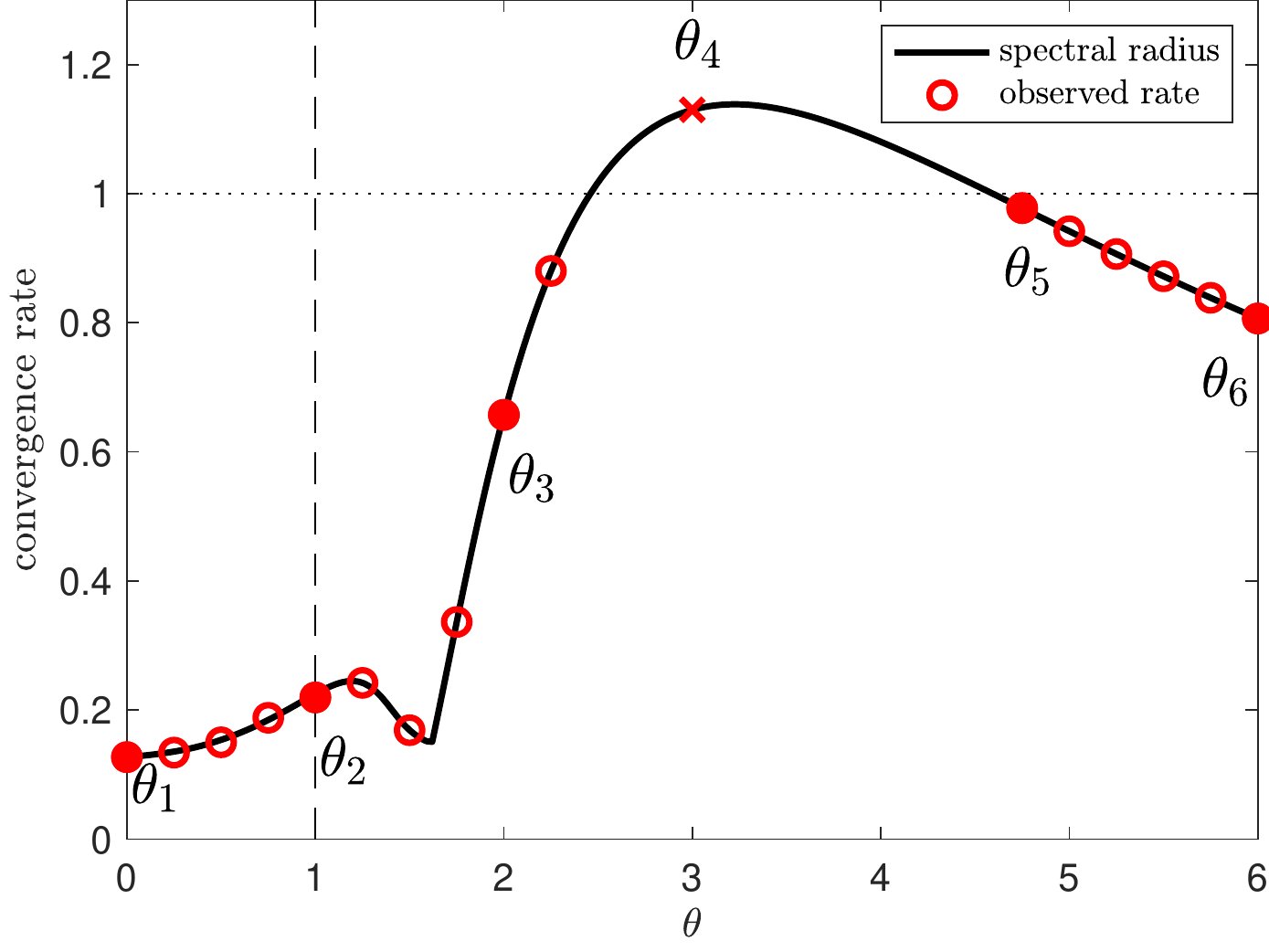}
\caption{
	\Cref{alg:scf} on Example~\ref{ex:5}.
	{\em Left:\/} The iterative history for solving NEPv~\eqref{eq:nepva'} at a few
	sampled $\theta$ (correspondingly marked as as $\bullet$ and $\times$ on the right plot).
	{\em Right:\/} the curve of spectral radius $\rho(\scrL)$
	as a function of parameter $\theta\in[0,6]$ (based on $200$ equally
	spaced $\theta$),
	and the observed rates of convergence (marked by $\bullet$ and $\circ$) at a number of values of
	$\theta$, including those
    sampled $\theta$ on the left plot,
	and `$\times$' indicates that SCF is divergent.
}\label{fig:rndk2'}
\end{center}
\end{figure}

\begin{figure}[thbp]
\begin{center}
\includegraphics[width=0.45\textwidth]{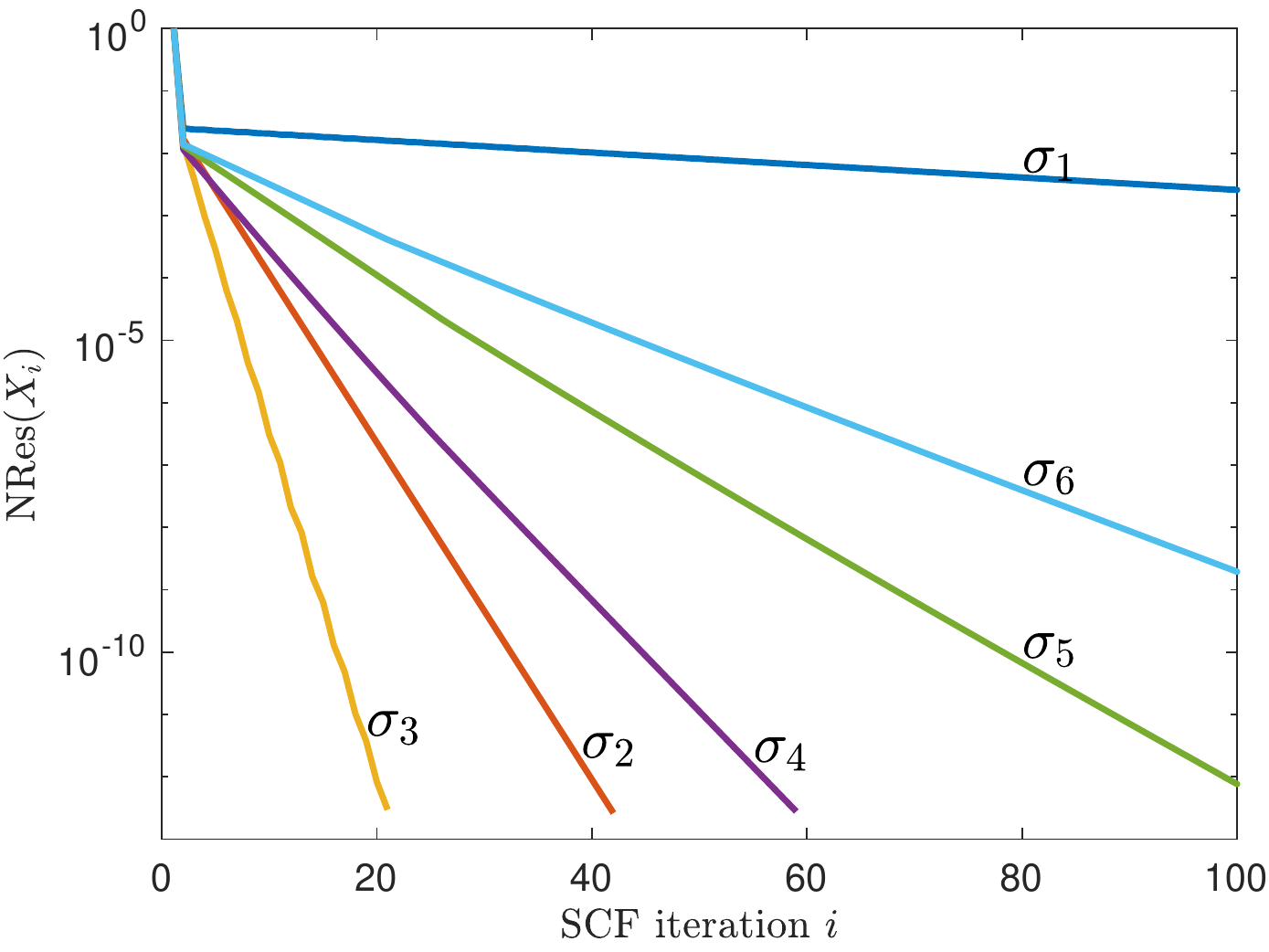}
\includegraphics[width=0.46\textwidth]{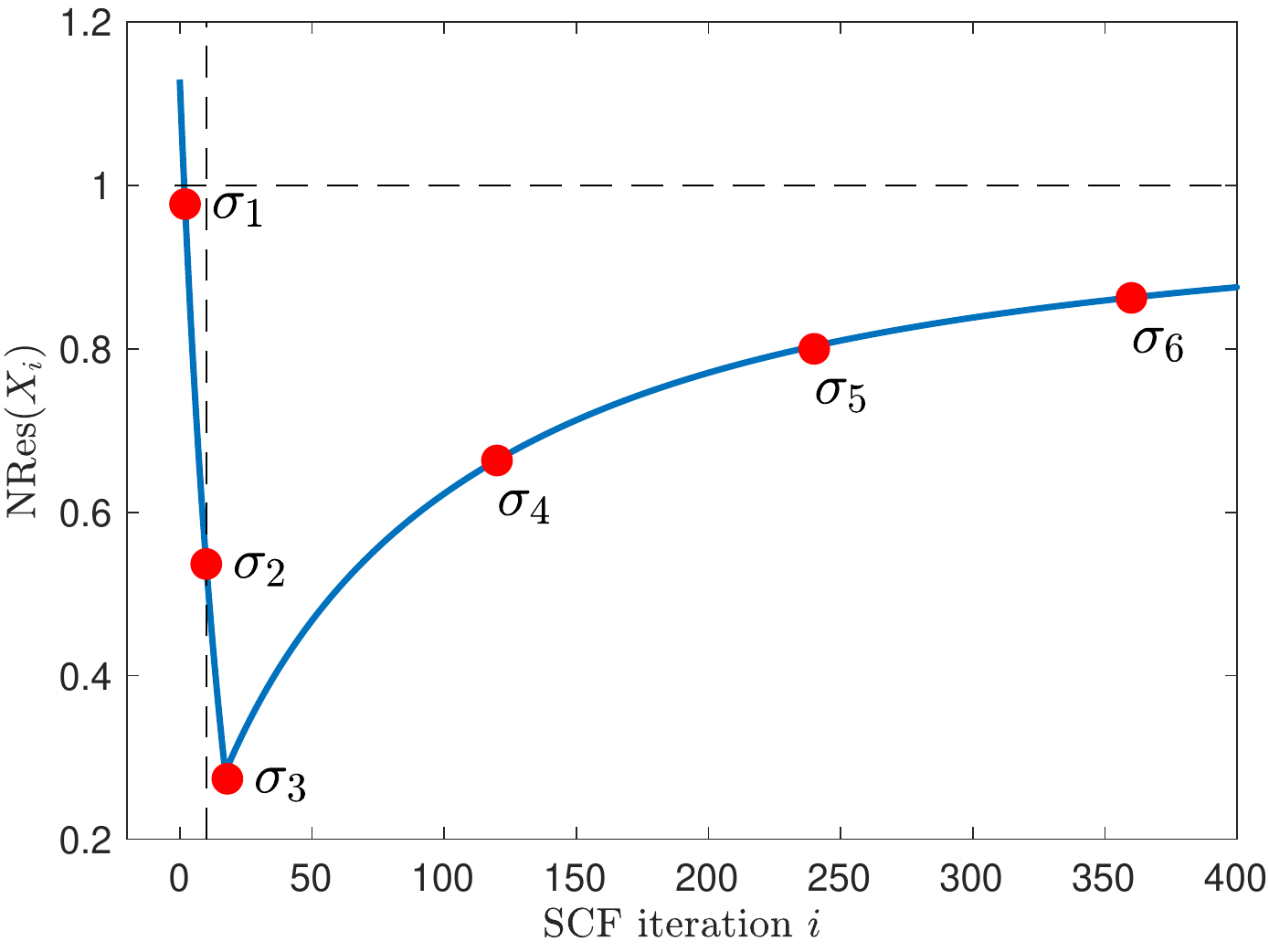}
\caption{
	\Cref{alg:LS-scf} on
	Example~\ref{ex:5} with $\theta = 3.0$ (i.e., $\theta_4$ in Figure~\ref{fig:rndk2'}, at which \Cref{alg:scf} diverges).
	{\em Left:\/} The iterative history of \Cref{alg:LS-scf} (level-shifted SCF)
	with a few sampled level-shifts $\sigma$ (correspondingly marked as $\bullet$ on the right plot).
	{\em Right:\/} the curve of spectral radius $\rho(\scrL_\sigma)$ as a function of
	level-shift $\sigma$ and the observed rates of convergence at the sampled
	level-shifts $\sigma$. 
	The vertical dashed line corresponds to the theoretical lower bound
	$\sigma_L=10.02$ given by~\eqref{eq:sigma} (as $\sigma_2$ in  the plots).
}\label{fig:rndk2ls}
\end{center}
\end{figure}

\begin{example}\label{ex:5}
{\rm
This example demonstrates the potential divergence issue
of~\Cref{alg:scf} for NEPv~\eqref{eq:nepva'}
with $\theta > 1$.
We consider the following randomly generated matrices
\[
A =
\left[\begin{array}{rrr}
   	1.145&   -0.095&    0.514\\
   -0.095&    0.838&    1.022\\
    0.514&    1.022&   -1.223
\end{array}\right]
,\
B =
\left[\begin{array}{rrr}
    0.582&   -0.037&    0.025\\
   -0.037&    0.183&    0.043\\
    0.025&    0.043&    0.239
\end{array}\right]
,\
D =
\left[\begin{array}{rr}
	0.760&    0.258\\
	0.011&    0.774\\
	0.180&    0.520
\end{array}\right].
\]
\Cref{fig:rndk2'} reports
the convergence of~\Cref{alg:scf} on NEPv~\eqref{eq:nepva'} as $\theta$
varies in $[0,6]$.
From the curve of spectral radius, it can be seen that \Cref{alg:scf} converges
for all $\theta\in[0,1]$,
and the spectral radius captures very well the observed convergence
rates, e.g.,
\[
	\mbox{at}\,\,\theta_5 = 4.75:\qquad
	\text{observed rate} \approx 0.977615\cdots,
	\quad
	\rho(\scrL) \approx 0.977613\cdots.
\]
The curve of spectral radius indicates that~\Cref{alg:scf}
fails to converge for $\theta$ approximately in the interval $[2.46, 4.59]$.
For example, at $\theta = 3.0$,
the normalized residual $\mbox{NRes}(X_i)$ by~\Cref{alg:scf}
 oscillate between two close numbers
$0.453\cdots$ and $0.437\cdots$ after about $20$ iterations.

To find the solution of the NEPv when $\rho(\scrL) > 1$, we
apply the level-shifted SCF (\Cref{alg:LS-scf}) with $\sigma = 40$.
The effectiveness of level-shifting is also demonstrated in~\Cref{fig:rndk2ls}
for solving  NEPv \eqref{eq:nepva'} with $\theta=3.0$ (i.e., $\theta_4$ in Figure~\ref{fig:rndk2'}), at which \Cref{alg:scf} diverges.
The optimal level-shift is
$\sigma_* \approx 17.21$ (as $\sigma_3$ in the plots) with $\rho(\mathcal L_\sigma) \approx 0.282$.
The theoretical lower bound of the level-shift $\sigma$
by~\eqref{eq:sigma} is $\sigma_L=10.02$ (as $\sigma_2$ in the plots).
}
\end{example}

\begin{figure}[thbp]
\begin{center}
\includegraphics[width=0.45\textwidth]{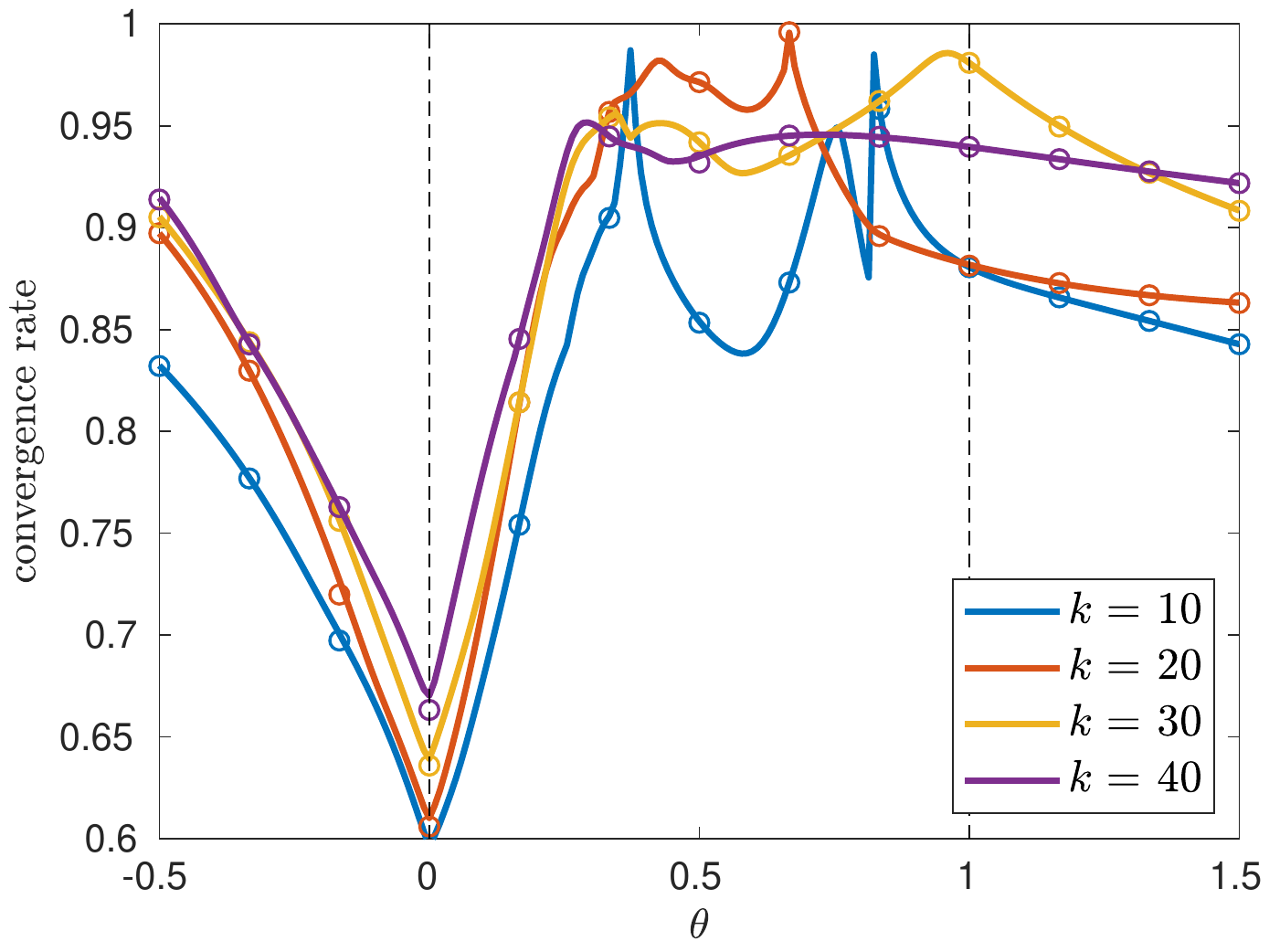}
\includegraphics[width=0.45\textwidth]{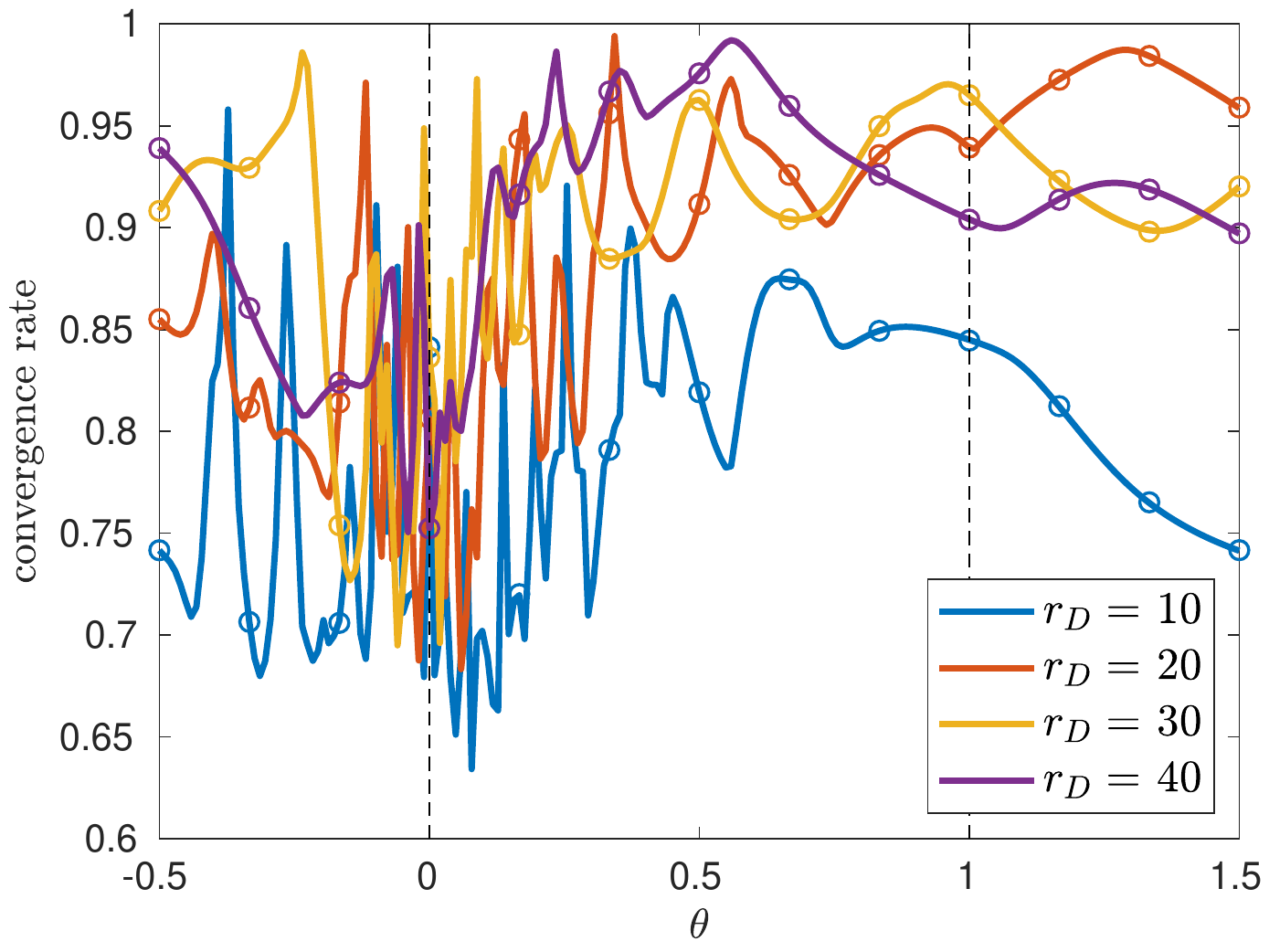}
\caption{\Cref{alg:scf} on Example~\ref{ex:6}:
	The curves of spectral radius $\rho(\scrL)$ as a function of
	$\theta\in[-0.5,1.5]$ (based on $200$ equally spaced $\theta$)
	and the observed rates of convergence of~\Cref{alg:scf} at a few sampled $\theta$
	(marked by $\circ$).
	{\em Left:\/} Full-rank $D\in \bbR^{n\times k}$.
	{\em Right:\/} Rank-deficient $D\in \bbR^{n\times k}$ with $k=50$ and rank $\rkd:=\mrank(D)<k$.
}\label{fig:rank'}
\end{center}
\end{figure}

\begin{example}\label{ex:6}
{\rm %
This example examines the convergence
of~\Cref{alg:scf} on NEPv~\eqref{eq:nepva'} for more general cases of
$D\in\bbR^{n\times k}$, as in the previous~\Cref{ex:3}.
We use the same testing matrices as in~\Cref{ex:3}.
Recall that a set of four full-rank $D\in\bbR^{n\times k}$, with $k=10,20,30,40$, and
the other set of four rank-deficient $D\in\bbR^{n\times 50}$ with
$\rkd:=\mrank(D)=
10,20,30,40$, are tested.
For each set of testing matrices we apply~\Cref{alg:scf} on NEPv~\eqref{eq:nepva'}
with $\theta\in[-0.5,1.5]$ and generate the curves of spectral radius
$\rho(\scrL)$ as a function in~$\theta$ as shown in~\Cref{fig:rank'}.
It is observed that the spectral radius remains less than $1$ in all
testing cases, and thus \Cref{alg:scf} is locally convergent.
The curves of spectral radius demonstrate very different patterns
for the full-rank and rank-deficient cases:
They appear 
to be more sensitive to  $\theta$ for a rank-deficient $D$.
Despite their wild oscillations,
the curves of spectral radius  still match very well with our theoretical convergence
rate at all sampled $\theta$,
which further confirms our convergence analysis in~\Cref{sec:local}.
}
\end{example}

\section{Concluding Remarks}\label{sec:conclusion}
We investigated a class of NEPv~\eqref{eq:nepv2} as
arising from solving optimization problem~\eqref{eq:gopt}
on the Stiefel manifold:
\begin{equation}\tag{\ref{eq:gopt}}
	\max_{X\in\bbO^{n\times k}} f(X)
 	\qquad\text{with}\qquad f(X):=\phi(X) + \psi(X)\cdot \tr(X^{\T}D),
\end{equation}
whose objective function $f(\,\cdot\,)$
is not invariant upon substitution
$X\leftarrow XQ$ with $Q\in\bbO^{k\times k}$.
Consequently, the resulting NEPv does not have
the unitary invariance property,
unlike those commonly studied as in \cite{Bai:2022}.
We have shown that any global optimizer $X_*$ of~\eqref{eq:gopt} is a \dproper~eigenbasis matrix of
NEPv~\eqref{eq:nepv2}, i.e., satisfying
$$
X_*^{\T}D\succeq 0 \quad\mbox{and}\quad \mrank(X_*^{\T}D)=\mrank(D),
$$
and that for any $X$ that has orthonormal columns and is close to $X_*$,
the NEPv can be reformulated to another NEPv, called the aligned NEPv,
that is unitarily invariant.
This novel reformulation essentially reduces
the local convergence analysis of the SCF-type
iteration in~\Cref{alg:scf} for NEPv~\eqref{eq:nepv2}
to the case that had been studied in \cite{Bai:2022}
for general unitarily invariant NEPv, once some technicalities are taken care of.
We established closed-form local convergence rates for~\Cref{alg:scf,alg:LS-scf} and
built a theoretical foundation for the application of a level-shifting scheme.
Our theoretical analysis has been confirmed by extensive numerical experiments.

Throughout this paper, our presentation is restricted to the real number field $\bbR$.
This restriction is more for simplicity and clarity than
the capability of our techniques to deal with a problem similar
to \eqref{eq:gopt} but in the complex number field $\bbC$.
In fact, our approach can be extended to handle,
more generally,
the following optimization problem on the complex Stiefel manifold:
\begin{equation}\label{eq:optgeneral'}
	\max_{X\in\bbC^{n\times k},\, X^{\HH}X=I_k} f(X)
	\qquad\text{with}\qquad f(X):=\phi(X) + \psi(X)\cdot
	\eta(\mbox{Re}(\,\tr(X^{\HH}D)\,)),
\end{equation}
where the superscript {\sc h} takes the complex conjugate transpose of
a matrix, 
$\mbox{Re}(\,\cdot\,)$ takes the real part of a complex number,
$\phi$ and $\psi$ are real-valued functions in $X\in\bbC^{n\times k}$,
continuously differentiable with respect to the real and imaginary parts of
$X$ and satisfying the unitary invariance property:
$$
\phi(XQ)=\phi(X), \quad
\psi(XQ)=\psi(X)\quad
\mbox{for $X\in\bbC^{n\times k},\, Q\in\bbC^{k\times k}$ such that $Q^{\HH}Q=I_k$},
$$
$\psi(\,\cdot\,)>0$ is a positive function,
and $\eta:\bbR\to \bbR$ is a differentiable and monotonically increasing
function, i.e., $\eta'(t)>0$ for all $t\in\bbR$.
Due to that $X$ is a complex matrix variable, we will need to use Wirtinger
derivatives~\cite{Brandwood:1983, Kreutz:2005}
to establish the KKT condition of \eqref{eq:optgeneral'} and its
corresponding NEPv.
On the other hand, because of the monotonicity in~$\eta$,
we can still apply the same alignment operation as discussed
in~\Cref{sec:basis}
and establish analogous necessary conditions for the global maximizers
of~\eqref{eq:optgeneral'} as in~\Cref{thm:global}.
A complete treatment is left to future work.

\appendix

\section{Differentiability and Polar Decomposition}
Recall  definition \eqref{eq:FD-defn} for the Fr\'echet derivative
of a (Fr\'echet) differentiable function
$F:\bbR^{m\times n}\to \bbR^{p\times q}$ at $X$ along direction $Y$.
In this section, we derive the expressions of the Fr\'echet derivatives
of the polar factors of a matrix having full column rank.

The polar decomposition of a matrix $Z\in\bbR^{m\times p}$
having full column rank, i.e., $\mrank(Z)=p$, is given by
\begin{subequations}\label{eq:PD-fullrank}
\begin{equation}\label{eq:PD-fullrank-1}
	Z = Q\,M,
\end{equation}
where $Q\in\bbO^{m\times p}$ is the orthonormal polar factor and
$M\in\bbR^{p\times p}$ 
is the symmetric polar factor that is also positive definite.
The polar factors are unique \cite{Higham:2008,li:2014HLA} and
differentiable with respect to the matrix variable $Z$. In fact, they
  can be expressed explicitly as
\begin{equation}\label{eq:mq0}
M= \left(Z^{\T}Z\right)^{1/2}
\qquad\text{and}\qquad
Q= Z\,\left(Z^{\T}Z\right)^{-1/2},
\end{equation}
\end{subequations}
where $(\,\cdot\,)^{1/2}$ is  the positive semi-definite square root
of a positive semi-definite matrix.
The Fr\'echet derivatives of $M$ and $Q$ with respect to $Z$ are detailed in
\Cref{lem:polard} below.
These results are not new, but we will provide a quick derivation for self-containedness.

\begin{lemma}\label{lem:polard}
	Let $Z\in\bbR^{m\times p}$ have full column rank.
	Its polar factors $M$ and $Q$, as given in~\eqref{eq:mq0},
	are Fr\'echet differentiable in $Z$ along direction $Y\in\bbR^{m\times p}$,
	with their derivatives given  by
	\begin{equation}\label{eq:polard}
			\D\!M(Z)[Y] = L
			\qquad\text{and}\qquad
			\D Q(Z)[Y] = \left(\,Y - Q L\,\right)
			\cdot M^{-1},
	\end{equation}
	where $L\in\bbR^{m\times p}$ is the solution to the Lyapunov equation
	\begin{equation}\label{eq:sylvester}
		M\cdot L + L\cdot M =  (Y^{\T}Z + Z^{\T}Y).
	\end{equation}
\end{lemma}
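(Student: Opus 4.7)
The plan is to differentiate the defining identity $M^2 = Z^{\T}Z$ of the symmetric polar factor and then to obtain the derivative of the orthonormal factor from the product rule applied to $Q = Z M^{-1}$.

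First I would establish the differentiability of $M$ in $Z$. Since $Z$ has full column rank, $Z^{\T}Z$ is symmetric positive definite, and the matrix square root map is real-analytic on the open cone of symmetric positive definite matrices (this can be seen, for example, via the holomorphic functional calculus, via the spectral decomposition together with the smoothness of simple eigenprojections, or by the implicit function theorem applied to $F(M,S) := M^2 - S$, noting that the partial derivative in $M$ at a positive definite $M$ is the Lyapunov operator $L\mapsto ML + LM$, which is invertible). Composing the smooth map $Z\mapsto Z^{\T}Z$ with the square root then yields that $M(Z) = (Z^{\T}Z)^{1/2}$ is Fr\'echet differentiable on the open set of full-column-rank matrices.

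With differentiability secured, I would apply the product rule to both sides of $M(Z)^2 = Z^{\T}Z$ along direction $Y$ to get
\[
\D M(Z)[Y]\cdot M + M\cdot \D M(Z)[Y] = Y^{\T}Z + Z^{\T}Y.
\]
Setting $L := \D M(Z)[Y]$, this is precisely the Lyapunov equation~\eqref{eq:sylvester}. Because $M\succ 0$, no two eigenvalues of $M$ sum to zero, so the Lyapunov operator $L\mapsto ML+LM$ is nonsingular; hence $L$ is uniquely determined by~\eqref{eq:sylvester}, and its symmetry is automatic since the right-hand side is symmetric and the operator preserves symmetry. This proves the first formula in~\eqref{eq:polard}.

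Next, from $Q(Z) = Z\,M(Z)^{-1}$ and the standard identity $\D(M^{-1})(Z)[Y] = -M^{-1}\,\D M(Z)[Y]\,M^{-1}$, the product rule gives
\[
\D Q(Z)[Y] = Y M^{-1} - Z M^{-1}\,L\,M^{-1} = (Y - QL)\,M^{-1},
\]
where I used $Z M^{-1} = Q$. This is the second formula in~\eqref{eq:polard}. The only real obstacle is the justification of the smoothness of $M$ in $Z$; once that is in hand both formulas drop out of the product and chain rules. An alternative streamlined route is to bypass the square-root functional calculus and invoke the implicit function theorem directly on $F(Z,M) := M^2 - Z^{\T}Z$, since its partial derivative in $M$ is the invertible Lyapunov operator at every $M\succ 0$; this simultaneously yields local smoothness of $M(Z)$ and the explicit formula for $L$.
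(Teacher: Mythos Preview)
Your proof is correct and follows essentially the same approach as the paper: differentiate the identity $M^2 = Z^{\T}Z$ to obtain the Lyapunov equation for $L=\D M(Z)[Y]$, then differentiate $Q = ZM^{-1}$ via the product rule. The paper phrases the computation as a first-order perturbation expansion in $t$ rather than invoking the product rule directly, and gives less detail on the differentiability of the square root, but the argument is the same.
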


\begin{proof}
	The differentiability of the factors in~\eqref{eq:mq0}
	follows from the
	differentiability of the matrix square root function $(\,\cdot\,)^{1/2}$
	\cite[Chapter~3]{Higham:2008}.
	To obtain closed-form formulas for the derivatives,
	let $M+\delta M(t) = \left[(Z+t Y)^{\T}(Z+t Y)\right]^{1/2}$
	where $t\in\bbR$
	is assumed sufficiently small.
	Then $(M + \delta M(t))^2 = (Z+t Y)^{\T}(Z+t Y)$,
	expanding which to get
	\[
		\delta M(t)\cdot M + M\cdot \delta M(t)  = t(Z^{\T}Y + Y^{\T}Z)
                + \mathcal O(t^2).
	\]
	Hence, $\delta M(t) = t L + \mathcal O(t^2) $, yielding the first formula in \eqref{eq:polard}.
	Now, it follows from $Q+\delta Q(t) = (Z+t Y)(M+\delta M(t))^{-1}$ that
	\begin{align*}
		\delta Q(t) &=
		t YM^{-1} - Z M^{-1}\cdot\delta M(t)\cdot M^{-1} + \mathcal O(t^2) \\
		&= t (YM^{-1} - Q\cdot L \cdot M^{-1}) + \mathcal O(t^2),
 \end{align*}
	yielding the second formula in \eqref{eq:polard}.
\end{proof}

\section{Aligned NEPv via Bi-level Maximization}\label{sec:bimax}
We will show that the aligned NEPv~\eqref{eq:alnepv}
can also be derived from the KKT condition of another maximization problem that
is equivalent to~\eqref{eq:gopt},
as we mentioned at the end of~\Cref{sec:aligned}.
This approach provides a seemingly more direct
derivation of the aligned NEPv~\eqref{eq:alnepv} than what is presented in~\Cref{sec:aligned},
but it does not seem to yield
\rankd~condition \eqref{cond:rank} at optimality,
a cornerstone of our local convergence analysis
of the SCF-type iteration in \Cref{alg:scf} for solving
NEPv~\eqref{eq:nepv2}, whose rationale comes from
analyzing the formulation of NEPv~\eqref{eq:nepv2}.

We can state
optimization~\eqref{eq:gopt} equivalently as
\begin{equation}\label{eq:twolevel0}
	\max_{ X\in \bbO^{n\times k}, Q\in\bbO^{k\times k}}
		\phi(XQ) + \psi( XQ)\cdot \tr(Q^{\T} X^{\T}D),
\end{equation}
in the sense that any maximizer of one optimization problem
will lead to a maximizer of the other.
Due to that $\phi$ and $\psi$ are unitary invariant,
we can drop $Q$ from the argument of both $\phi(XQ)$ and $\psi(XQ)$,
and write~\eqref{eq:twolevel0} as a \emph{bi-level optimization}:
\begin{equation}\label{eq:twolevel}
	\max_{X\in \bbO^{n\times k}}
	\left[ \phi(X) + \psi(X)\cdot
		\left(\max_{Q\in\bbO^{k\times k}} \tr(Q^{\T} X^{\T}D)\right)\right],
\end{equation}
where we have used $\psi(X) >0$.
Recall that the inner optimization has been considered in~\Cref{sec:basis},
and its solution can be given in terms of the polar decomposition~\eqref{eq:polar}.
Consequently, maximization problem~\eqref{eq:twolevel} is reduced to
\begin{equation}\label{eq:gopt2}
	\max_{X\in\bbO^{n\times k}} g(X)
	\qquad\text{with}\qquad
	g(X) := \phi(X) + \psi(X)\cdot \tr(M),
\end{equation}
where $M\succeq 0 $ is the positive semidefinite polar factor of
$X^{\T}D = QM$ in~\eqref{eq:polar}.
The two maximization problems \eqref{eq:gopt2} and \eqref{eq:gopt} are
clearly equivalent: $X_*$ is a global maximizer of~\eqref{eq:gopt2}
if and only if any $\widetilde X_*\in\al{X_*}$ is a global maximizer of  original~\eqref{eq:gopt}.

To derive the first-order optimality condition of \eqref{eq:gopt2},
we define its Lagrangian function as
\begin{equation}\label{eq:laggx}
	\cL(X,\Gamma) = \phi(X) + \psi(X)\cdot \tr(M)
         - \frac{1}{2} \tr\big(\Gamma^{\T}[X^{\T}X - I_k]\big),
\end{equation}
where the symmetric $\Gamma\in\bbR^{k\times k}$ is the matrix
of multipliers.
Recall that $M$ depends on $X$.
Under the \rankd~condition $\mrank(X^{\T}D) = \mrank(D)$,
$M\equiv M(X)$ is differentiable with respect to $X$ by \Cref{lem:polar}.
Then the function $\gamma(X):=\tr(M)$ has its derivative along direction $E$
given by
\[
	\D \gamma(X)[E] = \tr\left(\D\!M(X)[E]\right) = \tr\left(Q_{\bf o}D^{\T}E\right)
	\qquad\Rightarrow\qquad
	\frac {\partial \gamma(X)}{\partial X} = D Q_{\bf o}^{\T}.
\]
Hence,
\begin{align}
    \frac {\partial \cL(X,\Gamma)}{\partial X}
	&=
	H_{\phi}(X)X + \tr(M)\cdot H_{\psi}(X)X +\psi(X)\cdot DQ_{\bf o}^{\T}
		- X\Gamma \label{eq:partiall} \\
	& = G(X)X -   X\cdot \Lambda(X), \notag
\end{align}
where
$ \Lambda(X) =\psi(X) Q_{\bf o} D^{\T} X + \Gamma$, and we have used~\eqref{eq:der}
for the derivatives of $\phi$ and $\psi$,
and the definition of $G(X)$ in~\eqref{eq:g2x}.
Finally, the first-order optimality condition
$\partial \cL(X,\Gamma)/\partial X =0$ leads
to the aligned NEPv~\eqref{eq:alnepv}.


%

\section{Proof of~\Cref{thm:qdef}}\label{sec:proofqdef}

We exploit the fact that the global maximizer $X_*$
of~\eqref{eq:gopt} is also a global maximizer of~\eqref{eq:gopt2}.
Let $\cL$ be the Lagrangian function for the constrained
optimization problem~\eqref{eq:gopt2}, as in~\eqref{eq:laggx}.
For $X,E\in\bbR^{n\times k}$, expand the Lagrangian function up to the second order to get
\[
\cL(X+t E,\Gamma)
= \cL(X,\Gamma) + t\D_1 \cL(X,\Gamma)[E] + t^2\frac{1}{2}  \D_1^2\cL(X,\Gamma)[E,E] +\mathcal O(t^3)
\]
for $t\in\bbR$ sufficiently small,
where $\D_1\cL(\cdot,\cdot)[\cdot]$ stands for partial differentiation with respect to the first matrix argument
of $\cL$, and $\D_1^2\cL(X,\Gamma)(\cdot,\cdot)$,
a bilinear form, is the partial Hessian operator of $\cL$
with respect to the first matrix argument.
By the standard second-order optimality
condition~\cite[Theorem~12.5]{Nocedal:2006},
any global maximizer $X_*$ of~\eqref{eq:gopt2} must satisfy
\begin{equation}\label{eq:hess1}
\D_1^2\cL(X_*,\Gamma_*)[E,E] \leq 0 \qquad \text{for all $E\in\bbR^{n\times k}$ with $E^{\T}X_*=0$},
\end{equation}
where $\Gamma_*$ contains the optimal multipliers associated with $X_*$.
On the other hand, we claim that
\begin{equation}\label{eq:hessid}
	 \D_1^2\cL(X_*,\Gamma_*)[E,E] = \tr(\,Z^{\T}\cQ(Z)\,)
	\qquad\text{for $E=X_{*\bot}Z$},
\end{equation}
where $\cQ(Z)$ is given as in \eqref{eq:qop}.
To justify~\eqref{eq:hessid},
we find, by the gradient of $\cL$ in~\eqref{eq:partiall}, that
\begin{align*}
	\D_1 \cL(X,\Gamma)[E] = \tr\Big(E^{\T} G(X)X -E^{\T}X\cdot \Lambda (X) \Big),
\end{align*}
where $ \Lambda(X) =\psi(X) Q_{\bf o} D^{\T} X + \Gamma$.
Differentiate it with respect to $X$ to obtain
\begin{align}
	\D_1^2 \cL(X,\Gamma)[E,E] &= \D_1 \Big(\D_1 \cL(X,\Gamma)[E]\Big)[E] \nonumber\\
				   &= \tr\Big(E^{\T} \cdot \D_1 G(X)[E]\cdot X
         + E^{\T}G(X)E -E^{\T}X \cdot \D \Lambda (X)[E] - E^{\T}E \cdot
	 \Lambda(X) \Big). \nonumber
\end{align}
Noticing for $X=X_*$ and $E=X_{*\bot}Z$, we have
\[
	E^{\T}X = 0,\qquad E^{\T}E = Z^{\T}Z, \qquad E^{\T}G(X_*)E = Z^{\T}\Lambda_{*\bot}Z,
\]
where the last equation follows from~\eqref{eq:eigdgsx},
and by the first-order optimality condition $G(X_*)X_* = X_*\cdot\Lambda(X_*)$
we have $\Lambda(X_*) = X_*^{\T}G(X_*)X_* = \Lambda_*$.
Therefore, 
\begin{align*}
	\D_1^2 \cL(X_*,\Gamma_*)[E,E] =
	\tr\Big(Z^{\T}X_{*\bot}^{\T} \cdot \D G(X_*)[X_{*\bot}Z]\cdot X_*+
	Z^{\T}\Lambda_{*\bot}Z - Z^{\T}Z \Lambda_* \Big)
	= \tr(Z^{\T}\cQ(Z)),
\end{align*}
which is~\eqref{eq:hessid}.
Finally,~\eqref{eq:qdef} is a consequence of~\eqref{eq:hess1}~and~\eqref{eq:hessid}.

\section*{Acknowledgment}
The authors wish to thank the anonymous referees for their insightful suggestions and comments that improved the presentation of the paper.

\bibliographystyle{plain}
\bibliography{ref,refLi}

\end{document}